\documentclass[oneside]{compositio}

\usepackage[theoremfont]{newtxtext}
\usepackage[varvw,smallerops]{newtxmath}

\usepackage{verbatim}

\usepackage{tikz-cd} 

\usepackage{array}
\newcolumntype{P}[1]{>{\raggedright\arraybackslash}p{#1}}
\usepackage{tabularx}
\usepackage{float}

\usepackage{hyperref}
\hypersetup{
    colorlinks=false, 
    linktoc=all,     
    linkcolor=black,  
}

\usepackage{placeins}

\newtheorem{thm}{Theorem}[section]
\newtheorem{cor}[thm]{Corollary}
\newtheorem{prop}[thm]{Proposition}
\newtheorem{lem}[thm]{Lemma}

\theoremstyle{definition}
\newtheorem{defn}[thm]{Definition}

\theoremstyle{remark}
\newtheorem{exmp}[thm]{Example}
\newtheorem{rem}[thm]{Remark}

\newcommand{\Q}{\mathbb{Q}} 
\newcommand{\C}{\mathbb{C}} 
\renewcommand{\P}{\mathbb{P}}
\newcommand{\Z}{\mathbb{Z}}
\newcommand{\R}{\mathcal{R}}

\newcommand{\E}{\mathcal{E}}
\newcommand{\Oo}{\mathcal{O}}
\renewcommand{\L}{\mathbb{L}}
\newcommand{\Ll}{\mathcal{L}}
\newcommand{\M}{\mathcal{M}}
\newcommand{\oM}{{\mkern5mu\overline{\mkern-5mu\M\mkern-1.5mu}\mkern2mu}}

\DeclareMathOperator{\Bl}{Bl}
\DeclareMathOperator{\Spec}{Spec}
\DeclareMathOperator{\Sm}{\mathbf{Sm}}
\DeclareMathOperator{\Rings}{\mathbf{Rings}}

\begin{document}

\title{Cobordism-valued intersection theory on $\oM_{0,n}$}

\author{Benjamin Ellis-Bloor}

\email{benjaminellis-bloor2026@u.northwestern.edu}

\address{Department of Mathematics, Northwestern University, Evanston,
  IL 60208}

\classification{Primary: 14H10. Secondary: 14C17, 55N22}

\begin{abstract}
We calculate the genus zero cobordism-valued Gromov-Witten invariants of a point by refining the string equation on $\oM_{0,n}$ from the Chow ring to algebraic cobordism. This gives inductive formulas for cobordism-valued psi-class intersections on $\oM_{0,n}$, and in particular the cobordism classes $[\oM_{0,n}]$, and for their images in $K$-theory. Explicit formulas are given up to $n = 8$.
\end{abstract}

\maketitle

\section{Introduction}

The idea of refining Gromov-Witten invariants from rational cohomology (or the rational Chow groups) to instead take values in complex cobordism goes back to Kontsevich \cite[Section 1.5]{Kontsevich}. More recently, Abouzaid and Bai \cite{AbouzaidBai} have constructed complex cobordism valued Gromov-Witten invariants for general symplectic manifolds. Earlier work of Coates and Givental \cite{CoatesGivental} proposed a definition of Gromov-Witten invariants valued in rational complex cobordism. 

This paper focuses on calculations of these invariants (in the integral case) in the simplest
possible situation, where the target is a point and the genus is zero. That
is, our aim is to calculate the cobordism class
$[\oM_{0,n}]$ of the moduli space of stable genus zero curves with $n$ marked points (here $n \ge 3$), as an element of the cobordism ring. We will primarily work in the context of
algebraic cobordism $\Omega^{*}$ constructed by Levine-Morel \cite{AlgCob}
and Levine-Pandharipande \cite{LevinePandharipande}, which is the analogue of complex
cobordism $\text{MU}^{*}$ for smooth algebraic varieties.

Algebraic cobordism is defined over any field $k$ of characteristic
zero, which we assume in this paper to be algebraically closed. Both
$\text{MU}^{*}$ and $\Omega^{*}$ are governed by the universal formal group
law, and both $\text{MU}^{*}(\text{pt})$ and $\Omega^{*}(\Spec k)$ are
isomorphic to the Lazard ring $\L^{*} \cong \Z[u_{1},u_{2},u_{3},...]$
classifying one-dimensional, commutative formal group laws, where in
algebraic cobordism $u_{i}$ has degree $-i$, and in complex cobordism
$u_{i}$ has degree $-2i$. In particular, when working over $k = \C$,
there is a realization map $\Omega^{*} \to \text{MU}^{2*}$ which is an
isomorphism when applied to a point, and hence our results also hold
in the setting of complex cobordism.

We now formulate our main theorem. In calculating the cobordism classes $[\oM_{0,n}]$, we are forced to understand more general classes in the cobordism ring. Namely, for
$X$ a smooth variety over $k$, let $p \colon X \to \Spec k$ be
the projection. For $\alpha \in \Omega^{*}(X)$, let
$\int_{X}\alpha := p_{*}(\alpha)$ where $p_{*}$ is the pushforward map induced by
$p$, so in particular
\begin{equation*}
  [\oM_{0,n}] = \int_{\oM_{0,n}} 1 \in \L^{*}.
\end{equation*}
For each $1 \le i \le n$, let $L_{i}$ be the tautological line bundle on
$\oM_{0,n}$ corresponding to the $i^{\text{th}}$ marked
point, and let
$\psi_{i} = c_{1}(L_{i}) \in \Omega^{1}(\oM_{0,n})$ be its first Chern
class, called a \textit{psi-class}. For integers $d_{1},\dots,d_{n} \ge 0$, define
\begin{equation*}
  \psi^d = \psi_1^{d_1} \dotsm \psi_n^{d_n} ,
\end{equation*}
and for $1\le i\le n$, define
\begin{equation*}
  \psi^d/\psi_i =
  \begin{cases}
    \psi_1^{d_1} \dotsm \psi_i^{d_i-1} \dotsm \psi_n^{d_n} , & d_i>0 , \\
    0 , & d_i=0 .
  \end{cases}
\end{equation*}
Let $|d|=d_1+\cdots+d_n$. Our main theorem deals with calculating pushforwards of the form
$\int_{\oM_{0,n}} \psi^d$, which we call \textit{psi-class
  intersections}.

For $T$ a subset of
$\{1,2,\dots,n\}$ with $\vert{T}\vert \ge 2$, $\vert{T^c}\vert \ge 2$ and
$\vert{T \cap \{1,2,3\}}\vert \le 1$, let
\begin{equation*}
i_{T} \colon D^{T} \to \oM_{0,n}
\end{equation*}
be the inclusion of the divisor
$D^{T} = \oM_{0,\vert{T}\vert+1} \times \oM_{0,\vert{T^c}\vert+1}$ whose
generic points are curves with a single node, and the two components
containing markings labeled by the elements of $T$ and $T^{c}$. We denote by $\bullet$ the node-branch marking in
$\oM_{0,\vert{T}\vert+1}$ (or
$\oM_{0,\vert{T^c}\vert+1}$), and write
$L_{a} = \text{pr}_{1}^{*}L_{\bullet}$ and
$L_{b} = \text{pr}_{2}^{*}L_{\bullet}$, where
$\text{pr}_{1} \colon D^{T} \to \oM_{0,\vert{T}\vert+1}$ and
$\text{pr}_{2} \colon D^{T} \to \oM_{0,\vert{T^c}\vert+1}$ are the two
projections. We then define $\psi_{a} = c_{1}(L_{a})$ and
$\psi_{b} = c_{1}(L_{b})$. 

Let $x+_{\Omega}y \in \L^{*}[[x,y]]$ denote the universal
formal group law, and $\bar x$ its inverse, i.e. the unique power series in $\L^{*}[[x]]$ satisfying $x+_{\Omega}\bar x = 0$. We will be working with the power series
\begin{equation*}
\frac{1}{x(\bar x+_{\Omega}\bar y)}-\frac{1}{(\bar x +_{\Omega} \bar y)\bar y}+\frac{1}{\bar x \bar y} = \sum_{i,j \ge 0} b_{ij}x^{i}y^{j},
\end{equation*}
where explicitly, in low degrees,
\begin{equation*}
  b_{00} = [\P^{1}]^{2}-[\P^{2}], \hspace{0.5cm} b_{10} =
  [\Bl_{\text{pt}}\P^{3}]-[\P^{1}][\P^{2}], \hspace{0.5cm}
  b_{01} = 2([\Bl_{\text{pt}}\P^{3}]-[\P^{1}][\P^{2}]).
\end{equation*}
Here $\Bl_{\text{pt}}\P^{3}$ is the blow-up of $\P^{3}$ at a
point. Write
$x/\bar x = \sum_{j \ge 0} c_{j}x^{j}$, where in low degrees
$c_{0} = -1$, $c_{1} = [\P^{1}]$, and $c_{2} = 0$.

\begin{thm} \label{MainTheorem}
  The psi-class intersections are uniquely determined by the recursion
  \begin{multline*}
    \int_{\oM_{0,n+1}} 1\cdot\psi^d \\
    \begin{aligned}
    &= [\P^{1}] \int_{\oM_{0,n}} \psi^d
      - \sum_{i=1}^{n}\int_{\oM_{0,n}} \frac{\psi_{i}}{\bar\psi_{i}} \psi^d/\psi_i
      + \sum_{T}\int_{D^{T}} \biggl(
      \frac{1}{\psi_{a}\bigl(\bar\psi_{a}+_{\Omega}\bar\psi_{b}\bigr)}
     - \frac{1}{\bigl( \bar\psi_{a}+_{\Omega}\bar\psi_{b} \bigr) \bar\psi_{b}} + \frac{1}{\bar\psi_a\bar\psi_b}
      \biggr) i_{T}^{*} \psi^d \\
    &= [\P^{1}] \int_{\oM_{0,n}}  \psi^d -\sum_{i=1}^{n} \sum_{j \ge 0} \int_{\oM_{0,n}}
       c_{j}\psi_{i}^{j} \psi^d/\psi_i + \sum_{i,j \ge 0} \sum_{T}
       b_{ij} \int_{\oM_{0,\vert{T}\vert+1}} \psi_{\bullet}^{i} \prod_{\ell \in T}\psi_{\ell}^{d_\ell}
      \int_{\oM_{0,\vert{T^c}\vert+1}} \psi_{\bullet}^{j} \prod_{\ell \in T^{c}} \psi_{\ell}^{d_\ell},
    \end{aligned}
  \end{multline*}
  together with the initial condition $\int_{\oM_{0,3}} 1 = 1$.
\end{thm}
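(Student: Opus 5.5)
We work with the forgetful map $\pi\colon \oM_{0,n+1}\to\oM_{0,n}$ that forgets the last marking, and compute $\pi_*(\pi^*\text{-type classes})$ in algebraic cobordism. The approach has four steps.

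\textbf{Step 1: a comparison formula for psi-classes.} Write $\tilde\psi_i$ for the psi-classes on $\oM_{0,n+1}$ and $D_{i,n+1}$ for the boundary divisor where $i$ and $n+1$ bubble off together. There is the standard isomorphism of line bundles
\[ L_i \cong \pi^*L_i \otimes \Oo(D_{i,n+1}). \]
In cobordism this gives $\tilde\psi_i = \pi^*\psi_i +_{\Omega} [D_{i,n+1}]$, with no truncation to first order. Since the $D_{i,n+1}$ are pairwise disjoint and $\pi^*L_i$ restricts trivially to $D_{i,n+1}$, we get
\[ \tilde\psi^d = \pi^*\psi^d + \sum_i (\text{correction supported on } D_{i,n+1}). \]
Each $D_{i,n+1}$ is a section $\sigma_i$ of $\pi$, isomorphic to $\oM_{0,n}$, with normal bundle $\sigma_i^*\Oo(D_{i,n+1}) = L_i^{\vee}$. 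Hence $c_1$ of the normal bundle is $\bar\psi_i$.

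\textbf{Step 2: the section terms.} For each $i$, push the correction term forward along $\sigma_i$. Using $\pi\circ\sigma_i=\mathrm{id}$, the self-intersection formula, and the formal group law identity
\[ \frac{F(x+_\Omega \bar{x})-F(\ldots)}{\ldots} \]
(a divided difference in the $i$-th variable evaluated against $c_1(N)=\bar\psi_i$), we should obtain
\[ -\int_{\oM_{0,n}} \frac{\psi_i}{\bar\psi_i}\,\psi^d/\psi_i . \]
This can be checked against the Chow limit $\psi_i/\bar\psi_i\to -1$, which recovers the usual string equation.

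\textbf{Step 3: the main term.} Compute $\int_{\oM_{0,n+1}}\pi^*\psi^d = \int_{\oM_{0,n}} \psi^d\cdot\pi_*(1)$. In cobordism $\pi_*1$ is not just a number: $\pi$ is a family of nodal conics, with smooth fibres $\P^1$ and singular fibres over the boundary divisors $D^T$ (here $n+1$ is the node point). I would compute $\pi_*1$ by writing
\[ \pi_*1 = [\P^1] + \sum_T (i_T)_*(\text{class}) , \]
determining the class on $D^T$ by a local analysis. Near $D^T$, $\oM_{0,n+1}$ is locally $\{xy=t\}$, where the branches have tangent lines $L_a^\vee, L_b^\vee$. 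The fibre of $\pi$ over a point of $D^T$ is the union of two $\P^1$'s whose normal data is governed by $\psi_a,\psi_b$.

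One route is a double point degeneration (Levine–Pandharipande): the family of fibres over a normal slice to $D^T$ relates $[\P^1]$ to the two components $\P(\Oo\oplus L_a^\vee)$ and $\P(\Oo\oplus L_b^\vee)$, minus the projective bundle over the intersection. Computing these pushforwards via the projective bundle formula yields $1/\psi_a(\bar\psi_a+_\Omega\bar\psi_b)$-type expressions. Since $D^T$ may intersect other boundary divisors, the cleanest approach is to apply the same local analysis on a resolution of the normal-crossing boundary, or to use that $\pi$ is the universal curve and apply a Grothendieck–Riemann–Roch-type formula for cobordism pushforward along nodal families. Then restricting $\psi^d$ to $D^T$ gives $i_T^*\psi^d$, which factors via $D^T\cong \oM_{0,|T|+1}\times\oM_{0,|T^c|+1}$. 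Expanding in $b_{ij}$ gives the second displayed formula. The condition $|T\cap\{1,2,3\}|\le1$ needs to be reconciled with the count of divisors; I would fix this by choosing, for each unordered splitting, the side not containing two of $1,2,3$.

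\textbf{Step 4: uniqueness.} Uniqueness follows by induction on $n$: every term on the right involves only moduli spaces with fewer markings, and the base case is $\int_{\oM_{0,3}}1=1$. Since $\dim\oM_{0,n}=n-3$, only finitely many terms survive.

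\textbf{Main obstacle.} The hard part is Step 3, namely identifying the exact correction class on $D^T$ in $\pi_*1$. Getting the asymmetric expression
\[ \frac{1}{\psi_a(\bar\psi_a+_\Omega\bar\psi_b)} - \frac{1}{(\bar\psi_a+_\Omega\bar\psi_b)\bar\psi_b} + \frac{1}{\bar\psi_a\bar\psi_b} \]
requires careful tracking of which normal bundle is $L^\vee$ versus $L$. It also requires showing that the result is a genuine power series, i.e.\ that the poles cancel. I would first verify the formula for $n=4$ with $d=0$, where $\oM_{0,5}$ is $\P^2$ blown up at four points, as a check.
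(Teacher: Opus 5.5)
Your Step 3 is the heart of the theorem, and it has a genuine gap: you never determine $\pi_*(1)$, and the routes you sketch would not produce it.

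\begin{itemize}
\item A double point relation is a global relation in $\Omega^*(X)$. It comes from a projective $Y\to X\times\P^1$ whose fibre over $0$ is a transverse union of two smooth divisors. The map $\pi$ is not of this form, and a ``normal slice'' to $D^T$ gives no relation in $\Omega^*(\oM_{0,n})$. You also do not supply a cobordism GRR for nodal families.
\item Your local model is off. The two components of $\pi^{-1}(D^T)$ are the boundary divisors $\oM_{0,|T|+2}\times\oM_{0,|T^c|+1}$ and $\oM_{0,|T|+1}\times\oM_{0,|T^c|+2}$ of $\oM_{0,n+1}$. These are universal curves of the factors. In general they are not $\P^1$-bundles over $D^T$, and they degenerate further over deeper strata.
\item More fundamentally, any recipe that reads off the class on $D^T$ from the geometry of $\pi$ near $D^T$ (for instance a universal expression in $\psi_a,\psi_b$) treats all $S_n$-translates of $D^T$ alike. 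No representation $\pi_*(1)=[\P^1]+\sum_T(i_T)_*\alpha_T$ of that kind exists. For $n=5$ the ten boundary divisors form one $S_5$-orbit, so each would contribute the same element of $\L^{-3}$ and the total would be divisible by $10$. But $[\oM_{0,6}]-[\P^1][\oM_{0,5}]=27u_1^3-27u_1u_2+17u_3$.
\end{itemize}

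So the $a/b$ asymmetry is not a matter of tracking $L$ versus $L^\vee$. It has to come from a global choice that singles out the markings $1,2,3$.

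That global choice is the missing idea, and the paper supplies it as follows.
\begin{itemize}
\item It factors $(\pi,\pi_{1,2,3,n+1})\colon\oM_{0,n+1}\to\oM_{0,n}\times\P^1$ (with $\P^1=\oM_{0,4}$) as Keel's tower of blow-ups along disjoint smooth codimension-two centers $S_k^T\cong D^T$.
\item It proves $N_{D^T/B_k}=(L_a^\vee\otimes L_b^\vee)\oplus L_b^\vee$. To do so it writes $S_k^T=P_k^T\cap\Sigma_k^i$ and traces the strict transforms back to $B_1$. It then identifies the $\Sigma$-summand with $L_b^\vee$ via Lemma~\ref{TautDescription} on the $\oM_{0,|T^c|+1}$ factor, using reference markings from $T^c\cap\{1,2,3\}$. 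This is exactly where the asymmetry and the condition $|T\cap\{1,2,3\}|\le1$ come from.
\item Each blow-up is handled by the global relation $[\Bl_ZX\to X]=1-[\P(N\oplus\Oo)\to X]+[\P_{\P(N)}(\Oo(1)\oplus\Oo)\to X]$. This is a genuine double point relation, coming from deformation to the normal cone.
\item Quillen's formula with roots $\bar\psi_a+_\Omega\bar\psi_b$, $\bar\psi_b$, $0$ evaluates the two bundle terms. The $[\P^1]$ comes from pushing forward along $\oM_{0,n}\times\P^1\to\oM_{0,n}$.
\end{itemize}

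Your other steps are essentially right, with small fixes.
\begin{itemize}
\item In Step 1, $\pi^*L_i$ is not trivial on $D_{i,n+1}$; it restricts to $L_i$. What is trivial there is the psi-bundle of $\oM_{0,n+1}$. The paper uses this to get $\tilde\psi_iD_i=0$, where $D_i=c_1(\Oo(D_{i,n+1}))$.
\item You do not actually need that triviality. Disjointness alone gives your decomposition. Since $D_i^\ell=\sigma_{i*}(\bar\psi_i^{\ell-1})$, your divided-difference idea yields $\pi_*\bigl((\pi^*\psi_i+_\Omega D_i)^{d_i}-(\pi^*\psi_i)^{d_i}\bigr)=\bigl((\psi_i+_\Omega\bar\psi_i)^{d_i}-\psi_i^{d_i}\bigr)/\bar\psi_i=-\psi_i^{d_i}/\bar\psi_i$, read as a formal divided difference. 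This is a slightly slicker route than the paper's.
\item In Step 4, add that dimension vanishing forces some $d_i=0$ whenever $\psi^d\neq0$. After relabeling, the recursion then applies.
\end{itemize}
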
 

Observe that if $|d| = 0$, then we get the equation
\begin{equation*}
[\oM_{0,n+1}] = [\P^{1}][\oM_{0,n}]+\sum_{i,j \ge 0}\sum_{T}b_{ij}\int_{\oM_{0,\vert{T}\vert+1}} \psi_{\bullet}^{i}\int_{\oM_{0,\vert{T^c}\vert+1}} \psi_{\bullet}^{j},
\end{equation*}
where $[\oM_{0,3}] = 1$. In particular, in order to calculate
the cobordism classes $[\oM_{0,n}] \in \L^{*}$, we are forced
to calculate the more general psi-class intersections.

Theorem \ref{MainTheorem} is a generalization to cobordism of the genus zero 
string equation, introduced by Witten \cite{Witten} for
rational cohomology (or the rational Chow groups) in arbitrary genus
$g$. On $\oM_{g,n}$, the psi-class intersections in
rational cohomology are governed by the string equation together with
the KdV hierarchy, known as Witten's conjecture, proved by
Kontsevich. However, in genus zero the psi-class intersections are
determined only by the string equation, which is implied by Theorem
\ref{MainTheorem} since algebraic (or complex) cobordism is the
universal (complex) oriented cohomology theory.

More precisely, we have an orientation map $\Omega^{*} \to \text{CH}^{*}$ to the
Chow ring. We also have an orientation map
$\Omega^{*} \to K^{0}[\beta,\beta^{-1}]$ to $K$-theory, where $\beta$ is a formal
variable of degree $-1$, and
$K^{0}(\Spec k)[\beta,\beta^{-1}] = \Z[\beta,\beta^{-1}]$. By mapping
Theorem \ref{MainTheorem} to the Chow ring and $K$-theory by
these orientation maps, we get known explicit closed formulas for
their respective genus zero psi-class intersections, due to work by
Givental and Lee on Quantum $K$-theory (see in particular \cite{Lee}
and \cite{Lee2}).

In the case of $K$-theory, we obtain a closed formula by defining
certain twists of the psi-classes, namely we define
$\Psi_{i} = [\Ll_{i}]\psi_{i}$, where $[\Ll_{i}]$ is the class of the
tautological line bundle $L_{i}$ in the $K$-theory of $\oM_{0,n}$.
\begin{exmp} \label{ClosedFormulas} The psi-class intersections in the
  Chow ring and $K$-theory are given by
  \begin{equation*}
    \int_{\oM_{0,n}} \psi^d =
    \binom{n-3}{d_{1},\dots,d_{n}} \quad \text{and} \quad
    \int_{\oM_{0,n}} \Psi^d =  \beta^{n-3-|d|}
    \binom{n-3}{n-3-|d|,d_{1},\dots,d_{n}},
  \end{equation*}
  which are the coefficients of $(t_{1}+\dots+t_{n})^{n-3}$ and $(\beta+t_{1}+\dots+t_{n})^{n-3}$ respectively, where the $t_{i}$ are formal variables. Observe that the formula for the $K$-theoretic
  intersections agrees with the formula for the Chow ring
  intersections when $|d| = n-3$. In fact, this is true more generally in
  cobordism.
\end{exmp}

\subsection{Calculations}
We can use Theorem \ref{MainTheorem} to explicitly calculate
cobordism-valued psi-class intersections in low dimensions after
fixing a choice of generators for the Lazard ring
$\L^{*} \cong \Z[u_{1},u_{2},u_{3},\dots]$. Our choice of generators
corresponds to the presentation of the universal formal group law
(Equation \ref{UnivFGL}) given in Appendix \ref{Appendix}. Explicitly, in
low degrees we have that
\begin{itemize}
\item[] $u_{1} = [\P^{1}]$
\item[] $u_{2} = [\P^{2}]$
\item[] $u_{3} = [\Bl_{\text{pt}}\P^{3}]-[\P^{1}]^{3}$
\item[] $u_{4} = [\P_{\P^{2}}(\Oo(1) \oplus \Oo \oplus \Oo)]-[\P^{1}]^{4}-3[\P^{1}]^{2}[\P^{2}]+4[\P^{1}][\Bl_{\text{pt}}\P^{3}]$.
\end{itemize}
Note that $[\Bl_{\text{pt}}\P^{3}]-[\P^{1}]^{3}$ is not equal to $[\P^{3}]$, as a calculation of the Chern numbers of these varieties shows. We discuss a method to obtain an expression for $u_{5}$ without denominators in Example \ref{LowDegGen} involving Milnor hypersurfaces. For brevity, we only mention that
\begin{equation*}
2u_{5} = [\P_{\P^{3}}(\Oo(1) \oplus \Oo \oplus \Oo)]-[\P^{1}][\P_{\P^{2}}(\Oo(1) \oplus \Oo \oplus \Oo)]-[\P^{1}]^{3}[\P^{2}]-[\P^{1}][\P^{2}]^{2}.
\end{equation*}

In terms of these generators, we obtain the following expressions for the cobordism classes $[\oM_{0,n}]$ for $n \le 8$. In Section \ref{Tables}, we provide a complete calculation of all psi-class intersections on $\oM_{0,n}$ for $n \le 8$ in terms of these generators.
\begin{exmp} \label{ExCalc}
We have that
\begin{itemize}
\item[] $[\oM_{0,3}] = 1$
\item[] $[\oM_{0,4}] = u_{1}$
\item[] $[\oM_{0,5}] = 4u_{1}^{2}-3u_{2}$
\item[] $[\oM_{0,6}] = 31u_{1}^{3}-30u_{1}u_{2}+17u_{3}$
\item[] $[\oM_{0,7}] = 273u_{1}^{4}-317u_{1}^{2}u_{2}+70u_{2}^{2}+214u_{1}u_{3}-25u_{4}$
\item[] $[\oM_{0,8}] = 2898u_{1}^{5}-4063u_{1}^{3}u_{2}+2012u_{1}u_{2}^{2}+2765u_{1}^{2}u_{3}-1204u_{2}u_{3}-385u_{1}u_{4}+461u_{5}$.
\end{itemize}
\end{exmp}
Coates and Givental \cite{CoatesGivental} discuss psi-class intersections in rational complex cobordism theory, which can be computed using the Hirzebruch-Riemann-Roch formula in $\text{MU}^{*} \otimes \Q$, together with cohomological intersection theory on $\oM_{0,n}$. Their answer, however, is not an explicit formula, and is expressed abstractly via a symplectic formalism. They do, however, explicitly calculate the rational cobordism classes of $\oM_{0,n}$ for $n \le 6$ using the basis of $\text{MU}^{*}(\text{pt}) \otimes \Q = \Q[[\C P^1],[\C P^2], [\C P^3],\dots]$ given by complex projective spaces. For example, they calculate that
\begin{equation*}
[\oM_{0,6}] = \frac{45}{2}[\C P^{1}]^{3}-30[\C P^{1}][\C P^{2}]+\frac{17}{2}[\C P^{3}],
\end{equation*}
which agrees with our calculation in Example \ref{ExCalc}. Indeed, we can express $[\P^{3}]$ in terms of our integral basis,
\begin{equation*}
[\P^{3}] = u_{1}^{3}+2u_{3},
\end{equation*}
by calculating the Chern numbers of all varieties involved. We include in Example \ref{ExmpCalcs} expansions of $[\oM_{0,n}]$ for $n \le 8$ in the rational basis of projective spaces. 
\begin{rem}
Givental in his paper \cite{Givental} remarks that there is no simple formula for the string equation in quantum cobordism theory. However, we derive such a string equation in the case of genus zero with the target being a point. 
\end{rem}

\subsection{Outline of the proof}

The main discrepancy that arises when generalizing the classical proof of the string equation from the Chow ring to algebraic cobordism, is that for $\pi \colon \oM_{0,n+1} \to \oM_{0,n}$ the universal curve that forgets the $(n+1)^{\text{st}}$ marking, we have that the pushforward $\pi_{*}(1)$ is zero in the Chow ring, but it is non-zero in algebraic cobordism. Here
\begin{equation*}
\pi_{*}(1) = [\oM_{0,n+1} \xrightarrow{\pi} \oM_{0,n}]
\end{equation*}
is the cobordism class of the universal curve, where the precise notion of the cobordism class of a map is discussed in Section \ref{GeomDescr}. To calculate this pushforward and express it in terms of the psi-classes, we use Keel's description \cite{Keel} of $\oM_{0,n}$ as an iterated blow-up, together with Levine-Pandharipande's formula \cite{LevinePandharipande}
\begin{equation*}
[\Bl_{Z}X \to X] = [X \xrightarrow{\text{id}} X]-[\P(N_{Z/X} \oplus \Oo_{Z}) \to X]+[\P_{\P(N_{Z/X})}(\Oo(1) \oplus \Oo) \to X]
\end{equation*}
for the cobordism class of the blow-up along a smooth subvariety $Z \hookrightarrow X$
with normal bundle $N_{Z/X}$. We also make use of a theorem of Quillen, presented as Theorem \ref{Vishik} below,
to calculate the cobordism classes of the projective bundles appearing
in the blow-up formula.

The key technical input in our proof is an explicit calculation of the
normal bundles of the blow-up centers in Keel's construction. Namely, Keel constructs $\oM_{0,n+1}$ as a
sequence of blow-ups
\begin{equation*}
B_{n-2} \xrightarrow{f_{n-2}} B_{n-3} \xrightarrow{f_{n-3}} \dotsm \xrightarrow{f_{3}} B_{2} \xrightarrow{f_{2}} B_{1}
\end{equation*}
along smooth codimension two centers, where $B_{1} = \oM_{0,n} \times \P^{1}$ and the center of the blow-up $B_{k+1} \xrightarrow{f_{k+1}} B_{k}$ is isomorphic to the union of all divisors $D^{T}$ of $\oM_{0,n}$ with $\vert{T}\vert = k+1$, which are disjoint. Then, our key lemma is the following.
\begin{lem} \label{KeyLemma}
For $1 \le k \le n-3$ and $\vert{T^c}\vert = k+1$, we have that
\begin{equation*}
N_{D^T/B_k} = (L_{a}^{\vee} \otimes L_{b}^{\vee}) \oplus L_{b}^{\vee}.
\end{equation*}
\end{lem}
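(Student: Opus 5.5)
The plan is to induct on $k$ and follow the strict transform of the center through Keel's tower. The proof rests on two facts: the normal bundle is already visible on $B_1$, and each earlier blow-up twists only one summand.

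\textbf{Setting up on $B_1$.} I view $B_1=\oM_{0,n}\times\P^1$ with $\P^1\cong\oM_{0,4}$, the point of $\P^1$ recording the cross-ratio of the markings $1,2,3,n+1$. Since $|T^c|=k+1$ and $|T\cap\{1,2,3\}|\le 1$, the set $T^c$ contains two of the markings $1,2,3$, say $j,l$. Let $m$ be the third one.

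Under Keel's construction, the center in $B_k$ labelled by $T$ is the strict transform of $Y_T:=D^T\times\{x_m\}\subset B_1$. Here $x_m\in\oM_{0,4}$ is the boundary point where $n+1$ bubbles off together with $m$. (If $m\notin T$ one uses the analogous point; the argument is symmetric.)

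On $B_1$ the normal bundle splits as
\begin{equation*}
N_{Y_T/B_1}=N_{D^T/\oM_{0,n}}\boxplus T_{x_m}\P^1 = (L_a^\vee\otimes L_b^\vee)\oplus\Oo .
\end{equation*}
This uses the standard identification of the normal bundle of a boundary divisor with $L_a^\vee\otimes L_b^\vee$. For $k=1$ this is already the claim, because $L_b^\vee$ is trivial on $D^T$ when $|T^c|=2$: the factor $\oM_{0,3}$ is a point.

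\textbf{Inductive step.} Passing from $B_{k'}$ to $B_{k'+1}$ with $k'<k$, I blow up the (disjoint) centers $D^{T'}$ with $|T'^c|=k'+1$. Only those with $T'^c\subsetneq T^c$ and $j,l\in T'^c$ meet the strict transform $\tilde Y_T$. Each such intersection is the boundary divisor of $D^T$ lying over the divisor $D_{T'^c}:=D^{\{\bullet\}\cup(T^c\setminus T'^c)}$ of $\oM_{0,|T^c|+1}$.

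I will check, in local coordinates given by the product structure, that $\tilde Y_T$ meets $\tilde Z_{T'}$ cleanly along a Cartier divisor $W$ of $\tilde Y_T$. Then blowing up does not change $\tilde Y_T\cong D^T$, and the new normal bundle is the elementary modification
\begin{equation*}
0\to N_{\tilde Y_T/B_{k'+1}}\to N_{\tilde Y_T/B_{k'}}\to Q\to 0,
\end{equation*}
where $Q$ is a line bundle on $W$.

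The crucial point is that $Q$ is the quotient onto the $\P^1$-direction summand: the first summand, the direction normal to $D^T$ inside $\oM_{0,n}$, is tangent to $\tilde Z_{T'}$. Because the bundle is split, the modification replaces the second summand $M$ by $M(-W)$ and leaves $L_a^\vee\otimes L_b^\vee$ unchanged.

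\textbf{Conclusion and main obstacle.} After all the steps, the second summand is
\begin{equation*}
\Oo\Bigl(-\sum_{S}D_S\Bigr),
\end{equation*}
where $S$ runs over the boundary divisors of $\oM_{0,|T^c|+1}$ separating $\bullet$ from $\{j,l\}$. By the standard genus zero relation, $\psi_\bullet$ is the sum of exactly these divisors, so this summand is $L_b^\vee$, as claimed.

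I expect the main obstacle to be the clean-intersection and transversality check: that $\tilde Y_T\cap\tilde Z_{T'}$ is exactly this divisor with multiplicity one, and that the quotient $Q$ really kills the $L_a^\vee\otimes L_b^\vee$ summand rather than some mixture of the two. I would first try to verify this using Keel's explicit description of $B_k$ as the fibre product $\oM_{0,n}\times_{\oM_{0,n-1}}\cdots$, via the forgetful maps to $\oM_{0,4}$. Failing that, I would compute in étale-local coordinates near a point of the intersection using the boundary smoothing parameters.
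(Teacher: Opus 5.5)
In the case $m\in T$ your outline is essentially the paper's Case 1. The other case is not symmetric, however, and your parenthetical ``if $m\notin T$ one uses the analogous point'' is false.

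\textbf{The case $T\cap\{1,2,3\}=\emptyset$.} This case occurs once $k\ge 2$. The center $S_1^T=(\pi_1\circ\sigma^i)(D^T)$, $i\in T$, is the graph of $g=\pi_{1,2,3,i}\vert_{D^T}=\pi_{1,2,3,\bullet}\circ\text{pr}_2$. Here $g$ is surjective onto $\P^1$, not constant, so there is no analogous point. Consequently $N_{S_1^T/B_1}=(L_a^\vee\otimes L_b^\vee)\oplus g^*\Oo_{\P^1}(2)$, not $(L_a^\vee\otimes L_b^\vee)\oplus\Oo$.

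Your list of earlier centers meeting the strict transform is also wrong. By Keel's Lemma 4.4, $S^V$ meets $S^T$ for every $V\supsetneq T$. This includes those with $j\in V$ or $l\in V$, not only those with $j,l\in V^c$.

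For instance, take $n=5$ and $T=\{4,5\}$. Then $D^T\cong\oM_{0,4}\cong\P^1$, and $S_1^T$ is the graph of an isomorphism $\P^1\to\P^1$, with second normal summand $\Oo(2)$. All three centers $D^{\{a,4,5\}}\times\{x_a\}$, $a=1,2,3$ (with $x_a$ as in your notation), meet it. The correct answer is therefore $\Oo(2-3)=\Oo(-1)$, whereas your bookkeeping computes $\Oo(0-1)$.

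The final formulas agree only because your two errors cancel. Write $\Oo_{\P^1}(2)\cong\Oo(D^{\{j,i\}}+D^{\{l,i\}})$ on $\oM_{0,4}$ and pull back along $\pi_{1,2,3,i}$. This gives $g^*\Oo_{\P^1}(2)\cong\Oo_{D^T}\bigl(\sum_{V\supset T,\, j\in V}D^T\cap D^V+\sum_{V\supset T,\, l\in V}D^T\cap D^V\bigr)$, which is exactly the set of exceptional contributions you omitted. That computation (the paper's Case 2) is the missing step. As written, your treatment of this case rests on false intermediate claims.

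\textbf{Your flagged obstacle.} The paper avoids local coordinates by using Keel's lemma that $S_k^T$ is the complete intersection of two divisors, for $i\in T$:
\begin{itemize}
\item[] $P_k^T$, the strict transform of $D^T\times\P^1$;
\item[] $\Sigma_k^i$, the strict transform of the $i$-th section.
\end{itemize}
Every center $S^V$ with $V\supset T$ lies in $\Sigma^i$. Hence $P_k^T=f_k^*P_{k-1}^T$ and $\Sigma_k^i=f_k^*\Sigma_{k-1}^i-\sum E_k^V$. It follows that $N_{S_k^T/B_k}=\Oo(P_k^T)\vert_{S_k^T}\oplus\Oo(\Sigma_k^i)\vert_{S_k^T}$. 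The first summand never changes, and the second is twisted by $-(S^T\cap S^V)$ at each step.

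In your language, $T_Y+T_Z=T_{\Sigma^i}$ along $W$, so $Q=\Oo(\Sigma^i)\vert_W$. The summand being modified is $\Oo(\Sigma_1^i)\vert_{S_1^T}=g^*T_{\P^1}$, not a trivial ``$\P^1$-direction''. That is precisely what your bookkeeping misses when $\{1,2,3\}\subset T^c$.
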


\subsection{Organization}

The paper is organized as follows. In Section \ref{Background}, we
introduce the theory of
algebraic cobordism, in particular the geometric description due to
Levine and Pandharipande, including the blow-up formula and Quillen's
formula for the pushforward of a projective bundle. Section
\ref{CobClassSection} is devoted to proving Lemma \ref{KeyLemma}, stated as Lemma \ref{Conormals} in Section \ref{NormalBundles}, and
applying Levine-Pandharipande's blow-up formula for the universal
curve on $\oM_{0,n}$.

In Section \ref{StringSection} we complete the proof of Theorem
\ref{MainTheorem}, and in Section \ref{KTheoryChow} discuss the cases
of the Chow ring and $K$-theory, in particular the closed formulas of
Example \ref{ClosedFormulas}. Finally, in Appendix \ref{Appendix}, we
present the calculations with the universal formal group law needed to
produce the tables in Section \ref{Tables}, which give all
cobordism-valued psi-class intersections on $\oM_{0,n}$ for
$n \le 8$.

\subsection*{Acknowledgments}

We thank our advisor Ezra Getzler for his advice and assistance.

\section{Algebraic cobordism} \label{Background}

\subsection{Basic properties of algebraic cobordism}

Let $k$ be an algebraically closed field of characteristic 0, which we
fix throughout this paper. Let $\Sm_{k}$ be the category of smooth
quasi-projective schemes over $k$, and $\Rings^{*}$ be the
category of graded commutative rings with unit. Algebraic cobordism,
first constructed functorially by Levine-Morel \cite{AlgCob}, is a
functor $\Omega^{*} \colon \Sm_{k}^{\text{op}} \to \Rings^{*}$
satisfying a list of axioms analogous to those in Quillen's paper
\cite{Quillen} on complex cobordism. In particular, $\Omega^{*}$ is
equipped with pushforward maps
\begin{equation*}
f_{*} \colon \Omega^{*}(Y) \to \Omega^{*+d}(X)
\end{equation*}
for projective morphisms $f \colon Y \to X$ of relative codimension $d$, which are homomorphisms of graded $\Omega^{*}(X)$-modules. That is, we have $f_{*}(f^{*}(\alpha)\cdot \beta) = \alpha \cdot f_{*}(\beta)$ for $\alpha \in \Omega^{*}(X)$ and $\beta \in \Omega^{*}(Y)$, also known as the projection formula. Using the pushforward maps, we define the first Chern class of a line bundle $L \to X$ by $c_{1}(L) := s^{*}s_{*}(1)$, where $1 \in \Omega^{0}(X)$ is the unit and $s \colon X \to L$ is the zero section. The higher Chern classes of vector bundles are constructed via Grothendieck's method using the projective bundle formula. For line bundles $L,M \to X$, we have
\begin{equation} \label{FormalGroupLaw}
c_{1}(L \otimes M) = c_{1}(L)+_{\Omega} c_{1}(M),
\end{equation}
where $x+_{\Omega} y \in \Omega^{*}(\Spec k)[[x,y]]$ is the unique formal group
law associated to $\Omega^{*}$. There exists a unique power
series $\bar x \in \Omega^{*}(\Spec k)[[x]]$ satisfying
$x+_{\Omega}\bar x = 0$, called the inverse of the formal group law, where
for a line bundle $L \to X$ we have
\begin{equation*}
c_{1}(L^{\vee}) = \overline{c_{1}(L)}.
\end{equation*}
\begin{thm}[Levine-Morel \cite{AlgCob}]
  We have that $\Omega^{*}(\Spec k)$ is isomorphic to the Lazard ring
  $\L^{*} \cong \Z[u_{1},u_{2},u_{3},\dots]$ classifying one-dimensional,
  commutative formal group laws, and $x+_{\Omega}y$ is the universal formal
  group law. Here the generator $u_{i}$ has degree $-i$.
\end{thm}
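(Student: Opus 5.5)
The plan is to construct a ring map from the Lazard ring to $\Omega^{*}(\Spec k)$ and then prove separately that it is injective and surjective. The first step is formal. The axioms make $c_{1}$ satisfy \eqref{FormalGroupLaw} for some commutative one-dimensional formal group law with coefficients in $\Omega^{*}(\Spec k)$. By Lazard's theorem this law is classified by a unique graded ring homomorphism
\begin{equation*}
\vartheta \colon \L^{*} \to \Omega^{*}(\Spec k),
\end{equation*}
which sends the universal law to $x+_{\Omega}y$. Degrees match because $c_{1}$ has degree $1$, so the coefficient of $x^{i}y^{j}$ has degree $1-i-j$. It then remains to show that $\vartheta$ is an isomorphism. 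Once this is done, the law $x+_{\Omega}y$ is universal by construction.

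For injectivity I would use characteristic numbers. There is an orientation map $\Omega^{*} \to \text{CH}^{*}$, and we twist it by the universal Todd class. Concretely, take the universal additive orientation on $\text{CH}^{*}\otimes\Z[b_{1},b_{2},\dots]$, with logarithm $x+\sum b_{i}x^{i+1}$. This gives a natural transformation $\Omega^{*}(\Spec k)\to\Z[b_{1},b_{2},\dots]$. It sends $[X]$ to the generating series of Chern numbers of $X$, obtained as $\deg$ of a polynomial in the Chern classes of $T_{X}$. The composite $\L^{*}\to\Z[b_{1},b_{2},\dots]$ classifies the formal group law obtained from the additive law by the change of coordinates given by the logarithm. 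That is exactly Lazard's Hurewicz map, which is known to be injective because $\L^{*}$ is torsion free and the map becomes an isomorphism after tensoring with $\Q$. Hence $\vartheta$ is injective.

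Surjectivity is the main obstacle. One must show that the class $[X\to\Spec k]$ of every smooth projective $X$ lies in the subring generated by the coefficients of $x+_{\Omega}y$. I would argue by induction on $\dim X$, following Levine--Morel's generalized degree formula. The aim is to show that for every smooth projective $X$ of dimension $n$,
\begin{equation*}
[X] \in \vartheta(\L^{-n}) + \sum_{Z} \Omega^{*}(\Spec k)_{>-n}\cdot[\widetilde Z],
\end{equation*}
where the classes $[\widetilde Z]$ are supported in strictly lower dimension. One would try to prove this by first embedding $X$ in projective space. Then one uses resolution of singularities, which is available since $\operatorname{char}k=0, to resolve a generic projection to a projective space of dimension $n$. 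The class of $X$ then differs from $\deg(X\to\P^{n})[\P^{n}]$ by pushforwards of classes from lower-dimensional loci.

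The classes $[\P^{n}]$, and more generally the classes of projective bundles over projective spaces, lie in the image of $\vartheta$. This follows from the projective bundle formula combined with the formal group law: for instance, $[\P^{n}]$ appears as a coefficient of the derivative of the logarithm. Inducting on dimension then puts $[X]$ in the image of $\vartheta$. The delicate point is to control the lower-dimensional error terms uniformly, so that the induction closes. Levine--Morel achieve this by proving the degree formula in the relative setting, for $\Omega_{*}(Y)$ with $Y$ arbitrary, and not only over a point. I would adopt that approach, using the localization sequence to push the problem to the singular locus. Alternatively, one could invoke Levine--Pandharipande's description of $\Omega^{*}$ by double point degenerations. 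In that approach, surjectivity reduces to showing that any smooth projective variety is double-point cobordant to a combination of products of the explicit generators, such as Milnor hypersurfaces.
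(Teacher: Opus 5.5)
The paper gives no proof of this statement. It is quoted from Levine--Morel as background, so the only meaningful comparison is with their argument. Against that, your construction of $\vartheta$ and your injectivity argument are correct and standard. The Todd-twisted Chow theory $\text{CH}^{*}\otimes\Z[b_{1},b_{2},\dots]$ receives a map from $\Omega^{*}$ by universality. The composite $\L^{*}\to\Z[b_{1},b_{2},\dots]$ is Lazard's embedding, which is injective because $\L^{*}$ is torsion free and the map is a rational isomorphism. The gap is surjectivity. That is the real content of the theorem, and your sketch does not supply it.

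Concretely, take a finite map $f\colon X\to\P^{n}$ of degree $d$, and grant that $[X\xrightarrow{f}\P^{n}]-d\cdot 1$ vanishes on a dense open $U$. Localization then only says that $[X]-d[\P^{n}]$ is the pushforward of some $\gamma\in\Omega_{n}(Z)$, where $Z=\P^{n}\setminus U$.

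\emph{Why this is vacuous.} Being supported on a lower-dimensional locus does not make $\gamma$ a combination of lower-dimensional classes. Push forward any $\beta\in\Omega^{-n}(\Spec k)$ along a closed point $z\hookrightarrow Z$: this shows the pushforward $\Omega_{n}(Z)\to\Omega^{-n}(\Spec k)$ is surjective whenever $Z\neq\emptyset$. So the conclusion above says nothing, and induction on $\dim X$ gains nothing.

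\emph{Why the induction does not close.} To bring $\gamma$ into the shape of your displayed claim, you need the generalized degree formula on $Z$. Proving that requires the same surjectivity statement in two places. First, over the function fields of subvarieties of $Z$, where the fibres have positive dimension. Second, at closed points of $Z$, over $k$ itself in the same dimension $n$. So your displayed inductive claim is equivalent to what must be proved. Saying that the induction closes as in Levine--Morel defers the entire substance of surjectivity to them.

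\emph{The double point alternative.} This is likewise a restatement rather than a proof. The identification of the double point cobordism ring of $\Spec k$ with $\L^{*}$ in Levine--Pandharipande itself relies on the Levine--Morel isomorphism. You also give no direct geometric reduction of an arbitrary $X$ to Milnor hypersurfaces.
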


\subsection{Geometric description of algebraic cobordism} \label{GeomDescr}

We now give an explicit description of algebraic cobordism by generators and relations due to Levine-Pandharipande \cite{LevinePandharipande}, analogous to the generators and relations of complex cobordism \cite{Quillen}. 
\begin{defn}
Let $f \colon Y \to \P^{1}$ be a projective morphism with $Y$ of pure dimension such that $f^{-1}(0) = A \cup B$, where $A$ and $B$ are smooth Cartier divisors of $Y$ intersecting transversely. Then, we say that $f$ is a \textit{double point degeneration} over $0 \in \P^{1}$ with \textit{degeneracy locus} $D = A \cap B$. 
\end{defn}
In this situation, we have that $\Oo_{Y}(A+B) \vert_{D} = \Oo_{D}$. But $\Oo_{Y}(B) \vert_{D} = N_{D/A}$ and $\Oo_{Y}(A) \vert_{D} = N_{D/B}$ are the normal bundles of $D$ in $A$ and $B$ respectively, so 
\begin{equation*}
N_{D/A} \otimes N_{D/B} = \Oo_{D}.
\end{equation*}
This implies that the projective bundles $\P_{D}(N_{D/A} \oplus \Oo_{D})$ and $\P_{D}(N_{D/B} \oplus \Oo_{D})$ are isomorphic, and we let $\P(f)$ denote the total space of either of these two bundles. 

For a smooth quasi-projective scheme $X$, let $\M^{n}(X)$ denote the group completion of the monoid under disjoint union of isomorphism classes of projective morphisms $Y \to X$ of codimension $n$ with $Y$ smooth, and let $\M^{*}(X) = \bigoplus_{n \ge 0} \M^{n}(X)$. If we have a projective morphism $\pi \colon Y \to X \times \P^{1}$ such that $f = \text{pr}_{2} \circ \pi \colon Y \to \P^{1}$ is a double point degeneration over $0 \in \P^{1}$, where $\text{pr}_{2}$ is the projection onto the second component, and $\xi \in \P^{1}$ is a regular value of $f$, then we can consider the element
\begin{equation*}
r_{\pi,\xi} := [Y_{\xi} \to X]-[A \to X]-[B \to X]+[\P(f) \to X] \in \M^{*}(X),
\end{equation*}
where $Y_{\xi} = f^{-1}(\xi)$, which we call a \textit{double point relation}. Here the maps are given by composing the inclusions in $Y$ with $\text{pr}_{1} \circ \pi$, where $\text{pr}_{1} \colon X \times \P^1 \to X$ is the projection onto the first component. Then, let $\R(X) \subset \M^{*}(X)$ be the subgroup generated by the double point relations $r_{\pi,\xi}$ for any $\pi$ and $\xi$ (noting that $Y$ can also vary). 
\begin{thm}[Levine-Pandharipande \cite{LevinePandharipande}]
For $X \in \Sm_{k}$, there is a canonical isomorphism
\begin{equation*}
\Omega^{*}(X) \cong \M^{*}(X)/\R(X).
\end{equation*}
\end{thm}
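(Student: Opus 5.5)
The plan is to compare the naive cycle theory $\omega^{*}(X) := \M^{*}(X)/\R(X)$ with $\Omega^{*}(X)$ using the universal property of algebraic cobordism as the universal oriented cohomology theory on $\Sm_{k}$, due to Levine--Morel. There are two maps to produce, one in each direction, and they must be shown to be mutually inverse.

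\textbf{The map $\M^{*}(X)\to\Omega^{*}(X)$.} Send $[f\colon Y\to X]$ to $f_{*}(1_{Y})$. This is additive under disjoint union, so it descends from the group completion. The first step is to check that it kills every double point relation $r_{\pi,\xi}$. Pushing forward along $Y\to X\times\P^{1}\to X$, it suffices to work on $Y$ itself. There the fibres $Y_{\xi}$ and $f^{-1}(0)=A\cup B$ are both zero loci of sections of $f^{*}\Oo(1)$. Hence
\begin{equation*}
[Y_{\xi}\to Y] = c_{1}(f^{*}\Oo(1)) = c_{1}(\Oo_{Y}(A)\otimes\Oo_{Y}(B)) = c_{1}(\Oo_{Y}(A))+_{\Omega}c_{1}(\Oo_{Y}(B)).
\end{equation*}
Expanding the formal group law and using $c_{1}(\Oo_{Y}(A))=[A\to Y]$, $c_{1}(\Oo_{Y}(B))=[B\to Y]$ gives
\begin{equation*}
[A\to Y]+[B\to Y]+\iota_{D*}\bigl(F(c_{1}(N_{D/A}),c_{1}(N_{D/B}))\bigr),
\end{equation*}
where $\iota_{D}\colon D\to Y$ is the inclusion and $F$ collects the terms $a_{ij}x^{i}y^{j}$ with $i,j\ge1$, divided by $xy$. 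Since $N_{D/A}\otimes N_{D/B}=\Oo_{D}$, the second Chern class is the inverse $\overline{c_{1}(N_{D/A})}$. Quillen's formula for pushforward along a projective bundle (Theorem \ref{Vishik}) then identifies this correction term with $-[\P(f)\to Y]$. This computation is essentially Levine--Morel's formula for the class of a reduced normal crossing divisor.

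\textbf{Surjectivity.} This comes from the Levine--Morel generation result: $\Omega^{*}(X)$ is generated by classes $f_{*}(1)$ for projective $f$ with smooth source. Chern classes of line bundles can be rewritten in this form as classes of smooth zero loci after twisting by sufficiently ample bundles.

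\textbf{Injectivity.} Here I would make $\omega^{*}$ into an oriented cohomology theory and invoke universality to get an inverse $\Omega^{*}\to\omega^{*}$. This requires several pieces of structure.
\begin{itemize}
\item[(i)] Pushforwards are given by composition, and exterior products by products.
\item[(ii)] Pullback along smooth morphisms is given by fibre product. Pullback along arbitrary morphisms uses generic transversality in characteristic zero: move $Y\to X$ within its double point class to be transverse to $g\colon X'\to X$, via a Kleiman--Bertini argument after factoring $g$ through a graph embedding and a projection.
\item[(iii)] The first Chern class is defined by $c_{1}(L)=s^{*}s_{*}(1)$, or equivalently as the class of the zero locus of a transverse section.
\end{itemize}
With this structure in hand, the Levine--Morel axioms must be checked, in particular (Dim), (Sect) and (FGL).

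\textbf{The main obstacle.} I expect the hardest step to be showing that $\omega^{*}$ inherits the formal group law, i.e. that $c_{1}(L\otimes M)$ equals $F(c_{1}(L),c_{1}(M))$ for a formal group law with coefficients in $\omega^{*}(k)$. It must also be shown that pullback is independent of the moving choices, which amounts to proving that double point relations are themselves preserved under transverse pullback.

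\textbf{Attack on the obstacle.} I would first treat the universal case, where $L$ and $M$ are pulled back from a product of projective spaces. There I would build explicit double point degenerations, for instance degenerating a hypersurface in the class of $L\otimes M$ into the union of hypersurfaces in the classes of $L$ and $M$. These degenerations yield the relation $[\text{zero locus of }L\otimes M] = [A]+[B]-[\P(f)]$. Iterating this relation produces the formal group law inductively. The key point is that $\omega^{*}(k)$ is generated by the same Milnor-hypersurface type classes as $\L^{*}$. Once (FGL) holds, universality gives $\Omega^{*}\to\omega^{*}$. Its composite with the map above is the identity on the generators $[Y\to X]$, which proves the isomorphism.
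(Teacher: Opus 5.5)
The paper does not prove this statement; it is quoted from Levine--Pandharipande, and your outline has the same overall architecture as their argument. Your first half is correct. With $x=c_1(N_{D/A})$ and $c_1(N_{D/B})=\bar x$, the correction term is $\iota_{D*}$ of $(x+_{\Omega}\bar x-x-\bar x)/(x\bar x)=-(1/x+1/\bar x)$, and Quillen's formula gives $[\P(N_{D/A}\oplus\Oo_D)\to D]=1/x+1/\bar x$. So $[f\colon Y\to X]\mapsto f_*(1_Y)$ kills $\R(X)$, and Levine--Morel's generation result makes $\omega^*(X)\to\Omega^*(X)$ surjective.

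The gap is injectivity, which is the real content of the theorem. First, step (ii) fails as stated. Kleiman's transversality theorem needs a group acting transitively on the target. For a general smooth $X$, nothing lets you move $Y\to X'\times X$ within its double point class to be transverse to the graph of $g$; this is exactly why Levine--Morel construct l.c.i.\ pullbacks by deformation to the normal cone. Hence general pullbacks, and with them $c_1(L)=s^*s_*(1)$, are not available on $\omega^*$ by this route. You also conflate two universal properties. Universality among oriented cohomology theories requires (PB) and (EH), which you never address. The axioms (Dim), (Sect), (FGL) belong instead to the Borel--Moore statement: $\Omega_*$ is universal among oriented Borel--Moore functors of geometric type. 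That is the one to use, since it needs only projective pushforward, smooth pullback, external products and first Chern class operators, after extending $\M/\R$ to $\mathbf{Sch}_k$.

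Second, and this is the missing idea, you never construct $\tilde c_1(L)$ for arbitrary $L$ or prove (FGL) inside $\omega^*$. Zero loci of transverse sections only define $\tilde c_1(L)$ for globally generated $L$. Writing $L=M\otimes N^{\vee}$ to go further already presupposes a formal group law and its inverse with coefficients in $\omega^*(k)$, together with independence of the choice of $M,N$.

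Your pencil degeneration is also not a double point degeneration as stated. The total space of $\lambda st+\mu u=0$ is singular along $Z(s)\cap Z(t)\cap Z(u)$ in the special fibre; locally it is $st+\mu u=0$. A small resolution, e.g.\ blowing up $A$, replaces one of $A,B$ by its blow-up along that locus, so your relation $[Z(L\otimes M)]=[A]+[B]-[\P(f)]$ needs an extra blow-up term.

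More importantly, turning such relations into a power series in $\tilde c_1(L),\tilde c_1(M)$ with coefficients in $\omega^*(k)$ requires a further input. You need to know that classes of projective bundles such as $\P_D(\Oo_D\oplus N)\to D$ (and the rank-three bundles coming from the blow-up correction) are universal power series in Chern classes with coefficients in $\omega^*(k)$. That is a Quillen-type projective bundle formula proved from double point relations alone. You only have it in $\Omega^*$, and invoking it is circular. The same applies to your reduction to $\P^n\times\P^m$, which needs the $\omega^*(k)$-module structure of $\omega^*(\P^n\times\P^m)$.

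Finally, the claim that $\omega^*(k)$ is generated by Milnor hypersurfaces is a consequence of the theorem, not an input: a priori you only know that $\omega^*(k)\to\L^*$ is onto. It would not produce (FGL) in any case. These are the statements on which the bulk of Levine--Pandharipande's argument is spent; without them the injectivity half is not established.
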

From now on, we will simply identify $\Omega^{*}(X)$ with the geometric presentation $\M^{*}(X)/\R(X)$. We then have the following descriptions for the structure maps and products in algebraic cobordism. 
\begin{itemize}
\item[--] \textit{Pushforward}: If $f \colon X \to Z$ is a projective
  morphism, then the pushforward $f_{*}([Y \to X])$ equals
  $[Y \to Z]$, where $Y \to Z$ is given by composing with $f$.
\item[--] \textit{Pullback}: If $f \colon X \to Z$ is a smooth morphism,
  then the pullback $f^{*}([Y \to Z])$ equals
  $[X \times_{Z} Y \to X]$, where $X \times_{Z} Y \to X$ is induced by projection
  onto the first component.
\item[--] \textit{Unit}: The unit $1 \in \Omega^{0}(X)$ is given by the class $[X \xrightarrow{\text{id}} X]$ of the identity map.
\item[--] \textit{External product}: The external product
  $\alpha\times\beta$ of the two classes
  $\alpha = [Y \to X] \in \Omega^{*}(X)$ and
  $\beta = [Y' \to X'] \in \Omega^{*}(X')$ is the product
  $[Y \times_{k} Y' \to X \times_{k} X'] \in \Omega^{*}(X \times_{k} X')$.
\item[--] \textit{Product}: The product $\alpha\cdot\beta$ of the two classes
  $\alpha = [Y \to X] \in \Omega^{*}(X)$ and
  $\beta = [Z \to X] \in \Omega^{*}(X)$ equals
  $\delta^{*}(\alpha \times \beta)$, where
  $\delta \colon X \to X \times_{k} X$ is the diagonal embedding.
\end{itemize}
\begin{rem}
We will use the notation $[Y] := [Y \to \Spec k]$ for elements of $\Omega^{*}(\Spec k) = \L^{*}$. 
\end{rem}
An important property of algebraic cobordism which we use throughout this paper is that $\Omega^{i}(X) = 0$ for $i > \text{dim}\,X$. Another important property is that the class $[X] \in \L^{*}$ of a smooth projective variety $X$ is completely determined by its Chern numbers. 

For integers $m,n > 0$, define the line bundle $\Oo(1,1) = \text{pr}_{1}^{*}\Oo_{\P^m}(1) \otimes \text{pr}_{2}^{*}\Oo_{\P^n}(1)$ on $\P^{m} \times \P^{n}$. Then, the Milnor hypersurface $H_{m,n} \subset \P^{m} \times \P^{n}$ is the smooth closed subscheme defined by the zero locus of a section of $\Oo(1,1)$ that is transverse to the zero section. The following is proved by Levine-Morel \cite[Lemma 2.5.5]{AlgCob}, and is a straightforward application of Equation \ref{FormalGroupLaw}. 
\begin{lem} \label{Milnor}
Expand the universal formal group law as $x+_{\Omega}y = x+y+\sum_{i,j \ge 1} a_{ij}x^{i}y^{j}$. Then, we have that
\begin{equation*}
[H_{m,n}] = [\P^{m}][\P^{n-1}]+[\P^{m-1}][\P^{n}]+\sum_{i=1}^{m}\sum_{j=1}^{n}a_{ij}[\P^{m-i}][\P^{n-j}].
\end{equation*}
\end{lem}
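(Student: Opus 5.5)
The plan is to realize $H_{m,n}$ as the zero locus of a transverse section of $\Oo(1,1)$ on $P = \P^m \times \P^n$. Then its class pushed forward to $P$ is
\begin{equation*}
[H_{m,n} \to P] = s^{*}s_{*}(1) = c_{1}(\Oo(1,1)) \in \Omega^{1}(P).
\end{equation*}
This uses the standard fact that for a section transverse to the zero section, the zero-locus class equals $c_1$. In the geometric presentation this is the identity $s^*s_*(1) = [Z(\sigma) \to P]$, which follows from deforming the zero section to $\sigma$ (the pullback along $s$ can be computed by any section, by homotopy invariance of the bundle projection). Applying $\int_P$ then gives $[H_{m,n}] = \int_P c_1(\Oo(1,1))$.

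Next I expand the Chern class using Equation \ref{FormalGroupLaw}. Write $x = \text{pr}_1^* c_1(\Oo(1))$ and $y = \text{pr}_2^* c_1(\Oo(1))$. Then
\begin{equation*}
c_1(\Oo(1,1)) = x +_{\Omega} y = x + y + \sum_{i,j\ge1} a_{ij}x^i y^j.
\end{equation*}
The sum is finite because $x^{m+1} = 0$ and $y^{n+1} = 0$; indeed $\Omega^{i}(\P^m) = 0$ for $i > m$.

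It remains to compute $\int_P x^i y^j$. The key input is that $c_1(\Oo(1))^i \in \Omega^i(\P^m)$ is the class of a linear subspace $[\P^{m-i} \hookrightarrow \P^m]$. I would argue this by induction: $c_1(\Oo(1)) = [\P^{m-1}\hookrightarrow \P^m]$ by the transverse-section fact above. Multiplying by it corresponds, via the projection formula and transversality, to intersecting with a generic hyperplane. Hence $\int_{\P^m} x^i = [\P^{m-i}]$ for $0\le i\le m$, and the integral is $0$ for $i>m$. The exterior product structure then gives $\int_P x^i y^j = [\P^{m-i}][\P^{n-j}]$.

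Summing these terms yields the formula. The terms $x$ and $y$ contribute $[\P^{m-1}][\P^n]$ and $[\P^m][\P^{n-1}]$ respectively, and the correction terms contribute $\sum_{i\le m,\, j\le n} a_{ij}[\P^{m-i}][\P^{n-j}]$.

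The only step I expect to need care is the identification of a transverse zero locus with $c_1$ (and of products of $c_1$ with transverse intersections). In the Levine–Pandharipande presentation this follows from the definition $c_1(L)=s^*s_*(1)$ together with the transverse-pullback description of $s^*$. Since pullback was only spelled out above for smooth morphisms, I would cite Levine–Morel for pullback along a regular embedding transverse to the given map.
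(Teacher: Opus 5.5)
Your proposal is correct and follows essentially the argument the paper relies on. The paper gives no proof of its own: it cites Levine--Morel \cite[Lemma 2.5.5]{AlgCob} and calls the lemma a straightforward application of Equation~\ref{FormalGroupLaw}. That cited argument is yours: identify $[H_{m,n}\to\P^m\times\P^n]$ with $c_1(\Oo(1,1))$, expand it as $x+_{\Omega}y$, and evaluate $\int x^iy^j=[\P^{m-i}][\P^{n-j}]$ using $c_1(\Oo(1))^i=[\P^{m-i}\hookrightarrow\P^m]$. The two facts you flag as needing care, that a transverse zero locus represents $c_1$ and transverse pullback along a section, are standard results in Levine--Morel, so citing them as you propose leaves no gap.
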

\begin{exmp} \label{MilnorExmp}
Up to a change of coordinates, we have that $H_{1,1}$ is the diagonal copy of $\P^{1}$ inside $\P^{1} \times \P^{1}$. By Lemma \ref{Milnor}, we have
\begin{equation*}
[H_{1,1}] = [\P^{1}]+[\P^{1}]+a_{11},
\end{equation*}
which implies that $a_{11} = -[\P^{1}]$. 
\end{exmp}

\subsection{Pushforwards of blow-ups and projective bundles}

Let $i \colon Z \to X$ be a regular embedding in $\Sm_{k}$ with normal bundle $N_{Z/X}$, and let $\Bl_{Z}X$ denote the blow-up of $X$ along $Z$. Let $p \colon \P(N_{Z/X}) \to Z$ be the projection. We have the following formula for the cobordism class of the blow-up map \cite[Section 4.1]{LYZR}.
\begin{lem}[Blow-up formula] \label{Blowup}
We have the relation
\begin{align*}
[\Bl_{Z}X \to X] &= [X \xrightarrow{id} X]-i_{*}[\P(N_{Z/X} \oplus \Oo_{Z}) \to Z]+i_{*}p_{*}[\P_{\P(N_{Z/X})}(\Oo(1)\oplus\Oo) \to \P(N_{Z/X})] \\
&= [X \xrightarrow{id} X]-[\P(N_{Z/X} \oplus \Oo_{Z}) \to X]+[\P_{\P(N_{Z/X})}(\Oo(1)\oplus\Oo) \to X].
\end{align*}
\end{lem}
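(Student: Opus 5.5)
We prove the lemma by a single double point relation, coming from the deformation to the normal cone of $Z$ in $X$. Let $Y = \Bl_{Z \times \{0\}}(X \times \P^{1})$, and let $\pi \colon Y \to X \times \P^{1}$ be the blow-down map. Then $\pi$ is projective, $Y$ is smooth, and we set $f = \text{pr}_{2} \circ \pi \colon Y \to \P^{1}$. We aim to show that $f$ is a double point degeneration over $0$, and then read off the relation $r_{\pi,\xi}$ for a regular value $\xi = \infty$.

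First we identify the fibers of $f$. Over $\xi \ne 0$ the map $\pi$ is an isomorphism, so $Y_{\xi} \cong X$ and $\text{pr}_{1}\circ\pi$ restricts to the identity. This gives the term $[X \xrightarrow{\text{id}} X]$. Over $0$, the standard description of the deformation to the normal cone gives
\begin{equation*}
f^{-1}(0) = A \cup B, \qquad A = \Bl_{Z}X, \qquad B = \P(N_{Z/X \times \P^{1}}) = \P(N_{Z/X} \oplus \Oo_{Z}).
\end{equation*}
Here $A$ is the strict transform of $X \times \{0\}$, and $B$ is the exceptional divisor, using $N_{Z\times 0/X\times\P^1} = N_{Z/X}\oplus\Oo_Z$. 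The two components are smooth Cartier divisors meeting transversely along $D = A\cap B$. This $D$ is the exceptional divisor $\P(N_{Z/X})$ of $\Bl_Z X$, which sits inside $B$ as the hyperplane bundle at infinity. The composite $B \to X$ factors through the bundle projection to $Z$ followed by $i$, and $D \to X$ factors through $p$ and $i$. The main thing to verify carefully here is the transversality and multiplicity one of $f^{-1}(0)$. I would do this in local coordinates: if $Z$ is cut out by $x_1,\dots,x_r$ and $t$ is the coordinate on $\P^1$, then in each blow-up chart $t$ factors as a product of two independent local parameters (or is itself one), which gives the claim.

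Next we compute $\P(f)$. We have $N_{D/A} = \Oo_{\P(N_{Z/X})}(-1)$, the normal bundle of the exceptional divisor, so
\begin{equation*}
\P(f) = \P_{\P(N_{Z/X})}(\Oo(-1)\oplus\Oo) \cong \P_{\P(N_{Z/X})}(\Oo(1)\oplus\Oo),
\end{equation*}
where the isomorphism comes from tensoring by $\Oo(1)$. As a consistency check, $N_{D/B} = \Oo(1)$, so $N_{D/A}\otimes N_{D/B}$ is trivial, as required for a double point degeneration.

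The double point relation now reads
\begin{equation*}
[X \to X] - [\Bl_Z X \to X] - [\P(N_{Z/X}\oplus\Oo_Z) \to X] + [\P_{\P(N_{Z/X})}(\Oo(1)\oplus\Oo) \to X] = 0
\end{equation*}
in $\Omega^{*}(X) = \M^{*}(X)/\R(X)$. Rearranging this relation gives the second line of the lemma. The first line then follows from the description of pushforward as composition: $i_{*}[B \to Z] = [B \to X]$, and $i_{*}p_{*}[\P(f) \to \P(N_{Z/X})] = [\P(f) \to X]$.

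I expect the main obstacle to be the geometric verification that $f^{-1}(0)$ is reduced with two transversally meeting smooth components, together with the correct identification of the normal bundles $N_{D/A}$ and $N_{D/B}$. The rest is bookkeeping of the double point relation.
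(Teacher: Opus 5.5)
Your proposal is correct and follows the paper's proof: the paper also takes the deformation to the normal cone $\Bl_{Z\times\{0\}}(X\times\P^1)$, identifies $A=\Bl_Z X$, $B=\P(N_{Z/X}\oplus\Oo_Z)$ and $D=\P(N_{Z/X})$, and reads the formula off as the associated double point relation. The only cosmetic difference is that the paper gets $\P(f)$ directly from $N_{D/B}=\Oo(1)$, whereas you use $N_{D/A}=\Oo(-1)$ and then twist by $\Oo(1)$; both give the same bundle.
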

\begin{proof}
Consider the deformation to the normal cone, namely the blow-up of $X \times \P^{1}$ along $Z \times \{0\}$, and denote this blow-up by $Y$. Then, the blow-up map $Y \to X \times \P^{1}$ is a double-point degeneration over $0 \in \P^{1}$ with $A = \Bl_{Z}X$, $B = \P(N_{Z/X} \oplus \Oo_{Z})$, and degeneracy locus $D = \P(N_{Z/X})$, which has normal bundle $\Oo(1)$ inside $B$. Since the fiber over a regular value of $\P^{1}$ is equal to $X$, the blow-up formula is precisely the associated double point relation.
\end{proof}
We have a formula, due to Quillen, that can be used to calculate the projective bundle terms appearing on the right-hand side of the blow-up formula. See \cite[Theorem 5.30]{Vishik} for a proof of this result, and \cite[Proposition 2.1]{SchubertCalculus} for a geometric proof in the rank two case using the double point relation. 
\begin{thm}[Quillen] \label{Vishik}
Let $X \in \Sm_{k}$ and $V \to X$ be a vector bundle of rank $n$, with $\Omega^{*}$-Chern roots $\lambda_{1},\dots,\lambda_{n}$. Let $\pi \colon \P(V) \to X$ be the associated projective bundle, and fix a power series $f(t) \in \Omega^{*}(X)[[t]]$. Letting $\xi = c_{1}(\Oo_{\P(V)}(1))$, we have that 
\begin{equation*}
\pi_{\ast}(f(\xi)) = \sum_{i=1}^{n} \frac{f(\bar\lambda_{i})}{\prod_{j \neq i} (\lambda_{j}+_{\Omega}\bar\lambda_{i})} \in \Omega^{*}(X).
\end{equation*}
\end{thm}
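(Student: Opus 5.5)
The plan is to reduce to a split bundle. Then I would compute $\pi_*$ on a convenient spanning set of $\Omega^*(\P(V))$ over $\Omega^*(X)$, given by the classes of the coordinate sections $\P(L_i)\subset\P(V)$, and finally match the answer with the right-hand side via a formal-group-law version of Lagrange interpolation.

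\emph{Reduction.} By the splitting principle (pull back along a flag bundle $F\to X$, which is injective on $\Omega^*$ and commutes with $\pi_*$ by base change for the transverse square), we may assume $V=L_1\oplus\dots\oplus L_n$ with $\lambda_i=c_1(L_i)$. Both sides are $\Omega^*(X)$-linear in $f$ and compatible with truncation, because $\xi$ and the $\lambda_i$ are nilpotent. We may therefore take $f$ to be a polynomial.

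\emph{Key geometric input.} With the convention forced by the formula, $\Oo_{\P(V)}(-1)|_{\P(L_i)}=L_i$, so $\xi|_{\P(L_i)}=\bar\lambda_i$. The subvariety $\P(L_i)\cong X$ is the transverse zero locus of the section of $\mathrm{Hom}(\Oo(-1),V/L_i)=\bigoplus_{j\ne i}\Oo(1)\otimes L_j$. Hence
\[
[\P(L_i)\to\P(V)]=\prod_{j\neq i}(\xi+_\Omega\lambda_j),
\]
and its pushforward under $\pi$ is $1$. By the same argument with all $j$, we get $\prod_{j}(\xi+_\Omega\lambda_j)=0$ in $\Omega^*(\P(V))$, since the empty zero locus gives zero. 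This agrees with the projective bundle formula. Writing $\xi+_\Omega\lambda_j=(\xi-\bar\lambda_j)\,u_j(\xi,\lambda_j)$ with $u_j$ a unit power series, the projective bundle formula identifies $\Omega^*(\P(V))$ with $\Omega^*(X)[\xi]/\prod_j(\xi-\bar\lambda_j)$, a free module with basis $1,\xi,\dots,\xi^{n-1}$.

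\emph{Interpolation.} I would then establish the identity
\[
f(\xi)\equiv\sum_{i=1}^{n} f(\bar\lambda_i)\,\frac{\prod_{j\neq i}(\xi+_\Omega\lambda_j)}{\prod_{j\ne i}(\lambda_j+_\Omega\bar\lambda_i)}
\]
modulo $\prod_j(\xi-\bar\lambda_j)$. Pushing forward term by term then gives exactly the claimed formula. To make sense of the denominators, I would prove this in the universal situation. There the $\lambda_i$ are independent formal variables, so the identity lives in $\L^*[[\lambda_1,\dots,\lambda_n]][\xi]$ localized at the differences $\bar\lambda_i-\bar\lambda_j$, each of which is a nonzerodivisor.

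In that ring the difference $g(\xi)$ of the two sides is a polynomial in $\xi$ modulo a monic polynomial of degree $n$. It vanishes at each root $\xi=\bar\lambda_i$: at $\xi=\bar\lambda_i$ every term with $k\ne i$ contains the factor $\bar\lambda_i+_\Omega\lambda_i=0$, and the $i$-th term reduces to $f(\bar\lambda_i)$. Since the roots have pairwise invertible differences, $g$ lies in the ideal generated by $\prod_j(\xi-\bar\lambda_j)$. Finally, the right-hand side is symmetric in the $\lambda_i$ and equals $\pi_*f(\xi)$ on the universal model, so it is in fact a genuine power series without denominators. The general case then follows by specialization, for instance via finite approximations $\prod(\P^N)^n$ with $N\gg\deg f$ and pullback along classifying maps for the $L_i$.

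\emph{Main obstacle.} I expect the main difficulty to be this last denominator and specialization step: justifying that an identity proved after localizing at $\bar\lambda_i-\bar\lambda_j$ descends to arbitrary $X$, where the $\lambda_i$ are nilpotent and need not be distinct. I would handle it as follows. First compute $\pi_*$ of the basis $\xi^m$, $m<n$, honestly in the universal ring $\L^*[[\lambda]]$, which is a domain. There localization is injective, so the interpolation argument determines these values uniquely. Then transport the resulting denominator-free symmetric expressions by naturality.
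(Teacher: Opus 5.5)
The paper gives no proof of this statement. It cites Vishik's Theorem~5.30, and for rank two a geometric argument via the double point relation. Your proposal is the standard localization argument in the spirit of Quillen's original proof, and its core is correct. That core consists of the sections $\P(L_i)\subset\P(V)$ with $[\P(L_i)\to\P(V)]=\prod_{j\neq i}(\xi+_\Omega\lambda_j)$ pushing forward to $1$, the transversality computation, and the relation $\prod_j(\xi-\bar\lambda_j)=0$. It also includes the Chinese remainder/interpolation step and the determination of $\pi_*$ on the universal model through injectivity of $A=\L^*[[\lambda]]\to R=A[(\bar\lambda_i-\bar\lambda_j)^{-1}]$. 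Compared with the double point argument, it handles all ranks and all $f$ at once, at the price of needing a universal model.

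Two reductions that are automatic in topology do not hold in $\Omega^*$ as you state them.

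First, pulling back to a flag bundle $F\to X$ only gives a \emph{filtration} of $V$ with line bundle subquotients $L_i$, not a direct sum. Your key geometric input needs every $\P(L_i)$ to sit inside $\P(V)$.

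Second, a line bundle on a smooth quasi-projective variety is pulled back from $\Oo(1)$ on some $\P^N$ only if it is globally generated. So ``pullback along classifying maps'' does not reach arbitrary $L_i$.

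Both are repaired at once by Jouanolou's trick. Choose an affine vector-bundle torsor $q\colon\tilde F\to F$. Then $q^*$ is an isomorphism on $\Omega^*$ by homotopy invariance for vector-bundle torsors, and it commutes with $\pi_*$ by transverse base change. On the affine $\tilde F$, every extension of vector bundles splits and every line bundle is globally generated.

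Alternatively, deform the extension classes to zero over $\mathbb{A}^1$ and use $\mathbb{A}^1$-invariance. Then write $L_i\cong M_i\otimes(M_i')^{\vee}$ with $M_i,M_i'$ globally generated, and run your argument on $\prod_i(\P^N\times\P^N)$ with $\lambda_i=h_i+_\Omega\bar h_i'$. Its limit ring $\L^*[[h,h']]$ is still a domain in which $\bar\lambda_i-\bar\lambda_j\neq 0$.

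A minor point: power series in $\xi$ such as $\xi+_\Omega\lambda_j$ and $f(\xi)$ must be formed in $A[\xi]/(P)$, with $P=\prod_j(\xi-\bar\lambda_j)$, before you localize. The reason is that $\bar\lambda_i-\bar\lambda_j\in(\lambda)$, so the localized ring is no longer $(\lambda)$-adically complete. Injectivity of $A[\xi]/(P)\to R[\xi]/(P)$ then transfers the identity.
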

\begin{rem}
The right-hand side of the formula in Theorem \ref{Vishik} is symmetric in the variables $\lambda_{1},\dots,\lambda_{n}$, and hence can be expressed as a power series in $c_{1}(V),\dots,c_{n}(V)$. This power series is in fact a polynomial, yielding a well-defined element of $\Omega^{*}(X)$, since $\Omega^{i}(X) = 0$ for $i > \text{dim}\,X$. 
\end{rem}
\begin{exmp}[Low degree generators of $\L^{*}$] \label{LowDegGen}
We will use the blow-up and projective bundle formulas, Lemma \ref{Blowup} and Theorem \ref{Vishik}, to determine explicit representatives for low degree generators $u_{i}$ of the Lazard ring $\L^{*}$, with these generators corresponding to the presentation of the universal formal group law $x+_{\Omega}y$ given in Appendix \ref{Appendix}, Equation \ref{UnivFGL}. Let $\text{pt} = \Spec k$. First, we apply Theorem \ref{Vishik} to the trivial bundle $\Oo_{\text{pt}} \oplus \Oo_{\text{pt}}$, so $\lambda_{1} = \lambda_{2} = 0$. As discussed in Appendix \ref{Appendix} (namely Equation \ref{varphi}), the constant term of the power series
\begin{equation*}
\frac{1}{y+_{\Omega}\bar x}+\frac{1}{x+_{\Omega}\bar y}
\end{equation*}
is $u_{1}$. Hence, it follows that
$u_{1} = [\P(\Oo_{\text{pt}} \oplus \Oo_{\text{pt}})] = [\P^{1}]$, which
can alternatively be deduced by Example~\ref{MilnorExmp}. Similarly,
the constant term of the series
\begin{equation}
  \label{3term}
  \frac{1}{(y+_{\Omega}\bar x)(z+_{\Omega}\bar x)} + \frac{1}{(x+_{\Omega}\bar y)
    (z+_{\Omega}\bar y)}+\frac{1}{(x+_{\Omega}\bar z)(y+_{\Omega}\bar z)}
\end{equation}
is $u_{2}$, by Remark \ref{AppendixRemark}, which implies that
$u_{2} = [\P(\Oo_{\text{pt}} \oplus \Oo_{\text{pt}} \oplus \Oo_{\text{pt}})] =
[\P^{2}]$. However, the generator $u_{3}$ is more involved. Applying
Lemma~\ref{Blowup} and pushing forward to a point in the case
$X = \P^{3}$ and $Z = \text{pt}$, we obtain
\begin{align*}
[\Bl_{\text{pt}}\P^{3}] &= [\P^{3}]-[\P^{3}]+[\P_{\P^2}(\Oo(1) \oplus \Oo)] \\
&= [\P_{\P^2}(\Oo(1) \oplus \Oo)].
\end{align*}
\end{exmp}
We calculate the class of this projective bundle again using Theorem \ref{Vishik}, where $\lambda_{1} = 0$ and $\lambda_{2} = [\P^{1} \to \P^{2}]$, noting that $\lambda_{2}^{2} = [\text{pt} \to \P^{2}]$. By Equation \ref{varphi} in Appendix \ref{Appendix}, up to degree 2, we have that
\begin{equation*}
\frac{1}{x}+\frac{1}{\bar x} = u_{1}+(u_{1}^{3}-u_{1}u_{2}+u_{3})x^{2},
\end{equation*}
and so, using that $u_{2} = [\P^{2}]$,
\begin{equation*}
[\Bl_{\text{pt}}\P^{3}] = u_{1}u_{2}+u_{1}^{3}-u_{1}u_{2}+u_{3} = u_{1}^{3}+u_{3}.
\end{equation*}
That is, we have $u_{3} = [\Bl_{\text{pt}}\P^{3}]-[\P^{1}]^{3}$. A different method is required to calculate a representative for $u_{4}$, since the coefficient of $x^{3}$ in $1/x+1/\bar x$ does not contain $u_{4}$ as a term. However, by substituting $y = z = 0$ into the power series \ref{3term}, we have (again by Remark \ref{AppendixRemark}), up to degree 2, the polynomial
\begin{equation*}
u_{2}+(-3u_{1}^{4}+3u_{1}^{2}u_{2}-u_{2}^{2}-4u_{1}u_{3}+u_{4})x^{2}.
\end{equation*}
Hence, we can take 
\begin{align*}
u_{4} &= [\P_{\P^{2}}(\Oo(1) \oplus \Oo \oplus \Oo)]+3u_{1}^{4}-3u_{1}^{2}u_{2}+4u_{1}u_{3} \\
&= [\P_{\P^{2}}(\Oo(1) \oplus \Oo \oplus \Oo)]-[\P^{1}]^{4}-3[\P^{1}]^{2}[\P^{2}]+4[\P^{1}][\Bl_{\text{pt}}\P^{3}].
\end{align*}
By the same method using Remark \ref{AppendixRemark}, we get that
\begin{equation} \label{2u5}
2u_{5} = [\P_{\P^{3}}(\Oo(1) \oplus \Oo \oplus \Oo)]-[\P^{1}][\P_{\P^{2}}(\Oo(1) \oplus \Oo \oplus \Oo)]-[\P^{1}]^{3}[\P^{2}]-[\P^{1}][\P^{2}]^{2}.
\end{equation}
A more complicated expression for $u_{5}$ without denominators in terms of the Milnor hypersurfaces $H_{3,3}$, $H_{5,1}$ and $H_{4,2}$ can be obtained by Lemma \ref{Milnor}, since by Equation \ref{UnivFGL} the coefficient of $u_{5}$ in $a_{33}-a_{51}-a_{42}$ is equal to 1, where $a_{ij}$ is the coefficient of $x^{i}y^{j}$.

\section{The cobordism class of $\oM_{0,n}$} \label{CobClassSection}

For $n \ge 3$, let $\oM_{0,n}$ be the moduli space of stable curves of genus zero with $n$ marked points. Note that $\oM_{0,n}$ is a proper smooth variety of dimension $n-3$, and hence defines an element $[\oM_{0,n}] \in \L^{-(n-3)}$. In this section, we derive the technical input for our inductive formula for the cobordism class $[\oM_{0,n}]$, which will be complete in Section \ref{StringSection}. 

\subsection{Keel's blow-up description of $\oM_{0,n}$} 

We now recall Keel's construction \cite{Keel} of $\oM_{0,n}$ as an iterated blow-up along smooth centers. Consider the diagram
\begin{equation*}
  \begin{tikzcd}[column sep=6em]
    \oM_{0,n+1} \ar[dr,"\pi"'] \ar[r,"{(\pi,\pi_{1,2,3,n+1})}"] 
    & \oM_{0,n} \times \P^{1}
    \ar[d,"p_1"] \\
    & \oM_{0,n}. \ar[ul,bend left=20,"\sigma^i"]
  \end{tikzcd}
\end{equation*}
Here $\pi$ forgets the $(n+1)^{\text{st}}$-marked point, $\sigma^{i}$ is the section corresponding to the $i^{\text{th}}$-marked point, $p_{1}$ is the projection onto the first component, and $\pi_{1,2,3,n+1}$ forgets all markings except those labeled by $1$, $2$, $3$ and $n+1$, where we identify $\oM_{0,4}$ with $\P^{1}$. For $T \subset \{1,2,3,\dots,n\}$ with $\vert{T}\vert \ge 2$, $\vert{T^{c}}\vert \ge 2$ and $\vert{T \cap \{1,2,3\}}\vert \le 1$, let $D^{T}$ be the divisor of $\oM_{0,n}$ with generic element a curve having a single node, and the two components containing markings labeled by the elements of $T$ and $T^{c}$ respectively, so that
\begin{equation*}
D^{T} \cong \oM_{0,\vert{T}\vert+1} \times \oM_{0,\vert{T^{c}}\vert+1}.
\end{equation*}
Since $D^{T} = D^{T^c}$, the condition $\vert{T \cap \{1,2,3\}}\vert \le 1$ ensures that we do not overcount divisors. Denote the map $(\pi,\pi_{1,2,3,n+1})$ by $\pi_{1}$. We embed $D^{T}$ inside $\oM_{0,n} \times \P^{1}$ as 
\begin{equation*}
S_{1}^{T} := (\pi_{1} \circ \sigma^{i})(D^{T})
\end{equation*}
for $i \in T$, which is independent of the choice of $i \in T$. Let $B_{1} = \oM_{0,n} \times \P^{1}$, and let $B_{2}$ be the blow-up of $B_{1}$ along the union of all $S_{1}^{T}$ such that $\vert{T^{c}}\vert = 2$, which are disjoint, so the center of this blow-up is a smooth codimension two subvariety. Following this process, we inductively define a sequence of blow-ups
\begin{equation*}
  \begin{tikzcd}
    B_{k+1} \ar[rrrr,bend right=26,"g_{k+1}"']
    \ar[r,"f_{k+1}"] & B_{k} \ar[r,"f_{k}"] & \dotsm
    \ar[r,"f_{3}"] & B_{2} \ar[r,"f_{2}"] & B_{1},
  \end{tikzcd}
\end{equation*}
where $B_{k+1}$ is the blow-up of $B_{k}$ along the union of the strict transforms (under the composition $B_{k} \xrightarrow{g_{k}} B_{1}$) of all the $S_{1}^{T}$ such that $\vert{T^{c}}\vert = k+1$, which are again disjoint and thus form a smooth codimension two center. We denote these strict transforms by $S_{k}^{T} \subset B_{k}$. 
\begin{thm}[Keel \cite{Keel}] \label{Keel}
For every $k$, the map $\pi_{1}$ factors through $B_{k}$
\begin{equation*}
  \begin{tikzcd}[column sep=5em,row sep=3em]
    \oM_{0,n+1} \ar[dr,"\pi_{1}"'] \ar[r,"\pi_{k}"] & B_{k} \ar[d,"g_{k}"'] \\
    & B_{1}, \ar[ul,bend left=25,"\sigma^{i} \circ p_{1}"] \ar[u,bend
    right=25,"\sigma_{k}^{i}"']
  \end{tikzcd}
\end{equation*}
inducing sections $\sigma_{k}^{1},\dots,\sigma_{k}^{n}$ of $B_{k} \xrightarrow{g_{k}} B_{1}$, with $S_{k}^{T} = \sigma_{k}^{i}(S_{1}^{T})$ for any $i \in T$, and $\pi_{n-2} \colon \oM_{0,n+1} \to B_{n-2}$ is an isomorphism. 
\end{thm}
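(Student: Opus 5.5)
We would prove the theorem by induction on $k$, using the universal property of blow-ups: a morphism $h \colon W \to B_k$ factors (uniquely) through $f_{k+1} \colon B_{k+1} \to B_k$ if and only if the scheme-theoretic inverse image $h^{-1}(\bigcup_{|T^c|=k+1} S_k^T)$ is an effective Cartier divisor on $W$. The base case $k=1$ is the definition of $\pi_1 = (\pi,\pi_{1,2,3,n+1})$. For the sections, we would set $\sigma_k^i := \pi_k \circ \sigma^i$ on $\oM_{0,n}$. This is a lift of $\pi_1\circ\sigma^i$ through $g_k$, so it is a section over its image $\pi_1\sigma^i(\oM_{0,n})$ in $B_1$, which is what the statement is used for. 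The identity $S_k^T = \sigma_k^i(S_1^T)$ for $i \in T$ then follows from two facts: $\sigma_k^i(D^T)$ is irreducible, and its image under $g_k$ is $S_1^T$ while not lying in any exceptional divisor. Hence it is the strict transform.

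\textbf{Inductive step.} Assume $\pi_k$ exists. We claim that, for $|T^c| = k+1$ (so $|T|\ge 2$),
\[
\pi_k^{-1}(S_k^T) = D^{T\cup\{n+1\}} \subset \oM_{0,n+1}
\]
scheme-theoretically. This is a smooth boundary divisor, hence Cartier, so $\pi_k$ lifts to $\pi_{k+1}$. Set-theoretically the claim is a modular computation.
\begin{itemize}
\item A point of $\oM_{0,n+1}$ maps into $S_1^T$ exactly when the underlying $n$-pointed curve lies in $D^T$ and the point $n+1$ sits at the same cross-ratio as the $T$-markings with respect to $1,2,3$.
\item Such points are those in $D^{T\cup\{n+1\}}$, together with the loci $D^{T'\cup\{n+1\}}$ for $T' \supsetneq T$, that is, $|T'^c| \le k$.
\item The loci with $|T'^c|\le k$ are separated from $S_k^T$ by the earlier blow-ups, because the strict transforms were taken.
\end{itemize}
To upgrade this to a scheme-theoretic equality, we would work étale-locally near a general point of $D^{T\cup\{n+1\}}$ using the smoothing coordinate of the node. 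There the two local equations of $S_k^T$ (codimension two) pull back to the node-smoothing parameter times a unit, and to a function vanishing to order $\ge 1$ along the same divisor. Their ideal is therefore locally principal.

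\textbf{Isomorphism at the end.} Once $\pi_{n-2}$ is constructed, it is a proper birational morphism between smooth varieties of the same dimension; it is an isomorphism over the open locus $\M_{0,n}\times(\P^1\setminus\{0,1,\infty\})$ minus the diagonals. By Zariski's Main Theorem it suffices to show that $\pi_{n-2}$ is quasi-finite, i.e.\ contracts no curve. Any curve contracted by $\pi_1$ lies in a fiber of $\pi$, so it is a component $C'$ of a stable $n$-pointed curve $C$, carrying the point $n+1$, other than the component that the $1,2,3$-cross-ratio sees isomorphically. Such a component is cut off by a node that separates a set $T\cup\{n+1\}$ with $|T^c| \le n-2$ markings on the other side. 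It is therefore a general fiber of $D^{T\cup\{n+1\}} \to $ its image, and that divisor maps onto the exceptional divisor of $f_{|T^c|}$. The exceptional divisor is a $\P^1$-bundle over $S^T$, and we would check that $C'$ maps isomorphically onto a ruling. A dimension count should suffice for this: both are $\P^1$-bundles over $D^T$.

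\textbf{Main obstacle.} We expect the hardest step to be the scheme-theoretic (not merely set-theoretic) identification of $\pi_k^{-1}(S_k^T)$, including the verification that the centers $S_k^T$ with fixed $|T^c|=k+1$ are disjoint and smooth of codimension two in $B_k$. We would handle this by explicit local coordinates on $\oM_{0,n+1}$ near boundary strata, given by cross-ratios and node-smoothing parameters. In these coordinates we would track how each earlier blow-up separates $S^T$ from $S^{T'}$ for $T\subsetneq T'$.
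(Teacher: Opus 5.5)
The paper gives no proof of this statement; it is quoted from Keel. Your outline therefore has to stand on its own. The architecture is sound: lift through each blow-up via the universal property, then finish with Zariski's Main Theorem. Your strict-transform argument for $S_k^T=\sigma_k^i(S_1^T)$ is also fine. But the two steps that carry the content are not established.

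\textbf{The inductive claim.} The first gap is the claim $\pi_k^{-1}(S_k^T)=D^{T\cup\{n+1\}}$. Your \'etale-local check near a \emph{general} point of $D^{T\cup\{n+1\}}$ cannot see the earlier blow-ups: near the image of such a point $g_k$ is an isomorphism, so the identical computation applies to $\pi_1$ and $S_1^T$. Yet for $|T^c|\ge 3$ the preimage $\pi_1^{-1}(S_1^T)$ is not even of pure codimension one. It also contains the codimension-two loci $D^T\cap D^{T'\cup\{n+1\}}$ for $T\subset T'$, $T\neq T'$. These loci, not the whole divisors $D^{T'\cup\{n+1\}}$, are the extra points in your second bullet. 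So invertibility of the pulled-back ideal must be proved at every point. ``Separated because strict transforms were taken'' is an assertion, not an argument; indeed $S_k^T$ still meets the exceptional divisors $E^{T'}$.

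A clean fix uses the Keel input this paper imports. By Lemma \ref{CompleteIntersection}, $\pi_k^{-1}(S_k^T)$ is the scheme-theoretic intersection of the Cartier divisors $\pi_k^*P_k^T$ and $\pi_k^*\Sigma_k^i$. Use $P_k^T=f_k^*P_{k-1}^T$ and $\Sigma_k^i=f_k^*\Sigma_{k-1}^i-\sum E_k^V$ (as in the proof of Lemma \ref{Conormals}), together with the inductive hypothesis $\pi_k^*E_k^V=D^{V\cup\{n+1\}}$. These give
$\pi_k^*P_k^T=D^T+D^{T\cup\{n+1\}}$ and $\pi_k^*\Sigma_k^i=D^{\{i,n+1\}}+\sum D^{V\cup\{n+1\}}$,
the sum running over admissible $V\ni i$ with $|V|\ge 2$ and $|V^c|\ge k+1$. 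Both contain $D^{T\cup\{n+1\}}$ once, and the residual divisors are disjoint. Indeed, for $V\neq T$ with $|V^c|\ge |T^c|$ the partitions $T\mid T^c\cup\{n+1\}$ and $V\cup\{n+1\}\mid V^c$ cross:
\begin{itemize}
\item they share $i$, and $n+1\notin T$;
\item $T\not\subseteq V$ by the size condition;
\item $T\cup V\neq\{1,\dots,n\}$, since each contains at most one of $1,2,3$.
\end{itemize}
Hence locally the ideal is $(fg_1,fg_2)=(f)$, with $f$ an equation of $D^{T\cup\{n+1\}}$ and $g_1,g_2$ generating the unit ideal. The restriction $|V^c|\ge k+1$ is exactly where the earlier blow-ups enter.

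\textbf{The isomorphism step.} Here $D^{T\cup\{n+1\}}\cong\oM_{0,|T|+2}\times\oM_{0,|T^c|+1}\to D^T$ is the universal curve over the first factor, not a $\P^1$-bundle. Moreover $C'$ lies over an arbitrary, possibly very degenerate, point of $D^T$, so it is not a general fiber. A dimension count only shows that $D^{T\cup\{n+1\}}\to E^T_{|T^c|}$ is generically finite, which says nothing about a given special component. In fact the fiber components \emph{not} passing through the branch point $\bullet$ are contracted by $\pi_{|T^c|}$; only later blow-ups separate them.

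What saves $C'$ is that it contains $\bullet$. Since $\pi_{|T^c|}^{-1}(E^T_{|T^c|})=D^{T\cup\{n+1\}}$, the degree of $\pi_{|T^c|}^*\Oo(E^T_{|T^c|})$ on $C'$ equals $\deg(L_a^\vee\otimes L_b^\vee)|_{C'}=-1$. Here $L_a,L_b$ are the branch cotangent lines on $D^{T\cup\{n+1\}}\subset\oM_{0,n+1}$. The value $-1$ holds because $\psi_\bullet$ has degree one on the component through $\bullet$ of a fiber of the map forgetting $n+1$, while $L_b$ is constant along $C'$. So $C'$ is not contracted by $\pi_{|T^c|}$, hence not by $\pi_{n-2}$.

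Alternatively, since $B_{n-2}$ is smooth, the exceptional locus of the birational morphism $\pi_{n-2}$ has pure codimension one, so only divisors need checking. For divisors your dimension count does work: $D^{T\cup\{n+1\}}$ surjects onto $E^T_{|T^c|}$, and the other boundary divisors are not contracted even by $\pi_1$.
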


\subsection{Normal bundles of Keel's blow-up centers} \label{NormalBundles}

In this section, we derive a formula for the normal bundle $N_{S_{k}^{T}/B_{k}}$ of the blow-up center $S_{k}^{T}$ inside $B_{k}$, where $\vert{T}\vert = k+1$, in terms of tautological line bundles. 

For each $k \ge 2$, let $P_{k}^{T}$ be the strict transform of $P_{1}^{T} := D^{T} \times \P^{1} \subset B_{1}$ under the composition of blow-ups $g_{k}$. For $1 \le i \le n$, let $\Sigma_{k}^{i}$ be the strict transform of $\Sigma_{1}^{i} := (\pi_{1} \circ \sigma^{i})(\oM_{0,n}) \subset B_{1}$ under $g_k$.
\begin{lem}[Keel \cite{Keel}] \label{CompleteIntersection}
For every $k \ge 1$ and $i \in T$, we have that $S_{k}^{T}$ is the complete intersection of the divisors $P_{k}^{T}$ and $\Sigma_{k}^{i}$ of $B_{k}$. 
\end{lem}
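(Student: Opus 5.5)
The plan is to argue by induction on $k$, tracking how the two divisors $P_k^T$ and $\Sigma_k^i$ and their intersection change under each blow-up $f_{k+1}$.

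\emph{Base case $k=1$.} Here $B_1=\oM_{0,n}\times\P^1$ and $P_1^T=D^T\times\P^1=p_1^{-1}(D^T)$, which is a smooth divisor. The divisor $\Sigma_1^i$ is the graph of the morphism $\pi_{1,2,3,n+1}\circ\sigma^i\colon\oM_{0,n}\to\P^1$, so $p_1$ restricts to an isomorphism $\Sigma_1^i\cong\oM_{0,n}$. Under this isomorphism the scheme-theoretic intersection $P_1^T\cap\Sigma_1^i$ becomes the restriction of $p_1^{*}\Oo(D^T)$ to a section, which is $D^T$ itself, reduced and smooth. A divisor meeting a section of $p_1$ in a reduced smooth divisor of that section meets it transversally. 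Hence $P_1^T\cap\Sigma_1^i$ is a transverse complete intersection, and it equals $(\pi_1\circ\sigma^i)(D^T)=S_1^T$ by definition.

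\emph{Inductive step.} Assume $S_k^{T}=P_k^{T}\cap\Sigma_k^i$ transversally for all relevant $T$ and all $i\in T$. Let $C=\bigsqcup S_k^{T'}$ be the center of $f_{k+1}$, with $|T'^c|=k+1$, and fix $T$ not among the $T'$. Away from $C$ the map $f_{k+1}$ is an isomorphism, so we only need to work near points of $S_k^T\cap S_k^{T'}$. At such a point both $S_k^T$ and $S_k^{T'}$ sit inside the same section $\Sigma_k^i$ whenever $i\in T\cap T'$. Using the modular description through $\pi_k$ and the intersection pattern of boundary divisors ($D^T\cap D^{T'}\neq\emptyset$ only if $T,T'$ are nested or disjoint, up to complements), I would check two facts: $C\subset\Sigma_k^i$ when $i\in T'$, and $C$ meets $P_k^T$ transversally. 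One then writes local coordinates $x,y,z,\dots$ on $B_k$ adapted to the configuration: for instance $\Sigma=\{x=0\}$, $C=\{x=z=0\}$, $P=\{y=0\}$, or the analogous chart when $i\notin T'$ and $C\not\subset\Sigma$. A direct chart computation on the blow-up shows that the strict transforms $\widetilde P$ and $\widetilde\Sigma$ are again transverse, and that their intersection equals the strict transform of $P\cap\Sigma$. This uses the standard fact that when the center is contained in $\Sigma$, the strict transform of a divisor is its total transform minus the exceptional divisor. The argument gives $S_{k+1}^T=P_{k+1}^T\cap\Sigma_{k+1}^i$.

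\emph{Main obstacle.} The hard step is the precise local geometry in the inductive step. We must show that the previously blown-up centers have made the configuration $\{P_k^T,\Sigma_k^j,E\}$, with $E$ the exceptional divisors, normal crossing near $S_k^T$. In particular we must rule out $C$ being tangent to, or non-transversally contained in, $P_k^T$; this is where the ordering by increasing $|T'^c|$ is essential. If the local bookkeeping becomes unwieldy, I would instead use Theorem \ref{Keel} and argue on $\oM_{0,n+1}$. There the pullbacks of $\Sigma^i$ and $P^T$ are sums of boundary divisors, and $\Sigma^i$ becomes $D^{\{i,n+1\}}$. Since the boundary of $\oM_{0,n+1}$ is simple normal crossings, transversality holds upstairs. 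One then pushes this down via $\pi_k$, using that $\pi_k$ is an isomorphism near the generic points of the relevant strata.
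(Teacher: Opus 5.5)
The paper gives no proof of this lemma; it is quoted from Keel. Your base case is correct. So is your chart computation in the configuration $C\subset\Sigma_k^i$ with $C$ transverse to $P_k^T$.

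The gap is that you never establish that this is the configuration. What is needed is that the only components of the center meeting $S_k^T$ are the $S_k^{T'}$ with $T\subset T'$. Then $C\subset\Sigma_k^i$ for \emph{every} $i\in T$, and your second case ($i\notin T'$, $C\not\subset\Sigma_k^i$) never arises. This is the statement the paper quotes from Keel's Lemma 4.4 in the proof of Lemma~\ref{Conormals}. It does not follow from the intersection pattern of boundary divisors on $\oM_{0,n}$, which only gives $T\subset T'$ or $T\cap T'=\emptyset$.

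The disjoint case genuinely occurs before enough blow-ups have been made. For $n=7$, the loci $S_1^{\{1,4\}}$ and $S_1^{\{5,6,7\}}$ meet over $D^{\{1,4\}}\cap D^{\{5,6,7\}}\cap D^{\{1,4,5,6,7\}}$, where $\Sigma_1^1$ and $\Sigma_1^5$ meet. They become disjoint only after the first blow-up, along $S_1^{\{1,4,5,6,7\}}$. Excluding this at stage $k$ therefore needs its own induction. The ``analogous chart'' you gesture at is not a normal-crossing picture you can write down without that analysis.

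By contrast, transversality of $C$ and $P_k^T$ is easy. The loci $C$ and $S_k^T$ are transverse inside $\Sigma_k^i\cong\oM_{0,n}$ because $D^{T'}$ and $D^T$ are, and $P_k^T$ is transverse to $\Sigma_k^i$ by induction.

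Your fallback also fails as stated. The map $\pi_k$ factors through the blow-up along $S^T$ at stage $|T^c|-1\ge k$. So it is an isomorphism near no point of $S_k^T$, and transversality at every point of $S_k^T$ cannot be pushed down from the normal-crossing boundary of $\oM_{0,n+1}$ by a genericity argument.

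The cleanest repair is to run your base-case computation directly on $B_k$, with no local analysis.
\begin{enumerate}
\item For $k\le|T^c|-1$ one has $P_k^T=g_k^{*}P_1^T=g_k^{*}p_1^{*}D^T$ as Cartier divisors. Each earlier center $S_j^{T'}$ has $T'\neq T$ and is mapped by $p_1\circ g_j$ onto $D^{T'}\not\subset D^T$. So it is not contained in $P_j^T$, and the strict transform of $P_j^T$ equals its total transform.
\item By Theorem~\ref{Keel}, which the paper states before the lemma, $\sigma_k^i:=\pi_k\circ\sigma^i\colon\oM_{0,n}\to B_k$ is a section of $p_1\circ g_k$, hence a closed embedding. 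Its image is the strict transform $\Sigma_k^i$, and $S_k^T=\sigma_k^i(D^T)$ for $i\in T$.
\end{enumerate}
Using $g_k\circ\pi_k=\pi_1$ and $p_1\circ\pi_1=\pi$, we get $(\sigma_k^i)^{*}P_k^T=(p_1\circ g_k\circ\sigma_k^i)^{*}D^T=(\pi\circ\sigma^i)^{*}D^T=D^T$. Hence $P_k^T\cap\Sigma_k^i=\sigma_k^i(D^T)=S_k^T$ scheme-theoretically. This is smooth of codimension two in the smooth variety $B_k$, so the intersection is a transverse complete intersection. This argument never needs to know which centers meet which.
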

For each $1 \le i \le n$, let $L_{i} = (\sigma^{i})^{*}\omega_{\oM_{0,n+1}/\oM_{0,n}}$ denote the $i^{\text{th}}$ tautological line bundle on $\oM_{0,n}$, where $\omega_{\oM_{0,n+1}/\oM_{0,n}}$ is the relative dualizing sheaf. In particular, the fiber of $L_{i}$ over a stable curve $C$, with marked points $x_{1},\dots,x_{n}$, is the cotangent space $T_{x_{i}}^{*}C$. Recall that on $\oM_{0,n+1}$, we have the relation
\begin{equation} \label{ForgetfulRelation}
L_{i} = \pi^{*}L_{i} \otimes \Oo_{\oM_{0,n+1}}(D^{\{i,n+1\}}).
\end{equation}
By applying Equation \ref{ForgetfulRelation} inductively, using that $\oM_{0,3}$ is a point, we obtain the following well-known description of the tautological line bundles on $\oM_{0,n}$.
\begin{lem}
  \label{TautDescription}
  For fixed distinct $1 \le i,j,k \le n$, we have that
  \begin{equation*}
    L_{i} = \Oo_{\oM_{0,n}} \biggl( \sum_{\substack{i \in T \\ j,k \in T^{c}}}
      D^{T} \biggr) .
  \end{equation*}
\end{lem}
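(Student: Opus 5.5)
We argue by induction on $n$, using only Equation \ref{ForgetfulRelation} and the behaviour of boundary divisors under the forgetful map $\pi \colon \oM_{0,n+1} \to \oM_{0,n}$.

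\emph{Base case and a reduction.} For $n = 3$, $\oM_{0,3}$ is a point, so $L_i$ is trivial. There is also no subset $T$ with $i \in T$, $j,k \in T^c$ and $\vert T\vert, \vert T^c\vert \ge 2$, so the right-hand side is $\Oo$ as well. For the inductive step, assume the lemma on $\oM_{0,n}$ for all choices of distinct indices, and fix distinct $i,j,k \le n+1$. The symmetric group $S_{n+1}$ acts on $\oM_{0,n+1}$ by relabeling, and this action permutes the $L_\ell$ and the $D^T$ compatibly. Since $n+1 \ge 4$, we may therefore relabel so that $n+1 \notin \{i,j,k\}$. In this case Equation \ref{ForgetfulRelation} applies to $L_i$ and the fixed index $i$.

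\emph{Inductive step.} Equation \ref{ForgetfulRelation} gives $L_i = \pi^*L_i \otimes \Oo(D^{\{i,n+1\}})$. By the inductive hypothesis on $\oM_{0,n}$,
\begin{equation*}
\pi^*L_i = \Oo\Bigl(\sum_{\substack{T \subset \{1,\dots,n\},\ i\in T\\ j,k\notin T}} \pi^*D^T\Bigr).
\end{equation*}
The key geometric input is the standard fact that $\pi^*D^T = D^{T} + D^{T\cup\{n+1\}}$ as divisors on $\oM_{0,n+1}$, where $T$ is now viewed inside $\{1,\dots,n+1\}$. This holds because $\pi^{-1}(D^T)$ consists of curves where the extra point lies on one of the two sides of the node. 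Its two components are distinguished by which side carries $n+1$. The pullback is reduced because $\pi$ is flat with reduced fibres and is smooth at the generic point of each component. I expect this pullback formula to be the only step needing justification; one can cite it, or verify it on the open locus where $\pi$ is smooth and use that the two components have codimension one.

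\emph{Matching the index sets.} Substituting, $L_i$ is $\Oo$ of the sum of $D^{T'}$ over all $T' \subset \{1,\dots,n+1\}$ with $i \in T'$ and $j,k \notin T'$ such that $T' \cap \{1,\dots,n\}$ has size at least $2$, plus the term $D^{\{i,n+1\}}$. We check this is exactly the index set in the lemma for $\oM_{0,n+1}$:
\begin{itemize}
\item Every admissible $T'$ satisfies $\vert T'\vert \ge 2$. Its complement contains $j$ and $k$, so $\vert T'^c\vert \ge 2$ automatically.
\item If $\vert T'\cap\{1,\dots,n\}\vert \ge 2$, then $T = T' \cap \{1,\dots,n\}$ is admissible on $\oM_{0,n}$, since its complement in $\{1,\dots,n\}$ contains $j$ and $k$.
\item Otherwise $T' \cap \{1,\dots,n\} = \{i\}$, which forces $T' = \{i,n+1\}$. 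This is precisely the extra divisor $D^{\{i,n+1\}}$.
\end{itemize}
Hence the two index sets coincide bijectively, which completes the induction.
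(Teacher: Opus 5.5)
Your proposal is correct and takes the same route as the paper, which only says that the lemma follows by applying Equation~\ref{ForgetfulRelation} inductively, starting from the fact that $\oM_{0,3}$ is a point. You fill in the details the paper leaves implicit: the relabeling so that $n+1 \notin \{i,j,k\}$, the pullback formula $\pi^{*}D^{T} = D^{T} + D^{T \cup \{n+1\}}$, and the bijection between the two index sets; each of these steps is sound.
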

Identifying $D^{T}$ with $\oM_{0,\vert{T}\vert+1} \times \oM_{0,\vert{T^{c}}\vert+1}$, let $\text{pr}_{1} \colon D^{T} \to \oM_{0,\vert{T}\vert+1}$ and $\text{pr}_{2} \colon D^{T} \to \oM_{0,\vert{T^{c}}\vert+1}$ be the two projections, and denote by $L_{\bullet}$ the tautological line bundle on $\oM_{0,\vert{T}\vert+1}$ (or $\oM_{0,\vert{T^{c}}\vert+1}$) corresponding to the branch point. Then, letting $L_{a} = \text{pr}_{1}^{*}L_{\bullet}$ and $L_{b} = \text{pr}_{2}^{*}L_{\bullet}$, it is a well-known fact that
\begin{equation*}
N_{D^{T}/\oM_{0,n}} = L_{a}^{\vee} \otimes L_{b}^{\vee}.
\end{equation*}
\begin{lem} \label{Conormals}
Let $1 \le k \le n-3$ and $\vert{T^{c}}\vert = k+1$. Under the isomorphism $S_{k}^{T} \cong D^{T}$ given by the section $\sigma_{k}^{i}$ for any $i \in T$, we have that
\begin{equation*}
N_{D^{T}/B_{k}} = (L_{a}^{\vee} \otimes L_{b}^{\vee}) \oplus L_{b}^{\vee}.
\end{equation*}
\end{lem}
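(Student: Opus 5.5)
The plan is to use Lemma \ref{CompleteIntersection}, which says that $S_{k}^{T}$ is the transverse complete intersection of the two divisors $P_{k}^{T}$ and $\Sigma_{k}^{i}$ of $B_{k}$ for any $i \in T$. For such an intersection the normal bundle splits:
\begin{equation*}
N_{S_{k}^{T}/B_{k}} = \Oo_{B_{k}}(P_{k}^{T})\vert_{S_{k}^{T}} \oplus \Oo_{B_{k}}(\Sigma_{k}^{i})\vert_{S_{k}^{T}} = N_{S_{k}^{T}/\Sigma_{k}^{i}} \oplus N_{S_{k}^{T}/P_{k}^{T}}.
\end{equation*}
So it suffices to identify the two summands separately, under the identification $S_{k}^{T} \cong D^{T}$ given by $\sigma_{k}^{i}$.

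For the first summand, I would use that $\Sigma_{k}^{i} = \sigma_{k}^{i}(B_{1})$ restricted over $\oM_{0,n}$. More precisely, $\Sigma_{k}^{i}$ is the image of the section $\pi_{k}\circ\sigma^{i}$ of $B_{k} \to \oM_{0,n}$, so it is isomorphic to $\oM_{0,n}$. By Theorem \ref{Keel}, $S_{k}^{T} = \sigma_{k}^{i}(S_{1}^{T})$, and under this isomorphism $S_{k}^{T} \subset \Sigma_{k}^{i}$ corresponds to $D^{T} \subset \oM_{0,n}$. Hence
\begin{equation*}
N_{S_{k}^{T}/\Sigma_{k}^{i}} \cong N_{D^{T}/\oM_{0,n}} = L_{a}^{\vee}\otimes L_{b}^{\vee},
\end{equation*}
by the well-known formula quoted just before the lemma. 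The one point to check is that the strict transforms of $\Sigma_{1}^{i}$ stay isomorphic to $\oM_{0,n}$. This holds because each blow-up center meets $\Sigma^{i}$ in a Cartier divisor of $\Sigma^{i}$, namely $S^{T'} \subset \Sigma^{i}$ for $i \in T'$, or not at all.

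The second summand, $N_{S_{k}^{T}/P_{k}^{T}}$, is where the real work lies. I would describe $P_{k}^{T}$ near $S_{k}^{T}$ modularly. Over the interior of $D^{T}$, the fiber of $P_{1}^{T} = D^{T}\times\P^{1} \to D^{T}$ is the image under $\pi_{1}$ of the locus where the $(n+1)$-st point moves on a nodal curve $C_{T}\cup C_{T^{c}}$. The blow-ups at stages $<k$ are along $S^{T'}$ with $|T'^{c}|\le k$. I expect these to separate $P^{T}$ into the piece where the $(n+1)$-st point lies on the $C_{T^{c}}$ side. On that piece, the $C_{T}$ side, and with it all points of $T$, is contracted to the single point $S_{k}^{T}$, sitting where the node is. Thus near $S_{k}^{T}$ I would identify $P_{k}^{T}$ with (the pullback to $D^{T}$ of) the universal curve $\oM_{0,|T^{c}|+2} \to \oM_{0,|T^{c}|+1}$ over the $\text{pr}_{2}$-factor. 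Under this identification $S_{k}^{T}$ becomes the section $\sigma^{\bullet}$ of the node-branch marking. The normal bundle of a section $\sigma^{j}$ in the universal curve is the dual of $(\sigma^{j})^{*}\omega = L_{j}$, i.e.\ $L_{j}^{\vee}$. Pulling back by $\text{pr}_{2}$ then gives $N_{S_{k}^{T}/P_{k}^{T}} = L_{b}^{\vee}$.

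The main obstacle is making this identification of $P_{k}^{T}$ rigorous, including over the boundary of $D^{T}$ and not just generically. My first attempt would avoid a direct construction and instead compute the line bundle $\Oo(\Sigma_{k}^{i})\vert_{S_{k}^{T}}$ by tracking divisor classes through the blow-ups. On $B_{1}$, $\Oo(\Sigma_{1}^{i})$ restricts to $S_{1}^{T}$ as the pullback of $\Oo_{\P^{1}}(1)$ at the appropriate point. Each blow-up of a center $S^{T'} \subset \Sigma^{i}$ subtracts the exceptional divisor, whose restriction to $S^{T}$ is $\Oo(D^{T}\cap D^{T'})$-type boundary terms. I would then match the resulting sum of boundary divisors, restricted to $D^{T}$ via $\text{pr}_{2}$, against Lemma \ref{TautDescription} for $L_{\bullet}$ on $\oM_{0,|T^{c}|+1}$, choosing the two auxiliary markings from $T^{c}\cap\{1,2,3\}$. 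This yields $\Oo(\Sigma_{k}^{i})\vert_{S_{k}^{T}} = L_{b}^{\vee}$, completing the proof.
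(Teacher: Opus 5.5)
Your splitting $N_{S_k^T/B_k}=\Oo(P_k^T)\vert_{S_k^T}\oplus\Oo(\Sigma_k^i)\vert_{S_k^T}$ via Lemma \ref{CompleteIntersection} matches the paper. Your first summand is also fine: identifying it with $N_{S_k^T/\Sigma_k^i}\cong N_{D^T/\oM_{0,n}}$ through the closed embedding $\pi_k\circ\sigma^i$ is a little more direct than the paper's reduction to $P_1^T=D^T\times\P^1$ using $P_k^T=f_k^*P_{k-1}^T$.

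The gap is in the second summand. The universal-curve picture predicts the right answer. To use it, however, you would have to show that $\pi_k$ maps a neighbourhood of the $\sigma^{\bullet}$-locus in the boundary divisor $\oM_{0,|T|+1}\times\oM_{0,|T^c|+2}$ of $\oM_{0,n+1}$ isomorphically onto a neighbourhood of $S_k^T$ in $P_k^T$. That requires controlling every later blow-up centre near $S^T$, and you leave it open.

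The divisor computation you fall back on starts from a wrong input. $\Sigma_1^i$ is the graph of $h=\pi_{1,2,3,n+1}\circ\sigma^i\colon\oM_{0,n}\to\P^1$, so $\Oo(\Sigma_1^i)\vert_{S_1^T}=(h\vert_{D^T})^*T_{\P^1}=(h\vert_{D^T})^*\Oo_{\P^1}(2)$, not a pullback of $\Oo_{\P^1}(1)$. If $T$ contains one of $1,2,3$, you may take $i$ to be that marking; then $h\vert_{D^T}$ is constant and the degree is invisible. But if $T\cap\{1,2,3\}=\emptyset$, then $h\vert_{D^T}=\pi_{1,2,3,\bullet}\circ\text{pr}_2$ is non-constant and the degree matters. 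Take $n=5$, $k=2$, $T=\{4,5\}$: here $D^T\cong\P^1$ and there are three exceptional corrections, from $V=\{j,4,5\}$ with $j=1,2,3$. Your recipe gives a line bundle of degree $1-3=-2$, whereas $L_b^\vee$ has degree $-1=2-3$.

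The degree $2$ is exactly what makes the comparison with Lemma \ref{TautDescription} work, so the cases $|T\cap\{1,2,3\}|=1$ and $|T\cap\{1,2,3\}|=0$ must be treated separately, as the paper does. In the second case, write $\Oo_{\P^1}(2)=\Oo(D^{\{1,i\}}+D^{\{2,i\}})$ on $\oM_{0,4}$. Its pullback restricted to $D^T$ is $\sum_{V\supset T,\,1\in V}D^T\cap D^V+\sum_{V\supset T,\,2\in V}D^T\cap D^V$. Since $|V\cap\{1,2,3\}|\le1$, subtracting the exceptional terms $\sum_{V\supset T}D^T\cap D^V$ leaves $\Oo_{D^T}\bigl(-\sum_{V\supset T,\,1,2\in V^c}D^T\cap D^V\bigr)=\text{pr}_2^*L_\bullet^\vee=L_b^\vee$, by Lemma \ref{TautDescription} with auxiliary markings $1,2$.

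With a single copy $\Oo(D^{\{1,i\}})$ you would instead be left with $-\sum_{\bullet\in W,\,1\in W^c}D^W$ on the $T^c$ factor. This gives $L_\bullet^\vee\otimes\pi_{1,2,3,\bullet}^*\Oo_{\P^1}(-1)$, and no choice of auxiliary markings removes the extra factor.

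You should also justify, rather than assert, that the exceptional divisors $E^V$ contribute to $\Oo(\Sigma_k^i)\vert_{S_k^T}$ only for $V\supset T$, and exactly by the transforms of $D^T\cap D^V$. The paper gets this from Keel's Lemma 4.4 together with the fact that these intersections are Cartier divisors in $S^T$, so the earlier blow-ups leave $S^T$ unchanged.
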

\begin{rem}
More precisely, by identifying $S_{1}^{T}$ with $D^{T}$ via the section $\pi_{1} \circ \sigma^{i}$ for any $i \in T$, we have that
\begin{equation*}
(\sigma_{k}^{i} \vert_{D^{T}})^{*}N_{S_{k}^{T}/B_{k}} = (L_{a}^{\vee} \otimes L_{b}^{\vee}) \oplus L_{b}^{\vee},
\end{equation*}
noting that $\sigma_{k}^{i} \vert_{D^{T}}$ is an isomorphism. We will generally suppress the pullback by this isomorphism and write $D^{T}$ instead of $S_{k}^{T}$ for ease of notation. 
\end{rem}
\begin{proof}
By Lemma \ref{CompleteIntersection}, it follows that
\begin{equation*}
N_{S_{k}^{T}/B_{k}} = \Oo_{S_{k}^T}(P_{k}^{T}) \oplus \Oo_{S_{k}^T}(\Sigma_{k}^{i})
\end{equation*}
for $i \in T$. Since $\vert{T^{c}}\vert \neq k$ and $\Sigma_{k-1}^{i}$ contains $S_{k-1}^{V}$ for every $V$ containing $i$ with $\vert{V^c}\vert = k$, we know (see \cite[Lemma 4.3]{Keel} and the succeeding Claim in \cite{Keel}) that
\begin{equation*}
P_{k}^{T} = f_{k}^{*}P_{k-1}^{T} \hspace{0.5cm} \text{ and } \hspace{0.5cm} \Sigma_{k}^{i} = f_{k}^{*}\Sigma_{k-1}^{i}-\sum_{\substack{i \in V \\ \vert{V^{c}}\vert = k}} E_{k}^{V},
\end{equation*}
where the exceptional divisor $E_{k}^{V} =
f_{k}^{-1}(S_{k-1}^{V})$. Hence, it follows that
\begin{align*}
N_{S_{k}^{T}/B_{k}} &= \Oo_{S_{k}^{T}}(f_{k}^{*}P_{k-1}^{T}) \oplus
  \Oo_{S_{k}^{T}} \biggl(f_{k}^{*}\Sigma_{k-1}^{i}-\sum_{\substack{i \in V \\
  \vert{V^{c}}\vert = k}} E_{k}^{V} \biggr) \\
&= \Oo_{S_{k}^{T}}(f_{k}^{*}P_{k-1}^{T}) \oplus \Oo_{S_{k}^{T}} \biggl(
  f_{k}^{*}\Sigma_{k-1}^{i}-\sum_{\substack{i \in V \\ \vert{V^{c}}\vert = k}} S_{k}^{T}
  \cap E_{k}^{V} \biggr).
\end{align*}
However, note that each $S_{k}^{T} \cap E_{k}^{V}$ is precisely the exceptional divisor of the blow-up of $S_{k-1}^{T}$ along the intersection $S_{k-1}^{T} \cap S_{k-1}^{V}$ (see \cite[Proposition IV-21]{EisenbudHarris}). But we know by \cite[Lemma 4.4]{Keel} that $S_{k-1}^{T}$ and $S_{k-1}^{V}$ have non-empty intersection if and only if $T$ is a subset of $V$, in which case they intersect in a smooth Cartier divisor of each. So, if $T \not\subset V$, then $S_{k}^{T} \cap E_{k}^{V} = \emptyset$, and otherwise, we have that 
\begin{equation*}
S_{k}^{T} \cap E_{k}^{V} = (f_{k} \vert_{S_{k}^{T}})^{*}(S_{k-1}^{T} \cap S_{k-1}^{V})
\end{equation*}
as divisors of $S_{k}^{T}$, and $f_{k} \vert_{S_{k}^{T}} \colon S_{k}^{T} \to S_{k-1}^{T}$ is an isomorphism. Hence, it follows that
\begin{equation*}
N_{S_{k}^{T}/B_{k}} = (f_{k} \vert_{S_{k}^T})^{*}\Oo_{S_{k-1}^{T}}(P_{k-1}^{T}) \oplus (f_{k} \vert_{S_k^{T}})^{*} \Oo_{S_{k-1}^{T}} \biggl(\Sigma_{k-1}^{i}-\sum_{\substack{V \supset T \\ \vert{V^{c}}\vert = k}} S_{k-1}^{T} \cap S_{k-1}^{V}\biggr).
\end{equation*}
Repeating the above procedure, and using that $S_{k-1}^{T}$ and $S_{k-1}^{V}$ for $V \supset T$ and $\vert{V^c}\vert = k$ intersect the centers of the blow-up $f_{k-1}$ transversely in Cartier divisors (if the intersection is non-empty) so that
\begin{equation*}
S_{k-1}^{T} \cap S_{k-1}^{V} = (f_{k-1} \vert_{S_{k-1}^{T}})^{*}(S_{k-2}^{T} \cap S_{k-2}^{V}),
\end{equation*}
i.e. the intersection of their strict transforms is the strict transform of their intersection, we get the equality
\begin{equation*}
N_{S_{k}^{T}/B_{k}} = (f_{k-1} \circ f_{k} \vert_{S_{k}^T})^{*}\Oo_{S_{k-2}^{T}}(P_{k-2}^{T}) \oplus (f_{k-1} \circ f_{k} \vert_{S_k^{T}})^{*} \Oo_{S_{k-2}^{T}} \biggl(\Sigma_{k-2}^{i}-\sum_{\substack{V \supset T \\ k-1 \le \vert{V^{c}}\vert \le k}} S_{k-2}^{T} \cap S_{k-2}^{V}\biggr).
\end{equation*}
Continuing this process, it follows that
\begin{equation*}
N_{S_{k}^{T}/B_{k}} = (g_{k} \vert_{S_{k}^{T}})^{*} \Oo_{S_{1}^{T}}(P_{1}^{T}) \oplus (g_{k} \vert_{S_{k}^{T}})^{*} \Oo_{S_{1}^{T}}\biggl(\Sigma_{1}^{i}-\sum_{V \supset T} S_{1}^{T} \cap S_{1}^{V}\biggr).
\end{equation*}
The normal bundle of $P_{1}^{T} = D^{T} \times \P^{1}$ inside $B_{1} = \oM_{0,n} \times \P^{1}$ is $\Oo_{P_{1}^{T}}(P_{1}^{T}) = \text{pr}_{1}^{*} (L_{a}^{\vee}\otimes L_{b}^{\vee})$. Restricting to $S_{1}^{T}$, which we identify with $D^{T}$, we have that $\Oo_{S_{1}^{T}}(P_{1}^{T}) = L_{a}^{\vee}\otimes L_{b}^{\vee}$. We now analyze the second summand. 

\paragraph{Case 1.} Assume that $\vert{T \cap \{1,2,3\}}\vert = 1$. Observe that $S_{1}^{T} = (\pi_{1} \circ \sigma^{i})(D^{T}) = D^{T} \times \text{pt}$, so $\Oo_{S_{1}^{T}}(\Sigma_{1}^{i}) = \Oo_{S_{1}^{T}}$. Identifying $S_{1}^{T}$ and $S_{1}^{V}$ with $D^{T}$ and $D^{V}$ respectively, we have
\begin{equation*}
\Oo_{S_{1}^{T}}\biggl(\Sigma_{1}^{i}-\sum_{V \supset T} S_{1}^{T} \cap S_{1}^{V}\biggr) = \Oo_{D^T}\biggl(-\sum_{V \supset T} D^{T} \cap D^{V}\biggr).
\end{equation*}
We can write $D^{T} \cap D^{V} = \oM_{0,\vert{T}\vert+1} \times D_{\vert{T^{c}}\vert+1}^{(V \setminus T) \cup \{\bullet\}}$, where $D_{\vert{T^{c}}\vert+1}^{(V \setminus T) \cup \{\bullet\}}$ denotes the corresponding divisor $D^{(V \setminus T) \cup \{\bullet\}}$ of $\oM_{0,\vert{T^{c}}\vert+1}$, and $\bullet$ is the branch point. Here we view $\oM_{0,\vert{T^{c}}\vert+1}$ as having marked points labeled by the elements of $T^{c} \cup \{\bullet\}$, and note that $((V \setminus T) \cup \{\bullet\})^{c} = V^{c}$. Assume, without loss of generality, that $1 \in T$. Then, since $T \subset V$ and $\vert{V \cap \{1,2,3\}}\vert \le 1$, we have that $2,3 \in V^{c}$, and so by re-labeling we get
\begin{equation*}
\Oo_{\oM_{0,\vert{T^{c}}\vert+1}}\biggl(\sum_{V \supset T} D_{\vert{T^{c}}\vert+1}^{(V \setminus T) \cup \{\bullet\}}\biggr) = \Oo_{\oM_{0,\vert{T^{c}}\vert+1}}\biggl(\sum_{\substack{\bullet \in W \\ 2,3 \in W^{c}}} D_{\vert{T^{c}}\vert+1}^{W}\biggr) = L_{\bullet}
\end{equation*}
by Lemma \ref{TautDescription}. Hence, it follows that
\begin{align*}
\Oo_{D^T}\biggl(-\sum_{V \supset T} D^{T} \cap D^{V}\biggr) &= \Oo_{\oM_{0,\vert{T}\vert+1} \times \oM_{0,\vert{T^{c}}\vert+1}}\biggl(\oM_{0,\vert{T}\vert+1} \times \biggl(-\sum_{\substack{\bullet \in W \\ 2,3 \in W^{c}}} D_{\vert{T^{c}}\vert+1}^{W}\biggr)\biggr) \\
&= \text{pr}_{2}^{*} L_{\bullet}^{\vee} = L_{b}^{\vee}.
\end{align*}

\paragraph{Case 2.} Now assume that $\vert{T \cap \{1,2,3\}}\vert = 0$. Under the identifications of $S_{1}^{T}$ and $\Sigma_{1}^{i}$ with $D^{T}$ and $\oM_{0,n}$, respectively, we have that $\Oo_{S_{1}^{T}}(\Sigma_{1}^{i})$ is the restriction to $D^{T}$ of the normal bundle to the graph of 
\begin{equation*}
\pi_{1,2,3,n+1} \circ \sigma^{i} \colon \oM_{0,n} \to \oM_{0,4} = \P^{1},
\end{equation*}
i.e. we have
\begin{equation*}
\Oo_{S_{1}^{T}}(\Sigma_{1}^{i}) = (\pi_{1,2,3,n+1} \circ \sigma^{i} \vert_{D^{T}})^{*}T_{\P^{1}} = (\pi_{1,2,3,n+1} \circ \sigma^{i} \vert_{D^{T}})^{*}\Oo_{\P^{1}}(2).
\end{equation*}
Let $\pi_{1,2,3,i} \colon \oM_{0,n} \to \oM_{0,4}$ forget all but markings $1,2,3,i$. Then, note that
\begin{equation*}
\pi_{1,2,3,n+1} \circ \sigma^{i} \vert_{D^{T}} = \pi_{1,2,3,i} \vert_{D^{T}}
\end{equation*}
if we view elements of $\oM_{0,4}$ as having marked points labeled by the set $\{1,2,3,4\}$, and the two forgetful maps re-label the marked points $n+1$ and $i$ respectively with $4$. Indeed, both these maps are equal to $\pi_{1,2,3,\bullet} \circ \text{pr}_{2} \colon D^{T} \to \oM_{0,4}$, where the map $\pi_{1,2,3,\bullet} \colon \oM_{0,\vert{T^{c}}\vert+1} \to \oM_{0,4}$ is defined analogously (see \cite[Fact 0.2]{Keel}). Hence, we have that
\begin{equation*}
\Oo_{S_{1}^{T}}(\Sigma_{1}^{i}) = (\pi_{1,2,3,i} \vert_{D^T})^{*}\Oo_{\P^1}(2),
\end{equation*}
where for simplicity we now re-label the marked point $4$ with $i$ in $\oM_{0,4} = \P^{1}$. Note that the divisors $D^{\{1,i\}}$ and $D^{\{2,i\}}$ of $\oM_{0,4}$ are points in $\P^{1}$, hence are linearly equivalent, and so we can write
\begin{equation*}
\Oo_{S_{1}^{T}}(\Sigma_{1}^{i}) = (\pi_{1,2,3,i} \vert_{D^T})^{*}\Oo_{\oM_{0,4}}(D^{\{1,i\}}+D^{\{2,i\}}).
\end{equation*}
We have that
\begin{equation*}
\pi_{1,2,3,i}^{*}(D^{\{1,i\}}+D^{\{2,i\}}) = \sum_{\substack{1,i \in V \\ 2,3 \in V^{c}}} D^{V}+\sum_{\substack{2,i \in V \\ 1,3 \in V^{c}}} D^{V}.
\end{equation*}
Intersecting with $D^{T}$, we get that
\begin{equation*}
\Oo_{S_{1}^{T}}(\Sigma_{1}^{i}) = \Oo_{D^T}\biggl(\sum_{\substack{1,i \in V \\ 2,3 \in V^{c}}} D^{T} \cap D^{V}+\sum_{\substack{2,i \in V \\ 1,3 \in V^{c}}} D^{T} \cap D^{V}\biggr),
\end{equation*}
and hence
\begin{equation*}
  \Oo_{S_{1}^{T}}\biggl(\Sigma_{1}^{i}-\sum_{V \supset T} S_{1}^{T} \cap S_{1}^{V}\biggr)
  = \Oo_{D^T} \biggl( \sum_{\substack{1,i \in V \\ 2,3 \in V^{c}}} D^{T} \cap
  D^{V}+\sum_{\substack{2,i \in V \\ 1,3 \in V^{c}}} D^{T} \cap D^{V}-\sum_{V \supset T}
  D^{T} \cap D^{V}\biggr).
\end{equation*}
However, note that if $V \supset T$ is arbitrary, then since $\vert{V \cap \{1,2,3\}}\vert \le 1$, we know that $1 \in V$ implies that $2,3 \in V^{c}$. Similarly, if $2 \in V$, then $1,3 \in V^{c}$. Moreover, since $i \in T$, we know that $i \in V$. Hence, it follows that 
\begin{equation*}
  \sum_{\substack{1,i \in V \\ 2,3 \in V^{c}}} D^{T} \cap D^{V}+\sum_{\substack{2,i
      \in V \\ 1,3 \in V^{c}}} D^{T} \cap D^{V}-\sum_{V \supset T} D^{T} \cap D^{V} =
  -\sum_{\substack{V \supset T \\ 1,2 \in V^{c}}} D^{T} \cap D^{V},
\end{equation*}
and so analogous to the proof of Case 1, we get that
\begin{align*}
  \Oo_{S_{1}^{T}}\biggl(\Sigma_{1}^{i}-\sum_{V \supset T} S_{1}^{T} \cap
  S_{1}^{V}\biggr) &= \Oo_{\oM_{0,\vert{T}\vert+1} \times
                     \oM_{0,\vert{T^{c}}\vert+1}}\biggl(\oM_{0,\vert{T}\vert+1} \times
                     \biggl(-\sum_{\substack{\bullet \in W \\ 1,2 \in W^{c}}}
  D_{\vert{T^{c}}\vert+1}^{W}\biggr)\biggr) \\ 
                   &= L_{b}^{\vee}.
\end{align*}
Hence, we have shown that
\begin{equation*}
(\sigma_{k}^{i} \vert_{D^{T}})^{*}N_{S_{k}^{T}/B_{k}} = (\sigma_{k}^{i} \vert_{D^{T}})^{*}(g_{k} \vert_{S_{k}^{T}})^{*}((L_{a}^{\vee} \otimes L_{b}^{\vee}) \oplus L_{b}^{\vee}) = (L_{a}^{\vee} \otimes L_{b}^{\vee}) \oplus L_{b}^{\vee},
\end{equation*}
with the final equality following from Theorem \ref{Keel}.
\end{proof}

\subsection{The cobordism class of $\oM_{0,n}$}

For each $1 \le i \le n$, let $\psi_{i} = c_{1}(L_{i}) \in \Omega^{1}(\oM_{0,n})$, called the \textit{psi-classes}. On the divisor $D^{T}$ of $\oM_{0,n}$, we let $\psi_{a} = c_{1}(L_{a})$ and $\psi_{b} = c_{1}(L_{b})$, which are elements of $\Omega^{1}(D^{T})$, and let 
\begin{equation*}
i_{T} \colon D^{T} \to \oM_{0,n}
\end{equation*}
be the inclusion. Following the notation in \cite{SchubertCalculus}, write $q(x,y)$ for the unique power series satisfying $x+_{\Omega}y = x+y-xyq(x,y)$, and define the power series $\varphi(x) \in \L^{*}[[x]]$ by $\varphi(x) = q(x,\bar x)$. Note that we have the relation
\begin{equation} \label{VarphiDef}
\varphi(x) = \frac{1}{x}+\frac{1}{\bar x}.
\end{equation}
\begin{prop} \label{CobUniversalCurve}
As elements of $\Omega^{1}(\oM_{0,n})$, we have that
\begin{equation*}
[\oM_{0,n+1}\xrightarrow{\pi} \oM_{0,n}] = [\P^{1}][\oM_{0,n} \xrightarrow{\text{id}} \oM_{0,n}] +\sum_{T} (i_{T})_{*}\biggl(\frac{1}{\psi_{a}\bigl(\bar\psi_{a}+_{\Omega}\bar\psi_{b}\bigr)}
     - \frac{1}{\bigl( \bar\psi_{a}+_{\Omega}\bar\psi_{b} \bigr) \bar\psi_{b}} + \frac{1}{\bar\psi_a\bar\psi_b}\biggr).
\end{equation*}
\end{prop}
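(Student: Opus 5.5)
We factor $\pi = p_{1} \circ g_{n-2} \circ \pi_{n-2}$ using Theorem \ref{Keel}. Since $\pi_{n-2}$ is an isomorphism, $[\oM_{0,n+1} \xrightarrow{\pi} \oM_{0,n}] = (p_{1})_{*}(g_{n-2})_{*}(1)$. We compute $(g_{k+1})_{*}(1)$ inductively in $\Omega^{*}(B_{1})$ by applying the blow-up formula, Lemma \ref{Blowup}, at each stage. For $f_{k+1} \colon B_{k+1} \to B_{k}$ the center is the disjoint union of the $S_{k}^{T}$ with $\vert T^{c}\vert = k+1$. The formula gives
\begin{equation*}
(f_{k+1})_{*}(1) = 1 - \sum_{T} \bigl[\P(N_{T} \oplus \Oo) \to B_{k}\bigr] + \sum_{T} \bigl[\P_{\P(N_{T})}(\Oo(1)\oplus\Oo) \to B_{k}\bigr],
\end{equation*}
where $N_{T} = N_{S_{k}^{T}/B_{k}}$. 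Pushing forward by $g_{k}$ and using induction, each correction term becomes a class supported on $S_{1}^{T} \cong D^{T}$, because $g_{k}$ restricted to $S_{k}^{T}$ is an isomorphism onto $S_{1}^{T}$. The base term is $(p_{1})_{*}(1) = [\P^{1}]\cdot 1$, since $B_{1} = \oM_{0,n}\times\P^{1}$. Composing with $p_{1} \circ \sigma$ identifies each $S_{1}^{T}$ with $D^{T}$ inside $\oM_{0,n}$. So each $T$, with $\vert T^{c}\vert$ ranging over $2,\dots,n-2$, contributes $(i_{T})_{*}$ of the pushforward to $D^{T}$ of the two projective-bundle terms. (Since $D^{T}=D^{T^{c}}$, we must check that each divisor is counted exactly once with the normalization $\vert T\cap\{1,2,3\}\vert \le 1$; this bookkeeping is routine.)

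Next we evaluate the two projective-bundle pushforwards with Theorem \ref{Vishik}, using Lemma \ref{Conormals}: $N_{T} = (L_{a}^{\vee}\otimes L_{b}^{\vee}) \oplus L_{b}^{\vee}$, with Chern roots $\lambda_{1} = \bar\psi_{a}+_{\Omega}\bar\psi_{b}$ and $\lambda_{2} = \bar\psi_{b}$. For $\P(N_{T}\oplus\Oo) \to D^{T}$, take $f=1$ and roots $\lambda_{1},\lambda_{2},0$. This gives
\begin{equation*}
\frac{1}{(\lambda_{2}+_{\Omega}\bar\lambda_{1})\lambda_{1}... }
\end{equation*}
more precisely the symmetric three-term expression
\begin{equation*}
\frac{1}{(\lambda_{2}+_{\Omega}\bar\lambda_{1})\,\bar\lambda_{1}}+\frac{1}{(\lambda_{1}+_{\Omega}\bar\lambda_{2})\,\bar\lambda_{2}}+\frac{1}{\lambda_{1}\lambda_{2}}.
\end{equation*}
For the iterated bundle, we first push forward along $\P_{\P(N_{T})}(\Oo(1)\oplus\Oo) \to \P(N_{T})$. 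With $\xi = c_{1}(\Oo_{\P(N_T)}(1))$, this gives $\frac{1}{\xi+_{\Omega}0}\cdot$ plus the other root term, i.e. $\varphi(\xi) = \frac{1}{\xi}+\frac{1}{\bar\xi}$ up to orientation conventions. We then push forward along $\P(N_{T}) \to D^{T}$ using Theorem \ref{Vishik} with $f(t) = \frac1t + \frac1{\bar t}$ and roots $\lambda_{1},\lambda_{2}$.

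Finally we combine the two expressions and simplify using $\lambda_{2}+_{\Omega}\bar\lambda_{1} = \psi_{a}$ (from $\bar\psi_{b} - _{\Omega}(\bar\psi_{a}+_{\Omega}\bar\psi_{b}) = \psi_{a}$) and $\lambda_{1}+_{\Omega}\bar\lambda_{2} = \bar\psi_{a}$. Many terms should cancel in pairs, such as $\frac{1}{\bar\lambda_{i}\cdot(\cdots)}$ against the matching term from the second bundle. The remainder should be $\frac{1}{\psi_{a}\lambda_{1}} - \frac{1}{\lambda_{1}\lambda_{2}} + \frac{1}{\bar\psi_{a}\lambda_{2}}$ or a variant of it, which should then be massaged into the stated form.

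The main obstacle is this last algebraic simplification: tracking the signs and which of $\lambda$ or $\bar\lambda$ appears, given the convention for $\P(V)$ and $\Oo(1)$ in Theorem \ref{Vishik}. We would first check the identity numerically in the Chow specialization, where $\bar x=-x$; there the total correction must reduce to $\pi_{*}(1)=0$ after also subtracting the $[\P^{1}]$ term. We would also check the degree-$0$ part against $[\oM_{0,5}] = 4u_{1}^{2}-3u_{2}$ to pin down conventions. A second, milder point is justifying that the terms are genuinely power series in $\psi_{a},\psi_{b}$: the apparent poles cancel because Quillen's expression is a polynomial, as in the remark following Theorem \ref{Vishik}.
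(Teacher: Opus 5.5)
Your proposal follows the paper's own proof. The ingredients are Keel's factorization $\pi = p_1\circ g_{n-2}\circ\pi_{n-2}$, the blow-up formula applied at each stage and pushed down to $B_1$ and then to $\oM_{0,n}$, Lemma \ref{Conormals} for the normal bundles, and Theorem \ref{Vishik} applied to the rank-3 bundle and to the iterated bundle (with $f=\varphi$).

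The final simplification you flagged as the main obstacle is immediate. The terms $\frac{1}{(\lambda_2+_\Omega\bar\lambda_1)\bar\lambda_1}$ and $\frac{1}{(\lambda_1+_\Omega\bar\lambda_2)\bar\lambda_2}$ cancel exactly between the two bundle contributions. Your predicted remainder $\frac{1}{\psi_a\lambda_1}-\frac{1}{\lambda_1\lambda_2}+\frac{1}{\bar\psi_a\lambda_2}$ is then literally the stated expression, not a variant, once you substitute $\lambda_1=\bar\psi_a+_\Omega\bar\psi_b$ and $\lambda_2=\bar\psi_b$. Your worry about the $\Oo(1)$ orientation in the inner pushforward is moot, since $\varphi(\bar x)=\varphi(x)$.
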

\begin{proof}
First, since the map $\pi_{n-2} \colon \oM_{0,n+1} \to B_{n-2}$ is an isomorphism over $B_{1}$ by Theorem \ref{Keel}, we have that
\begin{equation*}
[\oM_{0,n+1} \xrightarrow{\pi_{1}} B_{1}] = [B_{n-2} \xrightarrow{g_{n-2}} B_{1}].
\end{equation*}
For $\vert{T^{c}}\vert = k$, let $i_{k,T} \colon D^{T} \to B_{k}$ be the inclusion obtained by identifying $S_{k}^{T}$ with $D^{T}$ as in the statement of Lemma \ref{Conormals}. Since the $D^{T}$ for $\vert{T^{c}}\vert = k$ are disjoint, we have by Lemma \ref{Blowup} that 
\begin{align*}
[B_{k+1} \xrightarrow{f_{k+1}} B_{k}] &= [B_{k} \xrightarrow{\text{id}} B_{k}]-\sum_{\vert{T^{c}}\vert = k} (i_{k,T})_{*}[\P(N_{D^{T}/B_{k}} \oplus \Oo_{D^T}) \to D^{T}] \\
&\quad+ \sum_{\vert{T^{c}}\vert = k} (i_{k,T})_{*}p_{*} [\P_{\P(N_{D^{T}/B_{k}})}(\Oo(1) \oplus \Oo) \to \P(N_{D^{T}/B_{k}})].
\end{align*}
Pushing forward to $B_{1}$ via $g_{k}$, we get that
\begin{align*}
[B_{k+1} \xrightarrow{g_{k+1}} B_{1}] &= [B_{k} \xrightarrow{g_{k}} B_{1}]-\sum_{\vert{T^{c}}\vert = k} (g_{k})_{*}(i_{k,T})_{*}[\P(N_{D^{T}/B_{k}} \oplus \Oo_{D^T}) \to D^{T}] \\
&\quad+ \sum_{\vert{T^{c}}\vert = k} (g_{k})_{*}(i_{k,T})_{*}p_{*} [\P_{\P(N_{D^{T}/B_{k}})}(\Oo(1) \oplus \Oo) \to \P(N_{D^{T}/B_{k}})].
\end{align*}
By induction, and Lemma \ref{Conormals}, 
\begin{align*}
[B_{n-2} \xrightarrow{g_{n-2}} B_{1}] &= [B_{1} \xrightarrow{\text{id}} B_{1}]-\sum_{T} (g_{k})_{*}(i_{k,T})_{*}[\P((L_{a}^{\vee} \otimes L_{b}^{\vee}) \oplus L_{b}^{\vee} \oplus \Oo_{D^T}) \to D^{T}] \\
&\quad+ \sum_{T} (g_{k})_{*}(i_{k,T})_{*}p_{*} [\P_{\P((L_{a}^{\vee} \otimes L_{b}^{\vee}) \oplus L_{b}^{\vee})}(\Oo(1) \oplus \Oo) \to \P((L_{a}^{\vee} \otimes L_{b}^{\vee}) \oplus L_{b}^{\vee})].
\end{align*}
We then push forward to $\oM_{0,n}$ via the projection $p_{1} \colon B_{1} = \oM_{0,n} \times \P^{1} \to \oM_{0,n}$. Since $g_{k} \circ i_{k,T} = i_{T}$, it follows from the definition of the product in $\Omega^{*}$ that 
\begin{align}
\notag
\lbrack\oM_{0,n+1} \xrightarrow{\pi} \oM_{0,n}\rbrack &= [\P^{1}][\oM_{0,n} \xrightarrow{\text{id}} \oM_{0,n}]-\sum_{T} (i_{T})_{*}[\P((L_{a}^{\vee} \otimes L_{b}^{\vee}) \oplus L_{b}^{\vee} \oplus \Oo_{D^T}) \to D^{T}] \\
\label{equation0}
&\quad+ \sum_{T} (i_{T})_{*}p_{*} [\P_{\P((L_{a}^{\vee} \otimes L_{b}^{\vee}) \oplus L_{b}^{\vee})}(\Oo(1) \oplus \Oo) \to \P((L_{a}^{\vee} \otimes L_{b}^{\vee}) \oplus L_{b}^{\vee})].
\end{align}
By Theorem \ref{Vishik} applied to the rank-3 bundle $V = (L_{a}^{\vee} \otimes L_{b}^{\vee}) \oplus L_{b}^{\vee} \oplus \Oo_{D^T}$ on $X = D^T$, so in this case $\lambda_{1} = \bar\psi_{a}+_{\Omega}\bar\psi_{b}$, $\lambda_{2} = \bar\psi_{b}$ and $\lambda_{3} = 0$, we get that
\begin{equation} \label{equation1}
[\P((L_{a}^{\vee} \otimes L_{b}^{\vee}) \oplus L_{b}^{\vee} \oplus \Oo_{D^T}) \to D^{T}] = \frac{1}{\psi_{a}(\psi_{a}+_{\Omega}\psi_{b})}+\frac{1}{\bar\psi_{a}\psi_{b}}+\frac{1}{(\bar\psi_{a}+_{\Omega}\bar\psi_{b})\bar\psi_{b}}.
\end{equation}
Again by Theorem \ref{Vishik}, we have that
\begin{equation*}
[\P_{\P((L_{a}^{\vee} \otimes L_{b}^{\vee}) \oplus L_{b}^{\vee})}(\Oo(1) \oplus \Oo) \to \P((L_{a}^{\vee} \otimes L_{b}^{\vee}) \oplus L_{b}^{\vee})] = \frac{1}{\xi}+\frac{1}{\bar\xi},
\end{equation*}
where $\xi = c_{1}(\Oo(1))$, which is equal to $\varphi(\xi)$ by Equation \ref{VarphiDef}. Hence, by applying Theorem \ref{Vishik} to the rank-2 bundle $V = (L_{a}^{\vee} \otimes L_{b}^{\vee}) \oplus L_{b}^{\vee}$ on $X = D^{T}$, it follows that
\begin{multline} \label{equation2}
p_{*} [\P_{\P((L_{a}^{\vee} \otimes L_{b}^{\vee}) \oplus L_{b}^{\vee})}(\Oo(1) \oplus \Oo) \to
\P((L_{a}^{\vee} \otimes L_{b}^{\vee}) \oplus L_{b}^{\vee})] \\
= \frac{1}{\psi_{a}}\left(\frac{1}{\psi_{a}+_{\Omega}\psi_{b}}+\frac{1}{\bar\psi_{a}+_{\Omega}\bar\psi_{b}}\right)+\frac{1}{\bar\psi_{a}}\left(\frac{1}{\psi_{b}}+\frac{1}{\bar\psi_{b}}\right).
\end{multline}
Substituting Equations \ref{equation1} and \ref{equation2} into Equation \ref{equation0} yields our desired formula.
\end{proof}
As a power series in $\L^{*}[[x,y]]$, we have  
\begin{align}
\notag
\frac{1}{x(\bar x+_{\Omega}\bar y)}
&-\frac{1}{(\bar x +_{\Omega} \bar y)\bar y}+\frac{1}{\bar x \bar y} \\
\notag
&\quad= \frac{1}{x}(\varphi(x+_{\Omega}y)-\varphi(y))+\biggl(\frac{1}{y}-\varphi(y)\biggr)(\varphi(x+_{\Omega}y)-\varphi(x))+\frac{1}{x+_{\Omega}y}(\varphi(y)-q(x,y)) \\
\label{b's}
&\quad= \sum_{i,j \ge 0} b_{ij}x^{i}y^{j},
\end{align}
where $b_{ij}$ is in degree $2+i+j$. We prove the first equality, and
explain why it is a power series, in Proposition~\ref{Expression},
after which we give, in Equation~\ref{R(x,y)}, a computation of the
coefficients up to degree 3. In particular, using
Example~\ref{LowDegGen}, we know that
\begin{equation*}
  b_{00} = [\P^{1}]^{2}-[\P^{2}], \hspace{0.5cm} b_{10} =
  [\Bl_{\text{pt}}\P^{3}]-[\P^{1}][\P^{2}], \hspace{0.5cm}
  b_{01} = 2([\Bl_{\text{pt}}\P^{3}]-[\P^{1}][\P^{2}]).
\end{equation*}

If $X \in \Sm_{k}$ and $p \colon X \to \Spec k$ is the projection, then for $\alpha \in \Omega^{*}(X)$ we let 
\begin{equation*}
\int_{X}\alpha := p_{*}(\alpha).
\end{equation*}
\begin{cor} \label{M0n}
We have that $[\oM_{0,3}] = 1$, and for each $n \ge 3$, 
\begin{equation*}
[\oM_{0,n+1}] = [\P^{1}][\oM_{0,n}]+\sum_{i,j \ge 0}\sum_{T}b_{ij}\int_{\oM_{0,\vert{T}\vert+1}} \psi_{\bullet}^{i}\int_{\oM_{0,\vert{T^c}\vert+1}} \psi_{\bullet}^{j}.
\end{equation*}
\end{cor}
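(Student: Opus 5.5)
The plan is to push the identity of Proposition \ref{CobUniversalCurve} forward to $\Spec k$ along the structure map $p \colon \oM_{0,n} \to \Spec k$. The base case $[\oM_{0,3}] = 1$ holds because $\oM_{0,3}$ is a point. For $n \ge 3$, the composite $p \circ \pi$ is the structure map of $\oM_{0,n+1}$, so by functoriality of pushforward
\begin{equation*}
p_{*}[\oM_{0,n+1} \xrightarrow{\pi} \oM_{0,n}] = [\oM_{0,n+1}].
\end{equation*}
On the right-hand side, the class $[\P^{1}][\oM_{0,n} \xrightarrow{\text{id}} \oM_{0,n}]$ is the pullback of $[\P^{1}] \in \L^{*}$ times the unit. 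By the projection formula it therefore pushes forward to $[\P^{1}][\oM_{0,n}]$.

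The remaining terms are $p_{*}(i_{T})_{*}(R(\psi_{a},\psi_{b}))$, where
\begin{equation*}
R(x,y) = \frac{1}{x(\bar x+_{\Omega}\bar y)}-\frac{1}{(\bar x+_{\Omega}\bar y)\bar y}+\frac{1}{\bar x\bar y}.
\end{equation*}
Since $p \circ i_{T}$ is the structure map of $D^{T}$, each such term equals $\int_{D^{T}} R(\psi_{a},\psi_{b})$. By Equation \ref{b's} we have $R(x,y) = \sum_{i,j} b_{ij}x^{i}y^{j}$. The series terminates on $D^{T}$ because $\Omega^{m}(D^{T}) = 0$ for $m > \dim D^{T}$, so the term is $\sum_{i,j} b_{ij}\int_{D^{T}}\psi_{a}^{i}\psi_{b}^{j}$. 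Here I use Proposition \ref{Expression} to know that $R$ is an honest power series, with no negative powers.

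Next, write $D^{T} = \oM_{0,|T|+1} \times \oM_{0,|T^c|+1}$, with $\psi_{a} = \text{pr}_{1}^{*}\psi_{\bullet}$ and $\psi_{b} = \text{pr}_{2}^{*}\psi_{\bullet}$. Then $\psi_{a}^{i}\psi_{b}^{j}$ is the external product $\psi_{\bullet}^{i} \times \psi_{\bullet}^{j}$. The pushforward of an external product to a point is the product of the pushforwards, since $\text{pt} \times \text{pt} = \text{pt}$ and pushforward is compatible with external products (clear from the geometric description of cobordism). This yields $\int_{\oM_{0,|T|+1}}\psi_{\bullet}^{i}\int_{\oM_{0,|T^c|+1}}\psi_{\bullet}^{j}$, which is the claimed formula.

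The main obstacle is only bookkeeping around the convergence of $R$. The proposition is stated with the rational expression $R(\psi_{a},\psi_{b})$, so I must make sure it is read as the evaluation of the power series $\sum b_{ij}x^{i}y^{j}$ at nilpotent classes. I would justify this from Proposition \ref{Expression}, or directly from the Quillen formula (Theorem \ref{Vishik}), whose output is a symmetric expression that is a genuine power series in the Chern classes. I also need the ordering convention, that $a$ corresponds to the $T$ side, to match Lemma \ref{Conormals}; this fixes which factor carries $\psi^{i}$ versus $\psi^{j}$, and it matters because $b_{ij} \ne b_{ji}$.
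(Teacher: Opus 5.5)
Your proposal is correct and follows the paper's proof. Push the identity of Proposition \ref{CobUniversalCurve} forward to a point, expand the series as $\sum b_{ij}\psi_a^i\psi_b^j$, and split $\int_{D^T}\psi_a^i\psi_b^j$ into a product of integrals over the two factors of $D^T$. The only cosmetic difference is that you justify the splitting by compatibility of external products with pushforward, whereas the paper uses the projection formula with the base-change axiom $(\text{pr}_1)_*\text{pr}_2^* = q_1^*(q_2)_*$; these amount to the same thing.
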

\begin{proof}
By pushing forward the relation of Proposition \ref{CobUniversalCurve} to a point, we have that
\begin{equation*}
[\oM_{0,n+1}] = [\P^{1}][\oM_{0,n}]+\sum_{i,j \ge 0}\sum_{T}b_{ij}\int_{D^T}\psi_{a}^{i}\psi_{b}^{j}.
\end{equation*}
Consider the diagram
\begin{equation*}
  \begin{tikzcd}[column sep=5em]
    D^{T} \ar[r,"\text{pr}_{2}"] \ar[d,"\text{pr}_{1}"'] \ar[dr,"p"]
    & \oM_{0,\vert{T^c}\vert+1} \ar[d,"q_{2}"] \\
    \oM_{0,\vert{T}\vert+1} \ar[r,"q_{1}"] & \Spec k.
  \end{tikzcd}
\end{equation*}
By the projection formula, we have that
\begin{equation*}
\int_{D^T}\psi_{a}^{i}\psi_{b}^{j} = p_{*}((\text{pr}_{1}^{*}\psi_{\bullet}^{i})(\text{pr}_{2}^{*}\psi_{\bullet}^{j})) = (q_{1})_{*}(\text{pr}_{1})_{*}((\text{pr}_{1}^{*}\psi_{\bullet}^{i})(\text{pr}_{2}^{*}\psi_{\bullet}^{j})) = (q_{1})_{*}(\psi_{\bullet}^{i}(\text{pr}_{1})_{*}\text{pr}_{2}^{*}\psi_{\bullet}^{j}).
\end{equation*}
Following from the axioms of algebraic cobordism \cite[Definition 1.1.2]{AlgCob}, we know that $(\text{pr}_{1})_{*}\text{pr}_{2}^{*} = q_{1}^{*}(q_{2})_{*}$. Hence, it follows that
\begin{equation*}
(q_{1})_{*}(\psi_{\bullet}^{i}(\text{pr}_{1})_{*}\text{pr}_{2}^{*}\psi_{\bullet}^{j}) = (q_{1})_{*}(\psi_{\bullet}^{i}q_{1}^{*}(q_{2})_{*}\psi_{\bullet}^{j}) = (q_{1})_{*}(\psi_{\bullet}^{i})(q_{2})_{*}(\psi_{\bullet}^{j}),
\end{equation*}
which is equal to $\left(\int_{\oM_{0,\vert{T}\vert+1}} \psi_{\bullet}^{i}\right)\left(\int_{\oM_{0,\vert{T^c}\vert+1}} \psi_{\bullet}^{j}\right)$.
\end{proof}

\section{The String equation in cobordism} \label{StringSection}

In this section, we give an inductive formula for all cobordism-valued psi-class intersections on $\oM_{0,n}$, which in particular includes the cobordism classes $[\oM_{0,n}]$. For integers $d_{1},\dots,d_{n} \ge 0$, define
\begin{equation*}
  \psi^d = \psi_1^{d_1} \dotsm \psi_n^{d_n} ,
\end{equation*}
and for $1\le i\le n$, define
\begin{equation*}
  \psi^d/\psi_i =
  \begin{cases}
    \psi_1^{d_1} \dotsm \psi_i^{d_i-1} \dotsm \psi_n^{d_n} , & d_i>0 , \\
    0 , & d_i=0 .
  \end{cases}
\end{equation*}
We let $|d|=d_1+\cdots+d_n$. Write
\begin{equation} \label{c's}
\frac{x}{\bar x} = x\varphi(x)-1 = \sum_{j \ge 0} c_{j}x^{j},
\end{equation}
where $c_{j}$ is in degree $j$. In low degrees, we have that $c_{0} = -1$, $c_{1} = [\P^{1}]$, and $c_{2} = 0$, which follows from Equation \ref{x/chi(x)} and Example \ref{LowDegGen}. 
\begin{thm}[String equation] \label{StringEquation}
For $n \ge 3$, we have  
  \begin{multline*}
    \int_{\oM_{0,n+1}} 1\cdot\psi^d \\
    \begin{aligned}
    &= [\P^{1}] \int_{\oM_{0,n}} \psi^d
      - \sum_{i=1}^{n}\int_{\oM_{0,n}} \frac{\psi_{i}}{\bar\psi_{i}} \psi^d/\psi_i
      + \sum_{T}\int_{D^{T}} \biggl(
      \frac{1}{\psi_{a}\bigl(\bar\psi_{a}+_{\Omega}\bar\psi_{b}\bigr)}
     - \frac{1}{\bigl( \bar\psi_{a}+_{\Omega}\bar\psi_{b} \bigr) \bar\psi_{b}} + \frac{1}{\bar\psi_a\bar\psi_b}
      \biggr) i_{T}^{*} \psi^d \\
    &= [\P^{1}] \int_{\oM_{0,n}}  \psi^d -\sum_{i=1}^{n} \sum_{j \ge 0} \int_{\oM_{0,n}}
       c_{j}\psi_{i}^{j} \psi^d/\psi_i + \sum_{i,j \ge 0} \sum_{T}
       b_{ij} \int_{\oM_{0,\vert{T}\vert+1}} \psi_{\bullet}^{i} \prod_{\ell \in T}\psi_{\ell}^{d_\ell}
      \int_{\oM_{0,\vert{T^c}\vert+1}} \psi_{\bullet}^{j} \prod_{\ell \in T^{c}} \psi_{\ell}^{d_\ell}.
    \end{aligned}
  \end{multline*}
\end{thm}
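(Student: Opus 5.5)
We follow the classical proof of the string equation, now in $\Omega^*$. The starting point is the comparison of $\psi$-classes under the forgetful map $\pi\colon \oM_{0,n+1}\to\oM_{0,n}$. By Equation \ref{ForgetfulRelation}, $L_i=\pi^*L_i\otimes\Oo(D_i)$ on $\oM_{0,n+1}$, where $D_i := D^{\{i,n+1\}}$. Hence
\begin{equation*}
\psi_i = \pi^*\psi_i +_{\Omega} [D_i],
\end{equation*}
with $[D_i]=c_1(\Oo(D_i))$.

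We will use three standard facts about the divisors $D_i$. First, $D_i=\sigma^i(\oM_{0,n})$. Second, the $D_i$ are pairwise disjoint. Third, $\psi_i$ restricts to zero on $D_i$, since $L_i|_{D_i}$ is the cotangent line at a marked point on a $3$-pointed $\P^1$, which is trivial. The normal bundle of $D_i=\sigma^i(\oM_{0,n})$ is $(\sigma^i)^*T_\pi=L_i^\vee$, so $[D_i]|_{D_i}=\bar\psi_i$, pulled back from $\oM_{0,n}$.

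We write $\psi_i=\pi^*\psi_i+E_i$, where $E_i:=\psi_i-\pi^*\psi_i$. We claim $E_i$ is supported on $D_i$: it is a power series in $[D_i]$ divisible by $[D_i]$, and $[D_i]=(\sigma^i)_*1$. Since $D_i\cap D_j=\emptyset$, we get $E_iE_j=0$ for $i\ne j$. Moreover $\psi_i E_i=0$, because $\psi_i|_{D_i}=0$. From $\psi_i=\pi^*\psi_i+E_i$ we then get $\pi^*\psi_i\cdot E_i=-E_i^2$.

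Now expand $\psi^d=\prod_i(\pi^*\psi_i+E_i)^{d_i}$. The terms containing $E_iE_j$ with $i\neq j$ vanish. Using $\psi_i^{d_i}=\psi_i\cdot\psi_i^{d_i-1}$ and $\psi_iE_i=0$, we find, for $d_i>0$,
\begin{equation*}
\psi_i^{d_i}=\pi^*\psi_i^{d_i}+\bigl(\psi_i^{d_i}-\pi^*\psi_i^{d_i}\bigr),
\end{equation*}
where the correction term is supported on $D_i$. Restricting this correction to $D_i$ and identifying $D_i$ with $\oM_{0,n}$ via $\sigma^i$, it equals $0-\psi_i^{d_i}$. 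The cleanest way to organise this is the projection formula for $\sigma^i$:
\begin{equation*}
\psi^d=\pi^*\psi^d+\sum_i(\sigma^i)_*\bigl(\alpha_i\bigr),
\end{equation*}
where $\alpha_i$ is determined by the self-intersection formula $(\sigma^i)^*(\sigma^i)_*\beta=\bar\psi_i\,\beta$. Solving $(\sigma^i)^*(\text{correction})=-\psi^d$ (the other factors restrict to $\psi^d/\psi_i^{d_i}$ on $\oM_{0,n}$) gives
\begin{equation*}
\alpha_i=-\frac{\psi_i}{\bar\psi_i}\,\psi^d/\psi_i .
\end{equation*}
Strictly, the correction term is a power series in $[D_i]$ with coefficients pulled back by $\pi$. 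Writing it as $(\sigma^i)_*$ of something and using injectivity-free manipulation is the main obstacle. I would handle it by computing directly: the correction equals $(\sigma^i)_*\bigl(g(\psi_i,\bar\psi_i)\bigr)$ for an explicit power series $g$. From $[D_i]^m=(\sigma^i)_*(\bar\psi_i^{\,m-1})$ and $(\pi^*\psi_i)^a[D_i]=(\sigma^i)_*(\psi_i^a)$, expanding $(\pi^*\psi_i+_\Omega[D_i])^{d_i}-\pi^*\psi_i^{d_i}$ gives $g$. Evaluating with $\bar\psi_i$ plugged in for $[D_i]$, and using $\psi_i+_\Omega\bar\psi_i=0$, gives
\begin{equation*}
g=\frac{0-\psi_i^{d_i}}{\bar\psi_i}=-\frac{\psi_i}{\bar\psi_i}\,\psi_i^{d_i-1}.
\end{equation*}
The division makes sense because the difference vanishes at $[D_i]=0$.

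Next we push forward to a point. Since $\pi\circ\sigma^i=\mathrm{id}$, the correction terms contribute $-\sum_i\int_{\oM_{0,n}}\frac{\psi_i}{\bar\psi_i}\psi^d/\psi_i$. By the projection formula the main term gives $\int_{\oM_{0,n}}\psi^d\cdot\pi_*(1)$. Substituting Proposition \ref{CobUniversalCurve} for $\pi_*(1)=[\oM_{0,n+1}\to\oM_{0,n}]$ and applying the projection formula for $i_T$ yields the first displayed equality.

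For the second equality, expand $x/\bar x=\sum c_jx^j$ using Equation \ref{c's}, and expand the bracket as $\sum b_{ij}\psi_a^i\psi_b^j$ using Equation \ref{b's}. On $D^T\cong\oM_{0,|T|+1}\times\oM_{0,|T^c|+1}$ we have $i_T^*\psi_\ell=\mathrm{pr}_1^*\psi_\ell$ for $\ell\in T$, and similarly via $\mathrm{pr}_2$ for $\ell\in T^c$. The integral then splits as in the proof of Corollary \ref{M0n}.

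Finally, uniqueness with the initial condition follows by induction on $n$. All terms on the right involve only spaces $\oM_{0,m}$ with $m\le n$, and every $\psi^d$ on $\oM_{0,n+1}$ has this form with $d_{n+1}=0$ after relabeling by symmetry.
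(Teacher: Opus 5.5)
Your proposal is correct and follows essentially the same route as the paper. Both arguments use the comparison $\psi_i=\pi^*\psi_i+_\Omega c_1(\Oo(D^{\{i,n+1\}}))$, the vanishing of cross terms by disjointness, the identities $c_1(\Oo(D^{\{i,n+1\}}))^m=(\sigma^i)_*(\bar\psi_i^{\,m-1})$ and $\psi_i+_\Omega\bar\psi_i=0$ to evaluate $\pi_*$ of the correction, and then Proposition \ref{CobUniversalCurve} with the projection formula; the only difference is cosmetic. The paper first uses $\psi_iD_i=0$ to reduce $\psi_i^{d_i}$ to $\psi_i(\pi^*\psi_i)^{d_i-1}$ and expands one factor via the coefficients $a_{j\ell}$, whereas your substitution $y\mapsto\bar\psi_i$ in $\bigl((x+_\Omega y)^{d_i}-x^{d_i}\bigr)/y$ handles the whole power at once, and it rightly replaces your non-rigorous ``divide by $\bar\psi_i$'' heuristic.
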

\begin{proof}
Let $I$ denote the set of all indices $1 \le i \le n$ such that $d_{i} > 0$. For each $i \in I$, we let $D_{i} = c_{1}(\Oo_{\oM_{0,n+1}}(D^{\{i,n+1\}}))$. By Equation \ref{ForgetfulRelation}, it follows that
\begin{equation*}
\psi_{i} = \pi^{*}\psi_{i}+_{\Omega}D_{i}.
\end{equation*}
Note that if $j \in I$ with $i \neq j$, then $D^{\{i,n+1\}} \cap D^{\{j,n+1\}} = \emptyset$, which implies that $D_{i}D_{j} = 0$. Indeed, let $f \colon D^{\{i,n+1\}} \to \oM_{0,n+1}$ denote the inclusion, so that $D_{i} = f_{*}(1)$. Since the canonical section $s$ of $\Oo(D^{\{j,n+1\}})$ has zero locus $D^{\{j,n+1\}}$, the pull-back section $f^{*}s$ has zero locus $D^{\{i,n+1\}} \cap D^{\{j,n+1\}} = \emptyset$. That is, the line bundle $f^{*}\Oo(D^{\{j,n+1\}})$ on $D^{\{i,n+1\}}$ is trivial as it has a nowhere vanishing section, and so $c_{1}(f^{*}\Oo(D^{\{j,n+1\}})) = 0$. Hence, by the projection formula,
\begin{equation*}
D_{i}D_{j} = f_{*}(1)c_{1}(\Oo(D^{\{j,n+1\}})) = f_{*}(f^{*}c_{1}(\Oo(D^{\{j,n+1\}}))) = 0.
\end{equation*}
Similarly, note that $f^{*}L_{i}$ is a trivial line bundle on $D^{\{i,n+1\}}$, so
\begin{equation*}
\psi_{i}D_{i} = c_{1}(L_{i})f_{*}(1) = f_{*}(c_{1}(f^{*}L_{i})) = 0.
\end{equation*}
Hence, it follows that
\begin{align*}
\int_{\oM_{0,n+1}} 1\cdot\psi^d &= \int_{\oM_{0,n+1}} \prod_{i \in I} \psi_{i}(\pi^{*}\psi_{i}+_{\Omega}D_{i})^{d_{i}-1} \\
&= \int_{\oM_{0,n+1}} \prod_{i \in I} \psi_{i}(\pi^{*}\psi_{i})^{d_{i}-1}.
\end{align*}
Writing $\psi_{i} = \pi^{*}\psi_{i}+_{\Omega}D_{i}$ and $x+_{\Omega}y = x+y+\sum_{j,\ell \ge 1} a_{j\ell}x^{j}y^{\ell}$, we again have by the projection formula that
\begin{align*}
\int_{\oM_{0,n+1}} \prod_{i \in I} \psi_{i}(\pi^{*}\psi_{i})^{d_{i}-1} &= \int_{\oM_{0,n+1}} \prod_{i \in I} \biggl(\pi^{*}\psi_{i}+D_{i}+\sum_{j,\ell \ge 1} a_{j\ell}(\pi^{*}\psi_{i})^{j}D_{i}^{\ell}\biggr)(\pi^{*}\psi_{i})^{d_{i}-1} \\
&= \int_{\oM_{0,n}} \pi_{*}(1)\psi^d+\sum_{i=1}^{n} \int_{\oM_{0,n}}
  \biggl(\pi_{*}(D_{i})+\sum_{j,\ell \ge 1}
  a_{j\ell}\psi_{i}^{j}\pi_{*}(D_{i}^{\ell})\biggr)\psi^d/\psi_i.
\end{align*}
For $\ell \ge 2$, we have that
\begin{equation*}
D_{i}^{\ell} = f_{*}(1)f_{*}(1)^{\ell-1} = f_{*}(f^{*}(f_{*}(1)^{\ell-1})) = f_{*}(f^{*}(c_{1}(\Oo(D^{\{i,n+1\}}))^{\ell-1})) = f_{*}(f^{*}c_{1}(\Oo(D^{\{i,n+1\}})))^{\ell-1}.
\end{equation*}
Note that we can identify $f$ with the section $\sigma^{i}$ of $\pi \colon \oM_{0,n+1} \to \oM_{0,n}$ corresponding to the $i^{\text{th}}$ marked point, and so $f^{*}c_{1}(\Oo(D^{\{i,n+1\}})) = L_{i}^{\vee}$. Hence, it follows that $D_{i}^{\ell} = f_{*}(\bar\psi_{i}^{\ell-1})$, and 
\begin{equation*}
\pi_{*}(D_{i}^{\ell}) = \pi_{*}f_{*}(\bar\psi_{i}^{\ell-1}) = \bar\psi_{i}^{\ell-1}.
\end{equation*}
Observe that this equation also holds for $k = 1$. Therefore, we have
\begin{align*}
  \int_{\oM_{0,n+1}} 1 \cdot \psi^d &= \int_{\oM_{0,n}} \pi_{*}(1)\psi^d+\sum_{i=1}^{n}
                          \int_{\oM_{0,n}} \biggl(1+\sum_{j,\ell \ge 1}
                          a_{j\ell}\psi_{i}^{j}\bar\psi_{i}^{\ell-1}\biggr)\psi^d/\psi_i \\
                          &= \int_{\oM_{0,n}} \pi_{*}(1)\psi^d-\sum_{i=1}^{n}
                            \int_{\oM_{0,n}} \frac{\psi_{i}}{\bar\psi_{i}}\psi^d/\psi_i,
\end{align*}
with the last equality following from the equation $\psi_{i}+_{\Omega}\bar\psi_{i} = 0$. We then replace $\pi_{*}(1)$ using Proposition \ref{CobUniversalCurve} and use the projection formula. The second equality in the statement of Theorem \ref{StringEquation} follows from the fact that
\begin{equation*}
i_{T}^{*}\psi_{j} = \begin{cases}
\text{pr}_{1}^{*}\psi_{j} &\text{if $j \in T$,}\\
\text{pr}_{2}^{*}\psi_{j} &\text{if $j \in T^{c}$,} \\
\end{cases} 
\end{equation*} 
and the same argument as in Corollary \ref{M0n}.
\end{proof}
\begin{cor}
Together with the initial condition $\int_{\oM_{0,3}} 1 = 1$, Theorem \ref{StringEquation} determines the value of all psi-class intersections $\int_{\oM_{0,n}} \psi^d$. 
\end{cor}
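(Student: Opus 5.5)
The plan is a strong induction on $n$, ordering the psi-class intersections first by $n$ and then by $|d|$. The base case is the initial condition $\int_{\oM_{0,3}}1 = 1$; higher powers of psi-classes on $\oM_{0,3}$ vanish because $\Omega^{i}(\Spec k)=0$ for $i>0$. For the inductive step, fix $m \ge 4$ and exponents $(d_1,\dots,d_m)$. We want to express $\int_{\oM_{0,m}}\psi^d$ in terms of intersections on smaller moduli spaces.

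The first point is that the left-hand side of Theorem \ref{StringEquation} only treats monomials in which the last marking carries exponent zero. We need a marking with $d_\ell=0$, and then we relabel. By the $S_m$-symmetry of $\oM_{0,m}$, which permutes the $L_i$ and hence the $\psi_i$, and with pushforward to a point invariant under the automorphism, we may move such a marking to position $m$. Such a marking exists whenever $|d| < m$. If $|d| > m-3 = \dim \oM_{0,m}$, then $\psi^d \in \Omega^{|d|}(\oM_{0,m}) = 0$, so the integral is already determined. So in every nonzero case some $d_\ell = 0$, since $m-3 < m$. After relabeling, the integral becomes the left-hand side of Theorem \ref{StringEquation} with $n = m-1 \ge 3$.

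Next, check that every term on the right-hand side is already determined by induction:
\begin{itemize}
\item The term $[\P^1]\int_{\oM_{0,n}}\psi^d$ lives on $\oM_{0,n}$.
\item The terms $c_j\int_{\oM_{0,n}}\psi_i^j\,\psi^d/\psi_i$ also live on $\oM_{0,n}$. They are finite sums, since only $j \le n-3$ contribute for degree reasons.
\item The boundary terms are products $\int_{\oM_{0,|T|+1}}\psi_\bullet^i\prod_{\ell \in T}\psi_\ell^{d_\ell}$ times the analogous factor on $\oM_{0,|T^c|+1}$. Since $|T|,|T^c| \ge 2$ and $|T|+|T^c| = n$, both $|T|+1$ and $|T^c|+1$ are at most $n < m$.
\end{itemize}
The double sum over $i,j$ is finite for the same vanishing reason, so these are legitimate polynomial expressions in previously determined quantities with coefficients $b_{ij}, c_j \in \L^*$. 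The recursion therefore determines every $\int_{\oM_{0,m}}\psi^d$, and uniqueness follows because any two families satisfying the recursion and the initial condition agree by this induction.

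The only mildly delicate step is the relabeling: we need the psi-class intersections to be symmetric under permuting markings, and we need to confirm that the marking carrying exponent $0$ can play the role of the forgotten point $n+1$. The formula is stated for an arbitrary labelling $1,\dots,n$ of the remaining points, because Keel's choice of $1,2,3$ only affects the indexing set of divisors $T$ and not the resulting sum over all boundary divisors. So the plan is to apply the theorem after the permutation that sends the zero-exponent marking to the last position.
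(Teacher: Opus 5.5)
Your proposal is correct and follows the paper's argument: dimension vanishing kills $\psi^d$ when $|d|>n-3$, otherwise some $d_i=0$, and the string equation then expresses the integral through intersections on strictly smaller $\oM_{0,k}$. You make explicit what the paper leaves implicit, namely the induction, the $S_n$-relabelling, and the check that every right-hand term lives on a smaller space. One caveat: your aside that Keel's choice of $1,2,3$ ``only affects the indexing set'' is not accurate. That choice also decides which branch of each $D^T$ plays the role of $a$ and which plays $b$, and the summand is not symmetric in $a,b$ (since $b_{10}\neq b_{01}$). You never need this aside, though, because permuting the zero-exponent marking to the last position and applying the theorem in its fixed labelling already suffices.
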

\begin{proof}
Since $\text{dim}\,\oM_{0,n} = n-3$, we have by the dimension-vanishing property of $\Omega^{*}$ that $\psi^d = 0$ if $|d| > n-3$. If $0 \le |d| \le n-3$, then $d_{i} = 0$ for some $1 \le i \le n$, and hence Theorem \ref{StringEquation} can be applied. 
\end{proof}
\begin{exmp} \label{ExmpCalcs}
Using Theorem \ref{StringEquation}, and in particular Corollary \ref{M0n}, together with the presentation of the universal formal group law given by Equation \ref{UnivFGL} in Appendix \ref{Appendix}, we obtain the following expressions for the classes $[\oM_{0,n}]$ for $n \le 8$:
\begin{itemize}
\item[] $[\oM_{0,3}] = 1$
\item[] $[\oM_{0,4}] = u_{1}$
\item[] $[\oM_{0,5}] = 4u_{1}^{2}-3u_{2}$
\item[] $[\oM_{0,6}] = 31u_{1}^{3}-30u_{1}u_{2}+17u_{3}$
\item[] $[\oM_{0,7}] = 273u_{1}^{4}-317u_{1}^{2}u_{2}+70u_{2}^{2}+214u_{1}u_{3}-25u_{4}$
\item[] $[\oM_{0,8}] = 2898u_{1}^{5}-4063u_{1}^{3}u_{2}+2012u_{1}u_{2}^{2}+2765u_{1}^{2}u_{3}-1204u_{2}u_{3}-385u_{1}u_{4}+461u_{5}$.
\end{itemize}
By calculating Chern numbers using the expressions for the generators $u_i$ given in Example \ref{LowDegGen}, we can alternatively express these cobordism classes in the rational basis $p_{m} := [\P^{m}]$ of the cobordism ring given by projective spaces. We have, which can also be derived from the formalism of Coates-Givental \cite{CoatesGivental}:
\begin{itemize}
\item[] $[\oM_{0,3}] = 1$
\item[] $[\oM_{0,4}] = p_{1}$
\item[] $[\oM_{0,5}] = 4p_{1}^{2}-3p_{2}$
\item[] $[\oM_{0,6}] = \frac{45}{2}p_{1}^{3}-30p_{1}p_{2}+\frac{17}{2}p_{3}$
\item[] $[\oM_{0,7}] = 166p_{1}^{4}-317p_{1}^{2}p_{2}+70p_{2}^{2}+107p_{1}p_{3}-25p_{4}$
\item[] $[\oM_{0,8}] = \frac{3031}{2}p_{1}^{5}-\frac{11305}{3}p_{1}^{3}p_{2}+\frac{3563}{2}p_{1}p_{2}^{2}+\frac{2765}{2}p_{1}^{2}p_{3}-602p_{2}p_{3}-385p_{1}p_{4}+\frac{461}{6}p_{5}$.
\end{itemize}
\end{exmp}

\section{The Chow ring and $K$-theory} \label{KTheoryChow}

In this section we analyze the image, using the universality of algebraic cobordism, of Theorem \ref{StringEquation} in the well-known cases of the Chow ring and $K$-theory, due to Witten \cite{Witten} in the Chow/cohomology case, and Lee's work \cite{Lee,Lee2} in the case of $K$-theory. 
\begin{rem}
Since algebraic cobordism is the universal oriented cohomology theory \cite{AlgCob}, one can obtain a string equation for psi-class intersections on $\oM_{0,n}$ taking values in any oriented cohomology theory $\text{A}^{*}$, by applying the orientation map $\L^{*} \to \text{A}^{*}(\Spec k)$ to Theorem \ref{StringEquation}.
\end{rem}

\subsection{The Chow ring}
The Chow ring $\text{CH}^{*}$ has associated formal group law $x+_{\text{CH}}y = x+y$, known as the additive formal group law, and in this case $\bar x = -x$. The universality of $\Omega^{*}$ yields a map $\L^{*} \to \text{CH}^{*}(\Spec k)$ given by $[X] \mapsto 0$ if $\text{dim}\,X > 0$. The string equation of Theorem \ref{StringEquation} in the case of the additive formal group law is
\begin{equation*}
\int_{\oM_{0,n+1}} 1 \cdot \psi^d = \sum_{i=1}^{n} \int_{\oM_{0,n}}\psi^d/\psi_i.
\end{equation*}
This recursion has the closed-form solution
\begin{equation*}
  \int_{\oM_{0,n}} \psi^d = \binom{n-3}{d_{1},\dots,d_{n}} .
\end{equation*}
We can re-write this result in terms of generating functions as
follows. Let
\begin{equation*}
  E_n = \sum_d t^d \int_{\oM_{0,n}} \psi^d
\end{equation*}
for each $n \ge 3$, where $t^d = t_1^{d_1}\dotsm t_n^{d_n}$. Then, we
have that $E_{n} = (t_{1}+\dots+t_{n})^{n-3}$.

\subsection{$K$-theory}

Let $K^{0}(X)$ be the Grothendieck group of locally free coherent sheaves on $X$, and $K^{0}(X)[\beta,\beta^{-1}] := K^{0}(X) \otimes_{\Z} \Z[\beta,\beta^{-1}]$, where $\beta$ is a formal variable of degree $-1$. We denote by $[\E]$ the class in the Grothendieck group of a locally free coherent sheaf $\E$. The formal group law associated to $K^{0}[\beta,\beta^{-1}]$ is the multiplicative formal group law $x+_{K}y = x+y-\beta xy$, and $\bar x = -x/(1-\beta x)$. In this case, if $f \colon Y \to X$ is a projective morphism of relative codimension $d$, then
\begin{equation*}
f_{\ast}([\E] \beta^{n}) = \sum_{i\ge0} (-1)^{i}[R^{i}f_{\ast}(\E)] \beta^{n-d}.
\end{equation*}
Hence, if $L \to X$ is a line bundle and $\Ll$ its sheaf of sections, then $c_{1}(L) = (1-[\Ll^{\vee}])\beta^{-1}$. The universality of $\Omega^{*}$ yields a map $\L^{*} \to K^{0}(\Spec k)[\beta,\beta^{-1}]$ given by 
\begin{equation*}
[X] \mapsto \chi_{\text{alg}}(X,\Oo_{X})\beta^{\text{dim}\,X},
\end{equation*}
where $\chi_{\text{alg}}(X,\Oo_{X}) = \sum_{i \ge 0} (-1)^{i}\text{dim}\,H^{i}(X,\Oo_{X})$ is the algebraic Euler characteristic. The string equation of Theorem \ref{StringEquation} in the case of the multiplicative formal group law is
\begin{equation*}
\int_{\oM_{0,n+1}} 1 \cdot \psi^d = \beta\int_{\oM_{0,n}} \psi^d+\sum_{i=1}^{n} \int_{\oM_{0,n}} (1-\beta\psi_{i})\psi^d/\psi_i.
\end{equation*}
However, if we consider certain twists of the psi-classes, we obtain an alternative recursion with a simple closed form solution. Namely, for each $1 \le i \le n$, let 
\begin{equation*}
\Psi_{i} = [\Ll_{i}]\psi_{i} = ([\Ll_{i}]-1)\beta^{-1},
\end{equation*}
where $\Ll_{i}$ is the sheaf of sections of the line bundle $L_{i} \to \oM_{0,n}$. 
\begin{prop} \label{KtheoryString}
For $n \ge 3$, we have 
\begin{equation*}
\int_{\oM_{0,n+1}} 1\cdot \Psi^d = \beta\int_{\oM_{0,n}} \Psi^d+\sum_{i=1}^{n} \int_{\oM_{0,n}} \Psi^d/\Psi_i.
\end{equation*}
\end{prop}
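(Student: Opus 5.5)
Rather than specializing Theorem \ref{StringEquation}, we redo its proof in $K$-theory with the twisted classes $\Psi_i$ from the start. Two facts about the multiplicative formal group law $x+_K y = x+y-\beta xy$ drive the argument. First, $c_1(L\otimes M) = (1-[\Ll^\vee][\mathcal M^\vee])\beta^{-1}$, so the twisted class $\Psi(L) := ([\Ll]-1)\beta^{-1} = -\overline{c_1(L)}$ satisfies the same law: $\Psi(L\otimes M) = \Psi(L)+_K\Psi(M)$. Second, on $\oM_{0,n+1}$, Equation \ref{ForgetfulRelation} gives $[\Ll_i] = \pi^*[\Ll_i]\cdot[\Oo(D^{\{i,n+1\}})]$. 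Hence
\[
\Psi_i = \pi^*\Psi_i + E_i - \beta\,\pi^*\Psi_i\,E_i,
\qquad E_i := ([\Oo(D_i)]-1)\beta^{-1}.
\]

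Let $f\colon D^{\{i,n+1\}}\to\oM_{0,n+1}$ be the inclusion. We use the same vanishing facts as in the proof of Theorem \ref{StringEquation}. Because $[\Oo(D_i)]-1 = [\Oo_{D_i}\otimes\Oo(D_i)] = f_*[f^*\Oo(D_i)] = f_*[L_i^\vee]$, the class $E_i$ is $f_*$ of something. Since $f^*L_i$ is trivial, this gives $\Psi_i E_i = 0$. Since the divisors $D^{\{i,n+1\}}$ are pairwise disjoint, $E_iE_j = 0$ for $i\neq j$.

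Next we expand $\Psi^d$ exactly as before. For $d_i>0$, write $\Psi_i^{d_i} = \Psi_i\cdot\Psi_i^{d_i-1}$ and use $\Psi_iE_i=0$ to replace $\Psi_i^{d_i-1}$ by $(\pi^*\Psi_i)^{d_i-1}$. Substituting the relation above for the remaining factor $\Psi_i$ and using $E_iE_j=0$ gives
\[
\Psi^d = \pi^*\Psi^d + \sum_i E_i\,(1-\beta\pi^*\Psi_i)\,\pi^*(\Psi^d/\Psi_i).
\]
Apply $\pi_*$ and the projection formula. Since $\pi\circ f = \sigma^i$ is an isomorphism onto its image, the $K$-theoretic pushforward is $\pi_*E_i = \beta^{-1}\cdot[L_i^\vee]\beta = [\Ll_i^\vee]$, where the degree bookkeeping is that $f_*$ carries the factor $\beta^{-1}$. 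Then $[\Ll_i^\vee](1-\beta\Psi_i) = [\Ll_i^\vee]\bigl(1-([\Ll_i]-1)\bigr)$. This is not obviously $1$, so at this point we recheck the sign conventions: possibly $E_i$ should be $(1-[\Oo(-D_i)])\beta^{-1}$, and the combination should simplify to exactly $1$ via $[\Ll_i^\vee][\Ll_i]=1$. Getting these normalizations consistent is a routine but careful step, and the expected outcome is that the correction term becomes $\sum_i\int\Psi^d/\Psi_i$.

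The main remaining input is $\pi_*(1)$ in $K$-theory. From Proposition \ref{CobUniversalCurve}, the fibres of $\pi$ are rational nodal curves, so $R^0\pi_*\Oo=\Oo$ and $R^1\pi_*\Oo = 0$. Hence $\pi_*(1) = [\Oo]\beta = \beta$. Equivalently, under the multiplicative law the $D^T$-correction terms of Proposition \ref{CobUniversalCurve} must vanish, and we check this directly: with $x+_Ky$ and $\bar x = -x/(1-\beta x)$, the series $1/(x(\bar x+\bar y)) - 1/((\bar x+\bar y)\bar y) + 1/(\bar x\bar y)$ is identically $0$. Combining $\int_{\oM_{0,n}}\pi_*(1)\,\Psi^d = \beta\int_{\oM_{0,n}}\Psi^d$ with the correction term gives the proposition. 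I expect the main obstacle to be the twist bookkeeping in the correction term, namely showing that $\pi_*\bigl(E_i(1-\beta\pi^*\Psi_i)\bigr)=1$.
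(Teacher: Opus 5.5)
Your strategy is the paper's, and the surrounding steps are right. Your $E_i=([\Oo(D^{\{i,n+1\}})]-1)\beta^{-1}$ is exactly the paper's $\tilde D_i=[\Oo(D^{\{i,n+1\}})]D_i$. The vanishings $\Psi_iE_i=0$ and $E_iE_j=0$, the pushforward $\pi_*E_i=[\Ll_i^\vee]$, and $\pi_*(1)=\beta$ all match the paper; your check that the $D^T$-series vanishes identically for the multiplicative law is also correct.

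The argument does not close as written, and the cause is a sign error, not the normalization of $E_i$. The claim $\Psi(L\otimes M)=\Psi(L)+_K\Psi(M)$ is false. Since $\Psi(L)=-\overline{c_1(L)}$ and $\overline{c_1(L\otimes M)}=\overline{c_1(L)}+_K\overline{c_1(M)}$, negation turns $x+y-\beta xy$ into $x+y+\beta xy$. Directly, with $A=[\Ll]$ and $B=[\mathcal M]$,
\[
(AB-1)\beta^{-1}=(A-1)\beta^{-1}+(B-1)\beta^{-1}+\beta\,(A-1)\beta^{-1}(B-1)\beta^{-1}.
\]
So the correct relation is $\Psi_i=\pi^*\Psi_i+E_i+\beta\,\pi^*\Psi_i\,E_i$. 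This is the paper's $\Psi_i=\pi^*\Psi_i+_{\tilde K}\tilde D_i$ with $x+_{\tilde K}y=x+y+\beta xy$. The correction term then becomes
\[
\pi_*\bigl(E_i(1+\beta\pi^*\Psi_i)\bigr)=[\Ll_i^\vee](1+\beta\Psi_i)=[\Ll_i^\vee][\Ll_i]=1,
\]
which gives the proposition.

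With your sign you get $[\Ll_i^\vee](2-[\Ll_i])=2[\Ll_i^\vee]-1$, which is wrong. The remedy you suggest would not repair it. If you replace $E_i$ by $(1-[\Oo(-D_i)])\beta^{-1}=c_1(\Oo(D^{\{i,n+1\}}))$, your displayed relation is still false, and $\pi_*E_i=1$ then gives a correction term $1-\beta\Psi_i$, not $1$. This cancellation is exactly what separates the twisted recursion from the untwisted $K$-theoretic one. Deferring it as routine, with an incorrect diagnosis, leaves a genuine gap, although correcting the sign in the group law for $\Psi$ closes it in one line.
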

\begin{proof}
The structure of the proof is the same as that of Theorem \ref{StringEquation}. However, if we again let $D_{i} = c_{1}(\Oo_{\oM_{0,n+1}}(D^{\{i,n+1\}})$), and then define $\tilde{D}_{i} = [\Oo(D^{\{i,n+1\}})]D_{i}$, we now have that
\begin{equation*}
\Psi_{i} = \pi^{*}\Psi_{i}+_{\tilde{K}}\tilde{D}_{i},
\end{equation*}
where $x+_{\tilde{K}}y = x+y+\beta xy$. Note that $\tilde{D}_{i}\tilde{D}_{j} = 0$ and $\Psi_{i}\tilde{D}_{i} = 0$ for $i \neq j$ since $D_{i}D_{j} = 0$ and $\psi_{i}D_{i} = 0$ as in the proof of Theorem \ref{StringEquation}, so we similarly get 
\begin{equation*}
\int_{\oM_{0,n+1}} 1\cdot \Psi^d = \int_{\oM_{0,n}} \pi_{*}(1)\Psi^d+\sum_{i=1}^{n} \int_{\oM_{0,n}} \left(\pi_{*}(\tilde{D}_{i})+\beta\Psi_{i}\pi_{*}(\tilde{D}_{i})\right)\Psi^d/\Psi_i.
\end{equation*}
We know that $\pi_{*}(1) = \beta$ by Proposition \ref{CobUniversalCurve}. Letting $f \colon D^{\{i,n+1\}} \to \oM_{0,n+1}$ be the inclusion, we have
\begin{equation*}
\pi_{*}(\tilde{D}_{i}) = \pi_{*}([\Oo(D^{\{i,n+1\}})]f_{*}(1)) = (\pi_{*} \circ f_{*})([f^{*}\Oo(D^{\{i,n+1\}})]) = [\Ll_{i}^{\vee}].
\end{equation*}
However, note that
\begin{equation*}
\Psi_{i}+_{\tilde{K}}(([\Ll_{i}^\vee]-1)\beta^{-1}) = 0,
\end{equation*}
which implies 
\begin{equation*}
([\Ll_{i}^{\vee}]-1)\beta^{-1} = -\frac{\Psi_{i}}{1+\beta\Psi_{i}} = \sum_{j \ge 1} (-1)^{j}\beta^{j-1}\Psi_{i}^{j},
\end{equation*}
and hence
\begin{equation*}
\pi_{*}(\tilde{D}_{i}) = 1+\sum_{j \ge 1}(-1)^{j}\beta^{j}\Psi_{i}^{j}.
\end{equation*}
Therefore, it follows that
\begin{align*}
  \int_{\oM_{0,n+1}} 1\cdot \Psi^d
&= \beta\int_{\oM_{0,n}} \Psi^d+\sum_{i=1}^{n}\biggl(1+\sum_{j \ge 1}(-1)^{j}\beta^{j}\Psi_{i}^{j}+\beta\Psi_{i}+\sum_{j \ge 1}(-1)^{j}\beta^{j+1}\Psi_{i}^{j+1}\biggr)\Psi^d/\Psi_i \\
&= \beta\int_{\oM_{0,n}} \Psi^d+\sum_{i=1}^{n} \int_{\oM_{0,n}} \Psi^d/\Psi_i. 
  \tag*{\qedhere}
\end{align*}
\end{proof}

For each $n \ge 3$, define the generating function
\begin{equation*}
E_{n} = \sum_d t^d \int_{\oM_{0,n}} \Psi^d .
\end{equation*}
\begin{cor}
We have that
\begin{equation*}
\int_{\oM_{0,n}} \Psi^d = \beta^{n-3-|d|} \binom{n-3}{n-3-|d|,d_{1},\dots,d_{n}},
\end{equation*}
and hence $E_{n} = (\beta+t_{1}+\dots+t_{n})^{n-3}$. 
\end{cor}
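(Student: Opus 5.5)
The plan is induction on $n$, using the recursion of Proposition~\ref{KtheoryString} together with the dimension vanishing $\Omega^{i}(X)=0$ for $i>\dim X$. Define the candidate values $N_n(d) = \beta^{n-3-|d|}\binom{n-3}{n-3-|d|,d_1,\dots,d_n}$ for $|d|\le n-3$, and $N_n(d)=0$ otherwise. We check three things.

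First, the base case. On $\oM_{0,3}$ we have $\int 1 = \chi(\text{pt})\beta^0 = 1$. Any $\Psi^d$ with $|d|>0$ vanishes, since $\Psi_i$ has positive degree on a point. This agrees with $N_3$.

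Second, vanishing in high degree. Here the twisted classes need a little care. Since $\Psi_i = [\Ll_i]\psi_i$ and $[\Ll_i]$ has degree $0$, the class $\Psi^d$ lies in $\Omega^{|d|}$. So it vanishes when $|d|>n-3$, matching the convention for $N_n$. Also $\int_{\oM_{0,n}}\Psi^d$ lies in degree $|d|-(n-3)$ of $\Z[\beta,\beta^{-1}]$, so it is an integer multiple of $\beta^{n-3-|d|}$. This is consistent with the shape of the formula.

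Third, the inductive step. Suppose $|d|\le n-2$ on $\oM_{0,n+1}$. Then $n+1$ indices carry total weight at most $n-2$, so some $d_j=0$. By the $S_{n+1}$-symmetry of $\oM_{0,n+1}$, which permutes the $\Psi_i$, we may relabel so that $d_{n+1}=0$. Proposition~\ref{KtheoryString} then gives
\begin{equation*}
\int_{\oM_{0,n+1}}\Psi^d = \beta N_n(d) + \sum_{i:\,d_i>0} N_n(d-e_i).
\end{equation*}
It remains to verify the multinomial identity: with $m=n-3$ and $d_0 := m-|d|$, we need $\binom{m+1}{d_0+1,d_1,\dots,d_n}$ to equal the right-hand side. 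This is Pascal's rule for multinomial coefficients, $\binom{m+1}{k_0,\dots,k_n} = \sum_j \binom{m}{k-e_j}$, applied with $k_0 = d_0+1$.

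The $j=0$ term is $\binom{m}{d_0,d}$, which gives $\beta N_n(d)$. The $j\ge1$ terms give $\binom{m}{d_0+1,d-e_j}$. The power of $\beta$ also matches, since $n-2-|d| = n-3-|d|+1 = n-3-(|d|-1)$. Boundary cases, where $|d|=n-2$ or some $d_i=0$, are handled by the vanishing conventions. Since these values determine everything, the closed formula follows.

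Finally, the generating function follows from the multinomial theorem. We have $(\beta+\sum t_i)^{n-3} = \sum \binom{n-3}{d_0,d}\beta^{d_0}t^d$, and this is exactly $E_n$.

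I expect the main obstacle to be bookkeeping rather than a conceptual step. The points to be careful about are confirming that $\Psi_i$ has degree $1$ so that dimension vanishing applies, and making sure the symmetry argument legitimately lets us put the zero exponent in the last slot.
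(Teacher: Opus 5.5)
Your proposal is correct and follows the paper's proof: induction on $n$ via Proposition~\ref{KtheoryString}, using the vanishing $\Psi^d=0$ for $|d|>n-3$ and the initial condition. You also supply the multinomial Pascal identity and the symmetry relabeling that the paper leaves implicit.

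One small precision concerns the vanishing step. It does not follow from degree considerations inside $K^{0}[\beta,\beta^{-1}]$, whose degree-$|d|$ part $K^{0}(\oM_{0,n})\beta^{-|d|}$ is nonzero. Instead, as in the paper, write $\Psi^d=\bigl(\prod_i[\Ll_i]^{d_i}\bigr)\psi^d$, where $\psi^d$ is the image of a class in $\Omega^{|d|}(\oM_{0,n})=0$.
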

\begin{proof}
Since $\psi^d = 0$ for $|d| > n-3$, it follows that $\Psi^d = 0$ for $|d| > n-3$. Together with the initial condition $\int_{\oM_{0,3}} 1 = 1$, the result follows from Proposition \ref{KtheoryString} by induction. 
\end{proof}
\begin{rem}
Observe that more generally in $\Omega^{*}$, if $|d| = n-3$, then
\begin{equation*}
  \int_{\oM_{0,n}} \psi^d = \binom{n-3}{d_{1},\dots,d_{n}},
\end{equation*}
which follows since all terms in the cobordism-valued string equation that do not appear in the Chow ring string equation vanish in this case. This phenomenon can be seen in the tables in Section \ref{Tables}.
\end{rem}

\appendix

\section{Formal group law calculations} \label{Appendix}

In this section we perform some low-degree calculations with the universal formal group law $x+_{\Omega}y \in \L^{*}[[x,y]]$. Recall that we define $q(x,y)$ to be the unique power series satisfying $x+_{\Omega}y = x+y-xyq(x,y)$, and we define $\varphi(x) = q(x,\bar x)$. We now give a combinatorial proof that the expression appearing in the statement of Proposition \ref{CobUniversalCurve} (and Theorem \ref{StringEquation}) is indeed a power series. 
\begin{prop} \label{Expression}
We have that
\begin{align*}
&\frac{1}{x(\bar x+_{\Omega}\bar y)}-\frac{1}{(\bar x +_{\Omega} \bar y)\bar y}+\frac{1}{\bar x \bar y} \\ 
&\quad= \frac{1}{x}(\varphi(x+_{\Omega}y)-\varphi(y))+\biggl(\frac{1}{y}-\varphi(y)\biggr)(\varphi(x+_{\Omega}y)-\varphi(x))+\frac{1}{x+_{\Omega}y}(\varphi(y)-q(x,y)) \\
\end{align*}
is a power series with coefficients in the Lazard ring $\L^{*}$. 
\end{prop}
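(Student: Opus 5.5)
The plan is to prove the displayed identity first, by direct algebra with the relation \eqref{VarphiDef}. Then I would show that each of the three terms on the right-hand side is separately a power series over $\L^{*}$. Throughout, write $s = x+_{\Omega}y$.

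\emph{The identity.} The key observation is that $\bar x+_{\Omega}\bar y = \bar s$, since inversion is a group homomorphism for a commutative formal group law. Applying \eqref{VarphiDef} to $s$, $x$ and $y$ gives
\begin{equation*}
\frac{1}{\bar s} = \varphi(s)-\frac{1}{s}, \qquad \frac{1}{\bar x} = \varphi(x)-\frac{1}{x}, \qquad \frac{1}{\bar y} = \varphi(y)-\frac{1}{y}.
\end{equation*}
Substituting these into the left-hand side expresses it as a Laurent-type expression in $x$, $y$, $s$ and the series $\varphi(x)$, $\varphi(y)$, $\varphi(s)$.

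I would expand both sides and compare. All terms involving products or quotients of $\varphi$'s match between the two sides. After cancellation, the difference LHS $-$ RHS reduces to
\begin{equation*}
-\frac{1}{xs}-\frac{1}{ys}+\frac{1}{xy}+\frac{q(x,y)}{s}.
\end{equation*}
Multiplying by $xys$ turns this into $-y-x+s+xyq(x,y)$. This vanishes by the definition $s = x+y-xyq(x,y)$, which proves the identity.

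\emph{Integrality.} I would treat the three terms on the right-hand side one at a time, using the following divisibility fact: for $g\in\L^{*}[[x,y]]$ and a power series $h$ with zero constant term, $g(x,y)-g(h,y)$ is divisible by $x-h$ in $\L^{*}[[x,y]]$. This holds monomial by monomial, because $x^{m}-h^{m}$ is divisible by $x-h$.

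\begin{itemize}
\item[] \emph{First term.} Since $s\equiv y \pmod x$, the difference $\varphi(s)-\varphi(y)$ is divisible by $x$. Hence $\frac{1}{x}(\varphi(s)-\varphi(y))$ is a power series.
\item[] \emph{Second term.} By symmetry of $s$ in $x$ and $y$, the difference $\varphi(s)-\varphi(x)$ is divisible by $y$. Expanding, $\bigl(\frac{1}{y}-\varphi(y)\bigr)(\varphi(s)-\varphi(x))$ equals $\frac{1}{y}(\varphi(s)-\varphi(x))-\varphi(y)(\varphi(s)-\varphi(x))$, and both pieces are power series.
\item[] \emph{Third term.} This is the main obstacle, since $\frac{1}{s}$ is not a monomial denominator. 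I would first show that $s = (x-\bar y)\,u(x,y)$ with $u$ a unit in $\L^{*}[[x,y]]$. As a series in $x$, $s$ vanishes at $x=\bar y$, so the divisibility fact gives $s = (x-\bar y)u$ for some power series $u$. Moreover $u\equiv 1$ modulo $(x,y)$, because $\partial s/\partial x$ has constant term $1$; so $u$ is a unit. Next, set $g(x,y) = \varphi(y)-q(x,y)$. We have $g(\bar y,y) = \varphi(y)-q(\bar y,y) = 0$, using the symmetry of $q$ and the definition $\varphi(y) = q(y,\bar y)$. By the divisibility fact, $g$ is divisible by $x-\bar y$, hence by $s$. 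So the third term $\frac{1}{s}\,g(x,y)$ is a power series.
\end{itemize}

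Since all three terms lie in $\L^{*}[[x,y]]$, so does their sum, which is the left-hand side.
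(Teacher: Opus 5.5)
Your proposal is correct and follows essentially the same route as the paper. Both substitute $1/\bar s=\varphi(s)-1/s$ (and likewise for $x,y$), cancel the leftover terms using $s=x+y-xyq(x,y)$, and then check divisibility term by term; the third term is handled by showing the numerator vanishes on the locus $x=\bar y$ and that $s$ is a unit multiple of $x-\bar y$. The only cosmetic difference is that the paper writes this unit explicitly from $y=s+_{\Omega}\bar x$, namely $\bar x-y=s(-1+\bar x q(s,\bar x))$, and treats the mirror numerator $\varphi(x)-q(x,y)$, whereas you get the unit from the factor theorem and a constant-term check.
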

\begin{proof}
First, using Equation \ref{VarphiDef}, we have that
\begin{align*}
&\frac{1}{x(\bar x+_{\Omega}\bar y)}-\frac{1}{(\bar x +_{\Omega} \bar y)\bar y}+\frac{1}{\bar x \bar y} \\
&\quad= \left(\varphi(x+_{\Omega}y)-\frac{1}{x+_{\Omega}y}\right)\left(\frac{1}{x}-\varphi(y)+\frac{1}{y}\right)+\left(\varphi(x)-\frac{1}{x}\right)\left(\varphi(y)-\frac{1}{y}\right) \\
&\quad= \frac{1}{x}(\varphi(x+_{\Omega}y)-\varphi(y))+\biggl(\frac{1}{y}-\varphi(y)\biggr)(\varphi(x+_{\Omega}y)-\varphi(x))+\frac{\varphi(y)}{x+_{\Omega}y}+\frac{1}{xy}-\frac{1}{x(x+_{\Omega}y)}-\frac{1}{y(x+_{\Omega}y)}.
\end{align*}
The sum of the last three terms is equal to
\begin{equation*}
\frac{(x+_{\Omega}y)-x-y}{xy(x+_{\Omega}y)} = -\frac{q(x,y)}{x+_{\Omega}y},
\end{equation*}
and hence
\begin{align*}
&\frac{1}{x(\bar x+_{\Omega}\bar y)}-\frac{1}{(\bar x +_{\Omega} \bar y)\bar y}+\frac{1}{\bar x \bar y} \\ 
&\quad= \frac{1}{x}(\varphi(x+_{\Omega}y)-\varphi(y))+\biggl(\frac{1}{y}-\varphi(y)\biggr)(\varphi(x+_{\Omega}y)-\varphi(x))+\frac{1}{x+_{\Omega}y}(\varphi(y)-q(x,y)). \\
\end{align*}
Observe that $\varphi(x+_{\Omega}y)-\varphi(y)$ is divisible by $x$. Indeed, we know that it is divisible by $(x+_{\Omega}y)-y$, but $(x+_{\Omega}y)-y = x(1-yq(x,y))$. Similarly, we have that $\varphi(x+_{\Omega}y)-\varphi(x)$ is divisible by $y$. To see that $\varphi(x)-q(x,y)$ is divisible by $x+_{\Omega}y$, note that 
\begin{equation*}
\varphi(x)-q(x,y) = q(x,\bar x)-q(x,y)
\end{equation*}
is divisible by $\bar x-y$, but since $y = (x+_{\Omega}y)+_{\Omega} \bar x$, we have that 
\begin{equation*}
  \bar x-y = (x+_{\Omega}y)(-1+\bar xq(x+_{\Omega}y,\bar x)).
  \qedhere
\end{equation*}
\end{proof}
We now give the following presentation \cite{Hazewinkel} of the
coefficients of the universal formal group law $x+_{\Omega}y$ up to
homogenous degree 6. See Stapleton's website \cite{Stapleton} for
significant further computations of the coefficients of $x+_{\Omega}y$.
\begin{align}
\notag
x+_{\Omega}y &= x+y-u_{1}xy+(u_{1}^{2}-u_{2})(x^{2}y+xy^{2}) \\
\notag
&\quad+(-2u_{1}^{3}+2u_{1}u_{2}-2u_{3})(x^{3}y+xy^{3})+(-4u_{1}^{3}+4u_{1}u_{2}-3u_{3})x^{2}y^{2} \\
\notag
&\quad+(3u_{1}^{4}-3u_{1}^{2}u_{2}+u_{2}^{2}+4u_{1}u_{3}-u_{4})(x^{4}y+xy^{4}) \\
\notag
&\quad+(10u_{1}^{4}-11u_{1}^{2}u_{2}+3u_{2}^{2}+11u_{1}u_{3}-2u_{4})(x^{3}y^{2}+x^{2}y^{3}) \\
\notag
&\quad+(-4u_{1}^{5}+2u_{1}^{3}u_{2}-6u_{1}u_{2}^{2}-6u_{1}^{2}u_{3}+4u_{2}u_{3}+2u_{1}u_{4}-6u_{5})(x^{5}y+xy^{5})\\
\notag
&\quad+(-21u_{1}^{5}+21u_{1}^{3}u_{2}-22u_{1}u_{2}^{2}-28u_{1}^{2}u_{3}+15u_{2}u_{3}+7u_{1}u_{4}-15u_{5})(x^{4}y^{2}+x^{2}y^{4})\\
\label{UnivFGL}
&\quad+(-34u_{1}^{5}+37u_{1}^{3}u_{2}-33u_{1}u_{2}^{2}-43u_{1}^{2}u_{3}+22u_{2}u_{3}+10u_{1}u_{4}-20u_{5})x^{3}y^{3}+\dots
\end{align}
It follows that
\begin{align*}
q(x,y) &= u_{1}-(u_{1}^{2}-u_{2})(x+y) -(-2u_{1}^{3}+2u_{1}u_{2}-2u_{3})(x^{2}+y^{2})-(-4u_{1}^{3}+4u_{1}u_{2}-3u_{3})xy \\
&\quad-(3u_{1}^{4}-3u_{1}^{2}u_{2}+u_{2}^{2}+4u_{1}u_{3}-u_{4})(x^{3}+y^{3}) \\
&\quad-(10u_{1}^{4}-11u_{1}^{2}u_{2}+3u_{2}^{2}+11u_{1}u_{3}-2u_{4})(x^{2}y+xy^{2}) \\
&\quad-(-4u_{1}^{5}+2u_{1}^{3}u_{2}-6u_{1}u_{2}^{2}-6u_{1}^{2}u_{3}+4u_{2}u_{3}+2u_{1}u_{4}-6u_{5})(x^{4}+y^{4})\\
&\quad-(-21u_{1}^{5}+21u_{1}^{3}u_{2}-22u_{1}u_{2}^{2}-28u_{1}^{2}u_{3}+15u_{2}u_{3}+7u_{1}u_{4}-15u_{5})(x^{3}y+xy^{3})\\
&\quad-(-34u_{1}^{5}+37u_{1}^{3}u_{2}-33u_{1}u_{2}^{2}-43u_{1}^{2}u_{3}+22u_{2}u_{3}+10u_{1}u_{4}-20u_{5})x^{2}y^{2}+\dots
\end{align*}
Next, we have that
\begin{equation*}
\bar x = -x-u_{1}x^{2}-u_{1}^{2}x^{3}+(-2u_{1}^{3}+u_{1}u_{2}-u_{3})x^{4}+\dots
\end{equation*}
which implies that
\begin{align}
\notag
\varphi(x) = q(x,\bar x) &= u_{1}+(u_{1}^{3}-u_{1}u_{2}+u_{3})x^{2}+(u_{1}^{4}-u_{1}^{2}u_{2}+u_{1}u_{3})x^{3} \\
\label{varphi}
&\quad+(3u_{1}^{5}-2u_{1}^{3}u_{2}+2u_{1}u_{2}^{2}+4u_{1}^{2}u_{3}-u_{2}u_{3}-u_{1}u_{4}+2u_{5})x^{4}+\dots
\end{align}
and
\begin{align}
\notag
\frac{x}{\bar x} &= -1+x\varphi(x) \\
\notag
&=-1+u_{1}x+(u_{1}^{3}-u_{1}u_{2}+u_{3})x^{3}+(u_{1}^{4}-u_{1}^{2}u_{2}+u_{1}u_{3})x^{4}\\
\label{x/chi(x)}
&\quad+(3u_{1}^{5}-2u_{1}^{3}u_{2}+2u_{1}u_{2}^{2}+4u_{1}^{2}u_{3}-u_{2}u_{3}-u_{1}u_{4}+2u_{5})x^{5}+\dots
\end{align}
We now calculate the power series of Proposition \ref{Expression} up to order 3. First, we have that
\begin{align*}
\varphi(x+_{\Omega}y) &= u_{1}+(u_{1}^{3}-u_{1}u_{2}+u_{3})(x^{2}+2xy+y^{2})+(u_{1}^{4}-u_{1}^{2}u_{2}+u_{1}u_{3})(x^{3}+x^{2}y+xy^{2}+y^{3}) \\
&\quad+(3u_{1}^{5}-2u_{1}^{3}u_{2}+2u_{1}u_{2}^{2}+4u_{1}^{2}u_{3}-u_{2}u_{3}-u_{1}u_{4}+2u_{5})(x^{4}+y^{4}) \\
&\quad+(11u_{1}^{5}-9u_{1}^{3}u_{2}+10u_{1}u_{2}^{2}+15u_{1}^{2}u_{3}-6u_{2}u_{3}-4u_{1}u_{4}+8u_{5})(x^{3}y+xy^{3}) \\
&\quad+(17u_{1}^{5}-15u_{1}^{3}u_{2}+16u_{1}u_{2}^{2}+23u_{1}^{2}u_{3}-10u_{2}u_{3}-6u_{1}u_{4}+12u_{5})x^{2}y^{2}+\dots
\end{align*}
which implies, by Proposition \ref{Expression}, that
\begin{align}
\notag
&\frac{1}{x(\bar x+_{\Omega}\bar y)}-\frac{1}{(\bar x +_{\Omega} \bar y)\bar y}+\frac{1}{\bar x \bar y} \\
\notag
&\quad= \frac{1}{x}(\varphi(x+_{\Omega}y)-\varphi(y))+\biggl(\frac{1}{y}-\varphi(y)\biggr)(\varphi(x+_{\Omega}y)-\varphi(x))+\frac{1}{x+_{\Omega}y}(\varphi(y)-q(x,y)) \\
\notag
&\quad=(u_{1}^{2}-u_{2})+(u_{1}^{3}-u_{1}u_{2}+u_{3})(x+2y) \\
\notag
&\quad\quad+(5u_{1}^{4}-5u_{1}^{2}u_{2}+u_{2}^{2}+6u_{1}u_{3}-u_{4})(x^{2}+y^{2}) +(4u_{1}^{4}-4u_{1}^{2}u_{2}+u_{2}^{2}+5u_{1}u_{3}-u_{4})xy \\
\notag
&\quad\quad+(10u_{1}^{5}-9u_{1}^{3}u_{2}+6u_{1}u_{2}^{2}+13u_{1}^{2}u_{3}-3u_{2}u_{3}-3u_{1}u_{4}+4u_{5})x^{3} \\
\notag
&\quad\quad+(17u_{1}^{5}-15u_{1}^{3}u_{2}+15u_{1}u_{2}^{2}+23u_{1}^{2}u_{3}-9u_{2}u_{3}-6u_{1}u_{4}+11u_{5})x^{2}y \\
\notag
&\quad\quad+(14u_{1}^{5}-12u_{1}^{3}u_{2}+12u_{1}u_{2}^{2}+19u_{1}^{2}u_{3}-7u_{2}u_{3}-5u_{1}u_{4}+9u_{5})xy^{2} \\
\label{R(x,y)}
&\quad\quad+(12u_{1}^{5}-10u_{1}^{3}u_{2}+8u_{1}u_{2}^{2}+16u_{1}^{2}u_{3}-4u_{2}u_{3}-4u_{1}u_{4}+6u_{5})y^{3} +\dots
\end{align}

\begin{rem} \label{AppendixRemark}
When calculating the cobordism classes $[\P_{\P^{2}}(\Oo(1) \oplus \Oo \oplus \Oo)]$ and $[\P_{\P^{3}}(\Oo(1) \oplus \Oo \oplus \Oo)]$ to find expressions for the generators $u_{4}$ and $u_{5}$ as in Example \ref{LowDegGen}, we use that after setting $y = z = 0$ in the power series
\begin{equation*}
\frac{1}{(y+_{\Omega}\bar x)(z+_{\Omega}\bar x)}+\frac{1}{(x+_{\Omega}\bar y)(z+_{\Omega}\bar y)}+\frac{1}{(x+_{\Omega}\bar z)(y+_{\Omega}\bar z)},
\end{equation*}
we get the power series in one variable
\begin{align*}
&\varphi(x)^{2}-\frac{1}{x}(2\varphi(x)-q(x,0)-u_{1}) \\
&\quad= u_{2}+(-3u_{1}^{4}+3u_{1}^{2}u_{2}-u_{2}^{2}-4u_{1}u_{3}+u_{4})x^{2}+(2u_{1}u_{2}^{2}-2u_{2}u_{3}+2u_{5})x^{3}+\dots
\end{align*}
\end{rem}

\subsection{Tables} \label{Tables} Using Theorem \ref{StringEquation},
we calculate all cobordism-valued psi-class intersections
$\int_{\oM_{0,n}} \psi^d$ for $n \le 8$, given in the following tables, which
we organize by the total degree $|d|$. Note that we do not include
those intersections which can be obtained from those in the tables by
permuting the markings.

\renewcommand{\thetable}{A.0}
\begin{table}[H] 
\caption[$|d| = 0$]{$|d| = 0$}
\label{Table0}
\begin{center}
{\renewcommand{\arraystretch}{1.2}
\begin{tabular}{|c|c|}
         \hline
         $n$
         & 1
         \\ \hline
         3
         &
         1
         \\ 
         4 
         &
         $u_1$
         \\ 
         5 
         &
         $4u_1^2 - 3u_2$
         \\ 
         6 
         &
         $31u_1^3 - 30u_1u_2+17u_3$
         \\ 
         7 
         &
         $273u_1^4 - 317u_1^2u_2+70u_2^2+214u_1u_3 - 25u_4$
         \\ 
         8 
         &
         $2898u_{1}^{5}-4063u_{1}^{3}u_{2}+2012u_{1}u_{2}^{2}+2765u_{1}^{2}u_{3}-1204u_{2}u_{3}-385u_{1}u_{4}+461u_{5}$
         \\ \hline
\end{tabular} }
\end{center}
\end{table}

\renewcommand{\thetable}{A.1}
\begin{table}[H] 
\caption[$|d| = 1$]{$|d| = 1$}
\label{Table1}
\begin{center}
{\renewcommand{\arraystretch}{1.2}
\begin{tabular}{|c|c|}
         \hline
         $n$
         & $\psi_1$
         \\ \hline
         4 
         &
         1
         \\ 
         5 
         &
         $u_1$
         \\ 
         6 
         &
         $10u_1^2 - 9u_2$
         \\ 
         7 
         &
         $101u_1^3 - 100u_1u_2+67u_3$
         \\ 
         8 
         &
         $1078u_{1}^{4}-1302u_{1}^{2}u_{2}+350u_{2}^{2}+889u_{1}u_{3}-125u_{4}$
         \\ \hline
\end{tabular} }
\end{center}
\end{table}

\renewcommand{\thetable}{A.2}
\begin{table}[H] 
\caption[$|d| = 2$]{$|d| = 2$}
\label{Table2}
\begin{center}
{\renewcommand{\arraystretch}{1.2}
\begin{tabular}{|c|c|c|}
         \hline
         $n$
         & $\psi_1^2$
         & $\psi_1\psi_2$
         \\ \hline
         5 
         &
         1
         &
         2
         \\ 
         6 
         &
         $u_1$
         &
         0
         \\ 
         7 
         &
         $20u_1^2 - 19u_2$
         &
         $38u_1^2 - 36u_2$
         \\ 
         8 
         &
         $246u_{1}^{3}-245u_{1}u_{2}+177u_{3}$
         &
         $400u_{1}^{3}-400u_{1}u_{2}+330u_{3}$
         \\ \hline
\end{tabular} }
\end{center}
\end{table}

\renewcommand{\thetable}{A.3}
\begin{table}[H] 
\caption[$|d| = 3$]{$|d| = 3$}
\label{Table3}
\begin{center}
{\renewcommand{\arraystretch}{1.2}
\begin{tabular}{|c|c|c|c|}
         \hline
         $n$
         & $\psi_1^3$
         & $\psi_1^2\psi_2$
         & $\psi_1\psi_2\psi_3$
         \\ \hline
         6 
         &
         1
         &
         3
         &
         6
         \\ 
         7 
         &
         $u_1$
         &
         $-2u_1$
         &
         $-12u_1$
         \\ 
         8 
         &
         $35u_1^2-34u_2$
         &
         $100u_1^2-95u_2$
         &
         $210u_1^2-180u_2$
         \\ \hline
\end{tabular} }
\end{center}
\end{table}

\renewcommand{\thetable}{A.4}
\begin{table}[H] 
\caption[$|d| = 4$]{$|d| = 4$}
\label{Table4}
\begin{center}
{\renewcommand{\arraystretch}{1.2}
\begin{tabular}{|c|c|c|c|c|c|}
         \hline
         $n$
         & $\psi_1^4$
         & $\psi_1^2\psi_2\psi_3$
         & $\psi_1^2\psi_2^2$
         & $\psi_1\psi_2^3$
         & $\psi_1\psi_2\psi_3\psi_4$
         \\ \hline
         7 
         &
         1
         &
         12
         &
         6
         &
         4
         &
         24
         \\ 
         8 
         &
         $u_1$
         &
         $-40u_1$
         &
         $-10u_1$
         &
         $-5u_1$
         &
         $-120u_1$
         \\ \hline
\end{tabular} }
\end{center}
\end{table}

\renewcommand{\thetable}{A.5}
\begin{table}[H] 
\caption[$|d| = 5$]{$|d| = 5$}
\label{Table5}
\begin{center}
{\renewcommand{\arraystretch}{1.2}
\begin{tabular}{|c|c|c|c|c|c|c|c|}
         \hline
         $n$
         & $\psi_1^5$
         & $\psi_1^3\psi_2\psi_3$
         & $\psi_1\psi_2^2\psi_3^2$
         & $\psi_1^2\psi_2\psi_3\psi_4$
         & $\psi_1^2\psi_2^3$
         & $\psi_1\psi_2^4$
         & $\psi_1\psi_2\psi_3\psi_4\psi_5$
         \\ \hline
         8 
         &
         1
         &
         20
         &
         30
         &
         60
         &
         10
         &
         5
         &
         120
         \\ \hline
\end{tabular} }
\end{center}
\end{table}

\FloatBarrier

\end{document}